\newtheorem{theorem}{Theorem}[section]
\newtheorem{lemma}[theorem]{Lemma}
\newtheorem{corollary}[theorem]{Corollary}
\newtheorem{proposition}[theorem]{Proposition}
\theoremstyle{definition}
\newtheorem{definition}[theorem]{Definition}
\newtheorem{remark}[theorem]{Remark}
\newtheorem{example}[theorem]{Example}
\newcommand{\N}{\mathbb{N}}
\newcommand{\R}{\mathbb{R}}
\newcommand{\Rn}{\R^n}
\newcommand{\A}{\mathcal{A}}
\newcommand{\B}{\mathcal{B}}
\newcommand{\M}{\mathcal{M}}
\newcommand{\Lag}{\mathcal{L}}
\newcommand{\LB}{\mathfrak{L}}
\newcommand{\Bor}{\mathfrak{B}}
\newcommand{\Om}{\Omega}
\newcommand{\la}{\langle}
\newcommand{\ra}{\rangle}
\newcommand{\loc}{\textnormal{loc}}
\DeclareMathOperator{\Curv}{Curv}
\DeclareMathOperator{\Mod}{Mod}
\numberwithin{equation}{section}
   \def\MR#1{}
\begin{document}
\title{Duality for the gradient of a $p$-harmonic function and the existence of gradient curves}
\author{Sylvester Eriksson-Bique}
\address[Sylvester Eriksson-Bique]{Department of Mathematics and Statistics, University of
Jyv\"askyl\"a, PO~Box~35, FI-40014 Jyv\"askyl\"a, Finland}
\email{sylvester.d.eriksson-bique@jyu.fi}
\author{Saara Sarsa}
\address[Saara Sarsa]{Department of Mathematics and Statistics, University of
Jyv\"askyl\"a, PO~Box~35, FI-40014 Jyv\"askyl\"a, Finland}
\email{saara.m.sarsa@jyu.fi}

\begin{abstract}
Every convex optimization problem has a dual problem. The $p$-Dirichlet problem in metric measure spaces is an optimization problem whose solutions are $p$-harmonic functions. What is its dual problem? In this paper, we give an answer to this problem in the following form. We give a generalized modulus problem whose solution is the gradient of the $p$-harmonic function for metric measure spaces. Its dual problem is an optimization problem for measures on curves and we show exact duality and the existence of minimizers for this dual problem under appropriate assumptions. When applied to $p$-harmonic functions the minimizers of this dual problem are supported on gradient curves, yielding a natural concept associated to such functions that has yet to be studied. This process defines a natural dual metric current and proves the existence of gradient curves. These insights are then used to construct a counter example answering the old ``sheaf problem'' on metric spaces: in contrast to Euclidean spaces, in general metric spaces being $p$-harmonic is not strictly speaking a local property. 
\end{abstract}
\thanks{The first author was supported by Research Council of Finland grants 354241 and 356861. The second author was supported by Eemil Aaltonen Foundation and Jenny and Arttu Wihuri foundation through the research group ``Quasiworld network'', as well as by the Research Council of Finland grant 354241. We thank Nageswari Shanmugalingam, Xiaodan Zhou, Qiu Ling, Anders Björn and Jana Björn for helpful discussions. The failure of the sheaf property was related closely to a separate project of the first author with Anders Björn, Jana Björn and Xiaodan Zhou. Part of this work was done while the first author visited Okinawa Institute of Science and Technology as part of the TSVP program.}
\subjclass[2020]{31E05,35D30,35J60,30L99,49N15}
\keywords{$p$-harmonic functions, duality, gradient curves, integral currents, sheaf property, potential theory, modulus of curve families}
\maketitle
\tableofcontents
%------------------------------
\section{Introduction}

\subsection{Outline}
In this article we study a natural dual problem associated to $p$-harmonic functions and the $p$-Dirichlet problem. Our motivation for this result is to prove that a $p$-harmonic function on a metric measure space has gradient curves. The solution of the dual problem yields an intriguing new concept: a measure on the space of curves that is supported on the gradient curves. This measure is connected to several concepts, such as Alberti representations, currents and $p$-harmonic measures, that have recently been actively researched. The insights gained from this study lead us to an example that shows that sheaf property of $p$-harmonic functions (an open question posed in \cite{Bjorn2011}*{Section 9.4}) fails in the setting of metric measure spaces.

\subsection{Gradient curves}
To motivate our work, let us consider the Euclidean setting, where the connection between the behavior of a function $u$ and the geometry of its gradient curves is more evident. Our extension to metric measure spaces is motivated by the desire to establish a similar geometric structure for general $p$-harmonic functions. 

A gradient curve of a sufficiently smooth function $u\colon\Rn\to\R$ is a $C^1$ curve $\gamma\colon[a,b]\to\Rn$, $a<b$, such that
\begin{equation} \label{eq:intro-gradient-flow-eq}
    \gamma'(t)=\frac{Du(\gamma(t))}{|Du(\gamma(t))|}
    \quad\text{for all }t\in(a,b).
\end{equation}
Such curves are sometimes called gradient flows.
Note that the gradient curves in our setting are normalized: $|\gamma'|\equiv 1$. Gradient curves capture important geometric information of the function $u$ and of the ambient space. Indeed, they have been applied to the study of Alexandrov spaces \cites{Petrunin, Perelman}, and they play a key role in optimal transport and in the study of Sobolev spaces \cites{Ambrosio2015, Villani, Gigli2015,AGS}. Despite the simplicity of their definition, their regularity properties are not obvious even in the Euclidean case. A simple example, such as $u(x,y)=x^2-y^2$, shows that gradient curves of a smooth harmonic function do not necessarily yield a smooth foliation of the domain due to the presence of critical points. Due to this difficulty, it is not at all clear, if such curves could be found in a general metric measure setting. Indeed, usually the existence of gradient flows involves curvature assumptions or some convexity \cites{Petrunin, Perelman, Ambrosio2005, Villani}. Without such assumptions, there are very few methods available. 
 
In Euclidean spaces, there are special tools to obtain many gradient curves. If $u$ is a harmonic function on a planar domain ($n=2$), then the gradient curves of $u$ are precisely the level sets of its conjugate harmonic function $v$. This follows from the Cauchy-Riemann equations. 
An analogous result holds if $u$ is a planar $p$-harmonic function, that is, a weak solution of the $p$-Laplace equation
\begin{equation} \label{eq:intro-p-Laplace}
    \operatorname{div}(|Du|^{p-2}Du)=0,
\end{equation}
see \cite{Aronsson1988}.
In this case $u$ and its conjugate function $v$ are connected via
$$ \frac{\partial v}{\partial x}=-|Du|^{p-2}\frac{\partial u}{\partial y}
\quad\text{and}\quad\frac{
\partial v}{\partial y}=|Du|^{p-2}\frac{\partial u}{\partial x}. $$
In particular, their gradients are perpendicular and $v$ is $q$-harmonic with $q=\frac{p}{p-1}$.

In higher dimensions, the conjugate function $v$ can be replaced by a conjugate form. This is easier to see via a fluid dynamic interpretation of the conjugate function, which persists to higher dimensions. Indeed, in all dimensions, gradient curves can be seen as reparametrizations of stream curves of the flow of the divergence free vector field $|Du|^{p-2}Du$ -- at least, when this vector field is sufficiently regular. In the plane $n=2$, the conjugate function $v$ corresponds to the stream function for this flow, which measures how much flow passes through a curve connecting two points. In higher dimensions, the stream function is replaced by a $(n-2)$-form $\alpha$ which satisfies $d\alpha = \ast (|du|^{p-2}du)$, where $\ast$ is the Hodge dual. The existence of such a form $\alpha$ is implied by \eqref{eq:intro-p-Laplace}, which is equivalent to $\ast (|du|^{p-2}du)$ being a closed form. If $\alpha$ is integrated over an $n-2$-surface $\partial \sigma$ enclosing $\sigma$, then one obtains the total flow passing through $\sigma$. 

In summary, gradient curves are connected to conjugate functions and divergence free flows. Further, the constructions in the previous two paragraphs show, that it is natural to seek structures, such as forms, capturing many gradient curves instead of just solving for individual curves. In this article, we aim to construct gradient curves for $p$-harmonic functions on general metric measure spaces $X=(X,d,\mu)$ which also capture dual relationships. However, the tools from the previous paragraphs are not at our disposal in this general setting and we need to find replacements. 

Harmonic functions in the setting of a metric measure space were originally studied by Cheeger \cite{Cheeger1999} and Shanmugalingam \cites{Shanmugalingam2001}, with a different approach introduced by Gigli in \cite{Gigli2015}. By now, potential theory in metric spaces has been quite well developed; see the surveys \cites{AMS,Heinonen2007}. For a detailed exposition for the established theory of first order calculus in metric measure spaces we refer to the monograph by Heinonen, Koskela, Tyson and Shanmugalingam \cite{Heinonen2015} and Heinonen \cite{Heinonen2001}. 
For details of the potential theory in metric measure spaces we refer to the monograph by Björn and Björn \cite{Bjorn2011}. For classical non-linear potential theory, see the monograph of Heinonen, Kilpeläinen and Martio \cite{HKM}.

The standard assumptions for validity of reasonable potential theory in a metric measure space are that $X$ is complete, doubling and satisfies $p$-Poincar\'{e} inequality \cites{Heinonen2015,Bjorn2011}. Since we do not use the $p$-Poincar\'e inequality directly in our proofs, we omit its definition; see e.g. \cite{Heinonen2015}. With these assumptions, neither the system \eqref{eq:intro-gradient-flow-eq} of equations for gradient curves nor the $p$-Laplace equation \eqref{eq:intro-p-Laplace} makes sense when the Euclidean space $\Rn$ is replaced with a metric measure space $X$. 
Instead, $p$-harmonic functions are defined as minimizers of $p$-energy functional. If $\Om\subset X$ is a domain, $u\in N^{1,p}(\Om)$ is $p$-harmonic if it is a continuous local minimizer of the $p$-energy functional
$$ v\mapsto\int_{\Om}g_v^pd\mu $$
among $v\in N^{1,p}(\Om)$. If $X$ satisfies a $p$-Poincar\'e inequality, it is easy to see that there exist many such functions, see Section \ref{sec:energy}. 
Here $g_v$ denotes the minimal $p$-weak upper gradient of $v$. (For details of the definitions, see Section \ref{sec:preliminaries}.)
By the definition of $p$-weak upper gradient,
\begin{equation} \label{eq:intro-upper-gradient}
    |u(x)-u(y)|\leq\int_\gamma g_uds
\end{equation}
for $p$-a.e. nonconstant rectifiable curve $\gamma$ on $\Om$ with endpoints $x,y\in \Om$.
If a simple (nonconstant) rectifiable curve $\gamma$ satisfies
\begin{equation} \label{eq:intro-gradient-curves-def}
    |u(x)-u(y)|=\int_{\gamma|_{[x,y]}} g_uds,
\end{equation}
for all $x,y\in \gamma$ then we say that $\gamma$ is a gradient curve of $u$. Here $\gamma|_{[x,y]}$ is the subcurve connecting $x$ to $y$.
\begin{remark}
    It is enough to check \eqref{eq:intro-gradient-curves-def} for the end points of $\gamma:[0,\ell_\gamma]\to X$, i.e. $x=\gamma(0), y=\gamma(\ell_\gamma)$. Indeed, if $\gamma$ satisfies \eqref{eq:intro-upper-gradient} and this end point condition, then \eqref{eq:intro-gradient-curves-def} also holds simply by the triangle inequality. See Remark \ref{rmk:gradientcurve} for a more detailed argument.
\end{remark}
Note that this definition is compatible with \eqref{eq:intro-gradient-flow-eq}. It is direct to verify, that if $u\in C^1(\R^n)$, then $g_u=|\nabla u|$ and \eqref{eq:intro-gradient-curves-def} coincides with \eqref{eq:intro-gradient-flow-eq}. Our main result gives a natural measure supported on gradient curves of $u$, and thus the existence of such curves. In the statement of the theorem, we let $\Curv(\overline{\Om})$ denote the set of all rectifiable, arc-length parametrized curves on $\overline{\Om}$, equipped with a suitable metric. (For details, see Section \ref{sec:preliminaries}.)

\begin{theorem} \label{thm:intro-main}
Let $X=(X,d,\mu)$ be a complete, doubling $p$-Poincar\'{e} space, $1<p<\infty$.
Suppose that $\Om'\subset X$ is a domain and $u\in N^{1,p}_{\loc}(\Om')$ a $p$-harmonic function. 
Let us fix a bounded subdomain $\Om\subset\subset\Om'$ such that $\mu(\partial\Om)=0$ and $u$ is non-constant in $\Om$. 
There exists a Borel measure $\eta^*$ on
\begin{equation} \label{eq:intro-Gamma}
    \Gamma:=\{\gamma\in\Curv(\overline{\Om}):\gamma(0),\gamma(\ell_\gamma)\in\partial\Om\}
\end{equation}
such that the following hold:
\begin{enumerate}
    \item The $p$-energy of $u$ is given by integration over the gradient curves of $u$ with respect to $\eta^*$, 
    $$\|g_u\|_{L^p(\Om)}=\int_\Gamma|u(\gamma(0))-u(\gamma(\ell_\gamma))|d\eta^*. $$
    \item The measure $\eta^*$ is supported on gradient curves of $u$, that is, on those curves $\gamma\in\Gamma$ for which
    $$ |u(\gamma(0))-u(\gamma(\ell_\gamma))|=\int_\gamma g_uds. $$
    \item For any Borel set $E\subset\overline{\Om}$, the following identity holds:
    $$ \int_\Gamma\int_\gamma\mathbbm{1}_Edsd\eta^*=\int_{E}\Big(\frac{g_u}{\|g_u\|_{L^p(\Om)}}\Big)^{p-1}d\mu. $$
    In particular, for $\mu$-a.e. $x\in\{y\in\Om:g_u(y)\neq 0\}$ there exists a gradient curve $\gamma$ of $u$ passing through $x$.
\end{enumerate}
\end{theorem}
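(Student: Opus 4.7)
My plan is to cast Theorem~\ref{thm:intro-main} as the optimality statement for a primal-dual pair: the primal is a generalized $p$-modulus problem whose admissibility constraint for a density $g \geq 0$ is $\int_\gamma g\,ds \geq |u(\gamma(0))-u(\gamma(\ell_\gamma))|$ for $p$-almost every $\gamma \in \Gamma$, and the dual is an optimization problem over nonnegative Borel measures $\eta$ on $\Gamma$ described below. The three conclusions will then drop out of strong duality plus complementary slackness.

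For the primal, I would first observe that $g_u$ is admissible by the upper gradient inequality \eqref{eq:intro-upper-gradient}, and then argue that $g_u$ is the (essentially unique) $L^p$-minimizer: any competitor $g$ with strictly smaller $p$-energy would, by perturbing $u$ along a positive-$p$-modulus family of $\Gamma$-curves, produce a competitor against $u$ violating the $p$-Dirichlet minimality of $u$. For the dual, Fenchel/Lagrange duality applied to the bilinear coupling $(g,\eta)\mapsto \int_\Gamma\int_\gamma g\,ds\,d\eta$ leads to maximizing $\int_\Gamma |u(\gamma(0))-u(\gamma(\ell_\gamma))|\,d\eta$ subject to $\|dT_\eta/d\mu\|_{L^q(\Om)} \leq 1$, where $T_\eta(E):=\int_\Gamma\int_\gamma\mathbbm{1}_E\,ds\,d\eta$ is the push-forward to $\Om$ and $q = p/(p-1)$.

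Strong duality together with existence of a dual maximizer $\eta^*$ is the technical heart of the proof. I would first truncate to curves in $\Curv(\overline{\Om})$ of length at most some finite $L$, which is possible since $\overline{\Om}$ is bounded; on this sublevel set $\Curv(\overline{\Om})$ is Polish and Prokhorov applies. The $L^q$ constraint on $T_\eta$ together with the finite $\mu$-volume of $\Om$ controls the total mass of $\eta$, giving tightness of any maximizing sequence. Lower semicontinuity of $\eta \mapsto \|dT_\eta/d\mu\|_{L^q}$ under narrow convergence then delivers a maximizer, while the condition $\gamma(0),\gamma(\ell_\gamma)\in\partial\Om$ is preserved in the limit thanks to the hypothesis $\mu(\partial\Om)=0$. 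For the absence of a duality gap I would apply either Sion's minimax theorem on the relevant weak-$\ast$ compact convex set of admissible pairs, or a Hahn--Banach separation argument in the spirit of Fuglede's classical proof of $p$-modulus duality, here adapted to $u$-weighted admissibility.

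Once the duality identity $\|g_u\|_{L^p(\Om)} = \int_\Gamma |u(\gamma(0))-u(\gamma(\ell_\gamma))|\,d\eta^*$ is available, the three conclusions are read off from the chain
\[
\int_\Gamma |u(\gamma(0))-u(\gamma(\ell_\gamma))|\,d\eta^* \leq \int_\Gamma\int_\gamma g_u\,ds\,d\eta^* = \int_\Om g_u\,dT_{\eta^*} \leq \|g_u\|_{L^p(\Om)} \|dT_{\eta^*}/d\mu\|_{L^q(\Om)},
\]
whose extremes agree by strong duality, forcing equality throughout. Equality in the first step forces $\eta^*$-almost every curve to be a gradient curve of $u$, giving (2); equality in H\"older's inequality pins down $dT_{\eta^*}/d\mu$ as a constant multiple of $g_u^{p-1}$, and the normalization $\|dT_{\eta^*}/d\mu\|_{L^q(\Om)} = 1$ fixes the constant to give the density in (3); conclusion (1) is the duality identity itself, and the pointwise $\mu$-a.e.\ existence of a gradient curve on $\{g_u > 0\}$ is immediate from (3), since $T_{\eta^*}$ puts strictly positive mass on every positive-$\mu$ Borel subset of $\{g_u>0\}$. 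The hardest step I anticipate is the combined existence-plus-no-gap argument: the infinite-dimensionality of $\Curv(\overline{\Om})$, the fact that $L^q$-constraints do not directly bound $\eta(\Gamma)$, and the endpoint condition all interact non-trivially, and the length-truncation together with $\mu(\partial\Om)=0$ are my tools for controlling them.
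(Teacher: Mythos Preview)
Your overall framework---recasting the problem as a generalized $p$-modulus problem with dual given by maximizing $\int_\Gamma b\,d\eta$ over $\eta$ with $\|dT_\eta/d\mu\|_{L^q}\le 1$, then reading off (1)--(3) from the equality chain via H\"older---matches the paper exactly, and your final paragraph is essentially the content of the paper's Propositions~\ref{prop:properties} and~\ref{prop:density-of-transpose-of-A}.

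The genuine gap is in your compactness step. You assert that ``the $L^q$ constraint on $T_\eta$ together with the finite $\mu$-volume of $\Om$ controls the total mass of $\eta$.'' This is false: what you actually get from H\"older is $\int_\Gamma \ell(\gamma)\,d\eta \le \mu(\overline{\Om})^{1/p}$, which bounds the \emph{length-weighted} mass, not $\eta(\Gamma)$. Truncating lengths from above does not help, because the problem is short curves. Indeed, the paper's Example~\ref{ex:non-Radonness} shows that the optimal $\eta^*$ can have $\eta^*(\Gamma)=\infty$, so no argument producing $\eta^*$ as a direct weak (narrow) limit of finite Radon measures can succeed. Relatedly, the paper's route to a mass bound (Lemma~\ref{lem:primal}) requires $\inf_\Gamma b>0$, and in your setting $\inf_\Gamma b=0$ since $\Gamma$ contains constant curves on $\partial\Om$ and curves joining nearby boundary points; you never address this.

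The paper's fix is a two-stage approximation that you should incorporate. First, truncate to $\Gamma_n=\{\gamma\in\Gamma:\ell(\gamma)\le n,\ b(\gamma)\ge 1/n\}$; on each such compact family $\inf b>0$ holds and strong duality with an attained maximizer $\eta_n$ goes through via Sion exactly as you outline. Second---and this is the step missing from your sketch---to pass to the limit one does \emph{not} try to bound $\eta_n(\Gamma)$. Instead one restricts each $\eta_n$ to the compact sets $\tilde\Gamma_m=\{\gamma:\operatorname{diam}(\gamma)\ge 1/m,\ \ell(\gamma)\le m\}$, on which the diameter lower bound converts the length-integral bound into a genuine mass bound, extracts weak limits $\tilde\eta_m$ by Prokhorov and a diagonal argument, and then defines $\eta^*:=\lim_{m\to\infty}\tilde\eta_m$ as a monotone limit of Borel (not necessarily finite) measures. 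The verification that no mass escapes (Proposition~\ref{prop:duality-at-the-limit}) uses weak duality on the complements $\Gamma\setminus\tilde\Gamma_m$ and is where the real work lies. Your remark that $\mu(\partial\Om)=0$ is needed to preserve the endpoint condition in the limit is also misplaced: $\Gamma$ is closed simply because the endpoint maps are continuous and $\partial\Om$ is closed; the hypothesis $\mu(\partial\Om)=0$ is used elsewhere, to identify the energy over $\Om$ with that over $\overline{\Om}$ in Proposition~\ref{prop:equivalence}.
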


This theorem immediately implies the existence of gradient curves that pass through a.e. point of the space where the gradient does not vanish.

\begin{corollary}
Let $X=(X,d,\mu)$ be a complete, doubling $p$-Poincar\'{e} space, $1<p<\infty$.
Suppose that $\Om'\subset X$ be a domain and $u\in N^{1,p}_{\loc}(\Om')$ a non-constant $p$-harmonic function. Then for $\mu$-a.e. $x\in\{y\in\Om':g_u(y)\neq 0\}$ there exists a gradient curve $\gamma$ of $u$ that passes through $x$.
\end{corollary}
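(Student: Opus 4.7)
The plan is to reduce to Theorem~\ref{thm:intro-main} by exhausting $\Om'$ by a countable family of bounded subdomains to which the theorem applies. For any $x\in\Om'$ I would pick a bounded subdomain $\Om_x\subset\subset\Om'$ containing $x$ with $\mu(\partial\Om_x)=0$: take a small open ball $B(x,r)$ with $\overline{B(x,r)}\subset\Om'$ and with $r$ chosen from the cocountable set of radii for which the sphere carries no $\mu$-mass, and then pass to the connected component containing $x$, which is open since $p$-Poincar\'e spaces admit quasiconvex curves and are in particular locally path-connected. Because $\Om'$ is separable, a countable subfamily $\{\Om_i\}$ of such subdomains already covers $\Om'$.

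Next I would set $E:=\{y\in\Om':g_u(y)\neq 0\}$ and analyze each $\Om_i$ separately. If $u$ is constant on $\Om_i$, then $g_u\equiv 0$ almost everywhere on $\Om_i$ and so $\mu(E\cap\Om_i)=0$, and there is nothing to prove. Otherwise Theorem~\ref{thm:intro-main} applies to the pair $(\Om_i,u)$, and conclusion~(3) supplies, for $\mu$-a.e.\ $x\in E\cap\Om_i$, a curve $\gamma\in\Curv(\overline{\Om_i})$ through $x$ that satisfies the gradient-curve identity \eqref{eq:intro-gradient-curves-def}. Such a $\gamma$ is automatically a gradient curve of $u$ viewed in $\Om'$, because \eqref{eq:intro-gradient-curves-def} depends only on the values of $u$ and $g_u$ along $\gamma$ and the minimal $p$-weak upper gradient does not depend on whether it is computed relative to $\Om_i$ or $\Om'$.

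Taking the countable union of the exceptional null sets produced in the previous step yields the conclusion on the whole of $E$. I do not expect a real obstacle here: all the analytic content is in Theorem~\ref{thm:intro-main}, and what remains is the routine bookkeeping of the exhaustion together with the easy fact that the gradient-curve property is local with respect to the ambient domain.
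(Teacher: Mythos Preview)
Your proposal is correct and follows the same route the paper has in mind: the paper states that the corollary is an immediate consequence of Theorem~\ref{thm:intro-main} and, in the remark that follows, explicitly points to exhausting by bounded subdomains as the way to remove the boundedness assumption. Your bookkeeping (choosing radii with $\mu(\partial B)=0$, passing to the connected component, noting that $\partial\Om_i\subset\partial B(x,r)$ via local connectedness, and invoking the locality of $g_u$) fills in exactly the routine details the paper omits.
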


\begin{remark}
    The $p$-Poincar\'e inequality in these statements is only used to guarantee the existence of a continuous solution to the $p$-Dirichlet problem.  Whenever there is some other way, that avoids the $p$-Poincar\'e inequality, and produces (continuous) $p$-harmonic functions, this assumption could be avoided. There are natural settings where this is applicable, i.e. where continuous $p$-harmonic functions are obtained without a $p$-Poincar\'e inequality. For example, in the work of Rajala on uniformization \cite{Rajala}, his capacity minimizers are shown to be continuous. Our results would construct in those settings a (dual) measure supported on curves. This measure should be closely connected to the conjugate function constructed in \cite{Rajala}. Studying this relationship may lead new perspectives into unformization.

    In Theorem \ref{thm:intro-main}, we assume that $\Omega$ is bounded. It seems likely that one could modify the statement to obtain an unbounded result and prove it by exhausting a domain by bounded subdomains. This is a bit subtle and is not studied here. The measure $\eta^*$ may need not be finite; see Example \ref{ex:non-Radonness}. This is a small technical point, which can be disregarded at first reading. The proof shows that $\eta^*$ is always $\sigma$-finite. The measure $\eta^*$ can also be non-unique, and this phenomenon is exploited in the counter-example to the sheaf property.
\end{remark}

\begin{example} A simple example illustrates the construction of the measure $\eta^*$ quite well.
Let $X=\R^2$ with the usual Euclidean metric and Lebesgue measure and let $u(x,y)=x$.
Then $u$ is $p$-harmonic in $\R^2$ and $g_u=1$. Let $\Om=(0,a)\times(0,b)$. The gradient curves in $\Om$ are straight horizontal lines, $\eta^*$ is a uniform measure on such curves and 
$$ \|g_u\|_{L^p(\Om)}=(ab)^{1/p} \quad\text{and}\quad
\int_\Gamma|u(\gamma(0))-u(\gamma(\ell_\gamma))|d\eta^*=a\eta^*(\Gamma). $$
We can apply Theorem \ref{thm:intro-main} to conclude that $\eta^*(\Gamma)=\frac{b^{1/p}}{a^{1/q}}$ where $q=\frac{p}{p-1}$. 
\end{example}

The construction of gradient curves through most points in the space was one of our original motivation for this project. It may be a bit surprising, however, that a measure on gradient curves arises in this setting. Besides naturally coming out of our duality argument, it also has a number of interesting connections to recent research:

\begin{itemize}
    \item In \cite{Cheeger1999}, blow-ups of Sobolev functions are shown to be generalized linear. This is used in \cite{Cheeger2016} to construct a measure on curves disintegrating the measure $\mu$ of the blown up space. Our result shows that such a disintegration of the measure exists without blowing up for $p$-harmonic functions.
    \item Measures on curve fragments, i.e. Alberti representations, have been an important tool in studying differential structures in metric measure spaces \cites{Bate2015, Schioppa2016-metric, Schioppa2016-derivations, Alberti2016}. Our result gives a natural construction of a measure on full curves, which is usually harder to obtain.
    \item Plans and test plans are measures on curves. They arise prominently in duality arguments for modulus \cite{Ambrosio2005} and in defining Sobolev spaces \cites{Gigli2015,Ambrosio2024preprint,AGS,Ambrosio2016}. Further, they can be used to \emph{represent the gradient} of a general Sobolev function, see \cites{Pasqualetto2022, Eriksson-Bique2024,Gigli2015}. The measure $\eta^*$ in our main theorem represents, in a sense, the gradient of a $p$-harmonic function.  It is also a dual measure to a generalized modulus problem, in analogy to \cite{Ambrosio2015}.
    \item Every finite measure on curves also defines a normal integral current $T$ in the sense of \cite{Ambrosio2000}, see \cite{paolini2012decomposition}*{Theorem 4.2.9}. This current depends on the orientation of the curves $\gamma$, and it makes sense to choose one such that $u\circ \gamma$ is increasing for $\eta^*$-a.e. $\gamma$. See also the discussion in \cites{Ambrosio2016,Dimarino, Ambrosio2024preprint} on the closely related notion of a derivation with a divergence. Indeed, there is a close connection between measures on curves, derivations and measurable vector fields \cites{Schioppa2016-derivations, Schioppa2016-metric}. The fact that our measure is supported on curves corresponds to the current being normal and the vector field having a measure valued divergence.
    \item Given $\eta^*$, one can consider the measure $\mu = e_{\ell_\gamma}^*(\eta^*)-e_0^*(\eta^*)$. Here we push-forward with the evaluation maps $e_{t}(\gamma)=\gamma(t)$, and one must first orient the curves $\gamma$ so that $u\circ \gamma$ is increasing. This measure should be thought of as the boundary of the integral current in the previous bullet point. It should also be connected closely with the Neumann data associated to $u$ see e.g. \cites{capogna2022neumann, Maly}. Further, this measure should also coincide with the $p$-harmonic measure considered in \cites{Bennewitz,HKM}. (Here, it should be noted that the term $p$-harmonic measure can refer to two completely different objects, cf. \cite{LLorente}.)
    \item  Gradient curves of Busemann functions are used in the classical proof of the Cheeger-Gromoll splitting theorem \cite{Cheeger1971} and its generalization to $RCD(K,N)$-spaces by Gigli \cite{Gigli2013splitting}. Roughly speaking, such gradient curves yield the splitting of the space into a product by identifying the curves parallel to the $\R$-factor. Indeed, one could apply our result to give a measure $\eta^*$ supported on the gradient curves of a Busemann function that represents the reference measure $\mu$ in the sense of (3) in Theorem \ref{thm:intro-main} (cf. \cite{Cheeger2016}). It may yield a different perspective to the proof of \cite{Gigli2013splitting}. In \cite{David2020preprint}, an analogous splitting theorem is shown for metric spaces that admit a similar measure on curves. 
\end{itemize}
We leave a more detailed exploration of these connections for future work. It seems that all of them express similar dual relationships that conjugate functions and forms encode in Euclidean spaces.

\subsection{Dual problem} In order to find the measure $\eta^*$ in Theorem \ref{thm:intro-main}, we first reformulate $p$-harmonicity in terms of a generalized modulus problem. 
The first observation is that if $u$ is $p$-harmonic in $\Om$, then its minimal $p$-weak upper gradient $g_u$ is the unique (weak) minimizer to a 
\begin{equation} \label{eq:intro-primal}
    \inf\Big\{
    \|\rho\|_{L^p(\overline{\Om})}:
    \rho\colon\overline{\Om}\to[0,\infty]
    \text{ Borel, and }
    A\rho\geq b\text{ in }\Gamma\Big\}.
\end{equation}
Here, $\Gamma\subset\Curv(\overline{\Om})$ is the set of all curves $\gamma$ that join two boundary points $x,y\in\partial\Om$, as defined in \eqref{eq:intro-Gamma}, \\ 
$b\colon\Gamma\to[0,\infty)$ is a function given by
$$ b(\gamma):=|u(x)-u(y)|
\quad\text{for all }\gamma\in\Gamma, $$
and if $\rho\colon\overline{\Om}\to[0,\infty]$ is a Borel function, then $A\rho\colon\Gamma\to[0,\infty]$ is a (Borel) function given by
$$ (A\rho)(\gamma):=\int_\gamma\rho ds
\quad\text{for all }\gamma\in\Gamma. $$
In this article we call \eqref{eq:intro-primal} a generalized modulus problem. Indeed, if $b\equiv 1$, then \eqref{eq:intro-primal} yields the classical modulus of the curve family $\Gamma$, $\Mod_p(\Gamma)$, see e.g. \cites{Heinonen2015,Ahlfors, Fuglede1957}. The equivalence of this generalized modulus problem and gradients of $p$-harmonic functions is established in Sections \ref{sec:generalization} and \ref{sec:energy}.

The reason we introduce \eqref{eq:intro-primal} is that it gives us a scalar optimization problem very similar to the one considered in \cite{Ambrosio2015}. There, the authors showed a general duality principle between modulus and (probability) measures on curves. Our key step is to apply  this approach (in a simplified form) to the problem \eqref{eq:intro-primal}.   
Subsequent to \cite{Ambrosio2015}, this duality argument has been used and refined in \cites{Fassler2019,David2020preprint,Durand-Cartagena2021,HonzlovaExnerova2018}. The concept of linear/convex duality is however much older than these and a topic of active research in the optimization community, see e.g. \cites{Barbu,Rockafellar}. We sketch here a slightly simplified version of our argument, see Sections \ref{sec:energy} and \ref{sec:dual} for details.

Let $\la h,\eta\ra:=\int_\Gamma hd\eta$ denote the (duality) pairing between a Borel function $h\colon\Gamma\to[0,\infty]$ and a Borel measure $\eta$ on the space of curves $\Gamma$. We define Lagrangian function
\begin{align*}
    \Lag(\rho,\eta):
    &=\|\rho\|_{L^p(\overline{\Om})}-\la A\rho-b,\eta\ra \\
    &=\|\rho\|_{L^p(\overline{\Om})}-\int_\Gamma(\int_\gamma\rho ds-b(\gamma))d\eta. 
\end{align*}
Then the problem in \eqref{eq:intro-primal} can be reformulated by using the Lagrangian:
\begin{align*}
    \inf_{\rho\in\A(\Gamma,b)}\|\rho\|_{L^p(\overline{\Om})}
    =\inf_{\rho\in\LB^p_+(\overline{\Om})}\sup_{\eta\in\M_+(\Gamma)}\Lag(\rho,\eta)
\end{align*}
where $\A(\Gamma,b):=\{\rho\colon\overline{\Om}\to[0,\infty]:\rho\text{ Borel, }A\rho\geq b\text{ in }\Gamma\}$, $\LB^p_+(\overline{\Om}):=\{\rho\colon\overline{\Om}\to[0,\infty]:\rho\text{ Borel, }\|\rho\|_{L^p(\overline{\Om})}<\infty\}$ and $\M_+(\Gamma)$ denotes the set of finite Radon measures on $\Gamma$.
The associated dual problem is
$$ \sup_{\eta\in\B(\Gamma)}\la b,\eta\ra =\sup_{\eta\in\M_+(\Gamma)}\inf_{\rho\in\LB^p_+(\overline{\Om})}\Lag(\rho,\eta) $$
where $\B(\Gamma):=\{\eta\in\M_+(\Gamma):\|\frac{d(A^\intercal\eta)}{d\mu}\|_{L^q(\overline{\Om})}\leq 1\}$. Here $A^\intercal$ denotes the formal transpose of $A$, as defined in Section \ref{ssec:A-and-its-transpose}, and $q=\frac{p}{p-1}$.
The rough idea is that both the primal problem and the dual problem attain minimum $\rho^*$ and maximum $\eta^*$, respectively, and in the presence of strong duality we have
\begin{equation} \label{eq:intro-strong-duality}
    \|\rho^*\|_{L^p(\overline{\Om})}=\Lag(\rho^*,\eta^*)
    =\int_\Gamma bd\eta^*.
\end{equation}
The maximizing measure $\eta^*$ is the measure obtained in Theorem \ref{thm:intro-main} and the properties 1-3 follow from \eqref{eq:intro-strong-duality}. 

A key technical issue is that the measure $\eta^*$ in Theorem \ref{thm:intro-main} need not be finite, and thus can not directly be obtained in this fashion. In fact, we prove exact duality in Proposition \ref{prop:strong-duality} for \eqref{eq:intro-primal} under a much more restricted set of assumptions than is applicable in Theorem \ref{thm:intro-main}. The measure $\eta^*$ is obtained by a further exhaustion and limiting argument, where strong duality is applied to a slightly regularized problem. This argument is executed in Section \ref{sec:proof-of-main}.

\subsection{Sheaf property}
As stated, the measure $\eta^*$ in Theorem \ref{thm:intro-main} is not necessarily unique. 
For example, if we equip Euclidean plane $\R^2$ with $\ell_1$-norm $|x|_1:=|x_1|+|x_2|$ and the Lebesgue measure $\lambda$, then linear function $f(x)=x_1+x_2$ has infinitely many different families of gradient curves. In this case, the minimal $p$-weak upper gradient is given by $g_f=|\nabla f|_\infty=\max(|\partial_x f|, |\partial_y f|)$ and the linear function $f$ is a $p$-harmonic function as a local minimizer of $u\mapsto \int |\nabla u|_\infty^p d\lambda$ for all $p\in (1,\infty)$.  This observation leads us to realize the failure of the so called sheaf property in general metric measure space.

The sheaf property  \cite{Bjorn2011} of $p$-harmonic functions says that if $\Om_1,\Om_2\subset X$ are domains and $u\in N^{1,p}(\Om_1\cup\Om_2)$ is $p$-harmonic in $\Om_1$ and $\Om_2$ separately, then it is $p$-harmonic in the union $\Om_1\cup\Om_2$. Such property holds trivially in the Euclidean setting, $X=\Rn$, because $u$ solves the $p$-Laplace equation \eqref{eq:intro-p-Laplace}
in the weak sense. Further, Gigli and Mondino \cite{Gigli2013}, showed that in metric spaces, whenever one can write a version of \eqref{eq:intro-p-Laplace}, then the sheaf property holds. Infinitesimally Hilbertian spaces are the most prominent examples of spaces where this applies. The sheaf property is so natural, that in axiomatic potential theory, which should be an extension of the Euclidean setting, the sheaf property is one of the axioms, see e.g. \cites{CC,Bauer} for the linear theory and \cite{HKM} for the non-linear theory. It is thus natural to conjecture that the sheaf property holds generally.

Further evidence is given by the fact that the energy $\int_\Omega g_u^pd\mu$ is built using a local gradient $g_u$. Being a minimizer for it would thus seem to suggest that the property is local.  However, in general, one can not derive any analogue -- even a weak one -- for the $p$-harmonic equation \eqref{eq:intro-p-Laplace}. This failure occurs already for minimizers of the energy  $\int |\nabla u|_\infty d\lambda$ in Euclidean spaces, since the norm $v\mapsto |v|_\infty$ is not differentiable. In fact, the methods of \cite{Gigli2013} rely heavily on the uniqueness of gradient vector fields which is related to the ability to make sense of \eqref{eq:intro-p-Laplace}. In contrast, we exploit the failure of this uniqueness. Indeed, our counter example is actually in the not so exotic metric measure space $X=\R^2$, equipped with the Lebesgue measure and the $\ell_1$-metric.

One of the reasons the sheaf problem has remained open for this long is that it is rather difficult to give examples of explicit $p$-harmonic functions in metric measure spaces. Further, it is hard to construct potential counter examples by just focusing on the minimization of the energy, since it is not clear how one might contradict minimality in $\Omega_1\cup \Omega_2$ while preserving minimality in both $\Omega_1$ and $\Omega_2$. Phrased in this way, it is difficult to even get started. Gradient curves, and our result on their existence, offer the key insight which leads to a simple counter example.  

Suppose that $u\in N^{1,p}(\Om_1\cup\Om_2)$ is $p$-harmonic in the domains $\Om_1$ and $\Om_2$ separately. By Theorem \ref{thm:intro-main} there exist families of gradient curves $\eta_1, \eta_2$ in $\Om_1$ and $\Om_2$. For $u$ to be $p$-harmonic in $\Om_1\cup \Om_2$, there would have to also be a family of gradient curves in $\Om_1\cup\Om_2$. However, due to the possible non-uniqueness of gradient curves, it may happen that the families $\eta_1, \eta_2$ do  not agree on the intersection $\Om_1\cap \Om_2$. Moreover, it is possible for them to be so ``fundamentally incompatible'' that $u$ must fail to be $p$-harmonic in the union $\Om_1\cup\Om_2$. For example, consider $\Om_1=(-1,1)\times(0,1), \Om_2=(0,1)\times(-1,1)$ which are two rectangles which intersect in a square $(0,1)\times (0,1)$. Suppose the support of $\eta_1$ consist of curves parallel to the $y$-axis and the curves in the support of $\eta_2$ consist of curves parallel to the $x$-axis. A moments thought shows that these two families can not be combined to a family in $\Omega_1\cup \Omega_2$ since $\eta_2$ has curves starting in $\partial \Omega_2 \cap \Om_1$, and $\eta_1$ has curves starting in $\partial \Omega_1 \cap \Om_2$. With this realization, the only task is then to cook up a piecewise defined function with these gradient curves. See Figure \ref{fig:regionsphi} for an illustration and Section \ref{sec:failuresheaf} of the formula. 

\begin{theorem}\label{thm:intro-failureofsheaf} There exists doubling metric measure space $X$ which satisfies a $p$-Poincar\'e inequality and domains $\Omega_1,\Omega_2\subset X$ and a function $u\in N^{1,p}(\Omega_1\cup\Omega_2)$ which is a $p$-harmonic in $\Omega_1$ and in $\Omega_2$, but is not a $p$-harmonic in $\Omega_1\cup \Omega_2$. 
\end{theorem}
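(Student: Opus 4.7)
My plan is to realize the counter-example in $X = \R^2$ equipped with the $\ell_1$-metric $d((x,y),(x',y'))=|x-x'|+|y-y'|$ and Lebesgue measure $\lambda$. This space is bi-Lipschitz to Euclidean $\R^2$, hence doubling and supporting a $p$-Poincar\'e inequality; since $(\ell_1)^*=\ell_\infty$, the minimal $p$-weak upper gradient of a Lipschitz function is $g_f=\max(|\partial_x f|,|\partial_y f|)$. Take $\Omega_1=(-1,1)\times(0,1)$, $\Omega_2=(0,1)\times(-1,1)$, and
\[
 u(x,y)=\max(x,0)+\max(y,0),
\]
so $u=y$ on the left wing $(-1,0]\times(0,1)$, $u=x$ on the bottom wing $(0,1)\times(-1,0]$, and $u=x+y$ on the intersection $(0,1)^2$, while $g_u\equiv 1$ on $\Omega:=\Omega_1\cup\Omega_2$. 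For $p$-harmonicity on $\Omega_1$ (and symmetrically on $\Omega_2$), I note $\int_{\Omega_1}g_u^p\,d\lambda=2$, while for any competitor $v\in N^{1,p}(\Omega_1)$ with $v|_{\partial\Omega_1}=u|_{\partial\Omega_1}$ the upper gradient inequality along $\lambda$-a.e. vertical segment $t\mapsto(x,t)$ yields $\int_0^1 g_v(x,t)\,dt\ge 1$; Jensen's inequality and Fubini then give $\int_{\Omega_1}g_v^p\,d\lambda\ge 2$.

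I argue $u$ is not $p$-harmonic on $\Omega$ by contradiction, using Theorem~\ref{thm:intro-main}: if it were, there would exist a Borel measure $\eta^*$ on $\Gamma=\{\gamma:\gamma(0),\gamma(\ell_\gamma)\in\partial\Omega\}$, supported on gradient curves of $u$, with $A^\intercal\eta^*=C\lambda$ on $\overline\Omega$ for the constant $C:=\lambda(\Omega)^{-(p-1)/p}>0$. Classifying gradient curves, the equality $|u_1-u_0|=\ell_\gamma$ (forced by $g_u\equiv 1$) must realize the chain
\[
 |u_1-u_0|\le |\max(x_1,0)-\max(x_0,0)|+|\max(y_1,0)-\max(y_0,0)|\le |x_1-x_0|+|y_1-y_0|\le \ell_\gamma
\]
entirely as equalities. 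This forces coordinate-wise monotonicity of $\gamma$ together with, for each coordinate separately, the endpoints being both non-negative or coinciding. Combined with $\gamma\subset\overline\Omega$ and endpoints on $\partial\Omega$ (and using $\lambda(\partial\Omega)=0$ to discard curves running along $\partial\Omega$), the non-trivial gradient curves split into three families: (I) left-wing verticals $(x,0)\to(x,1)$, $x\in[-1,0]$; (II) bottom-wing horizontals $(0,y)\to(1,y)$, $y\in[-1,0]$; and (III) coordinate-monotone curves having $(0,0)$ as one endpoint and terminating on the top or right side of the intersection.

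Testing property~(3) on rectangular boxes inside each wing forces $\eta^*|_{(\mathrm{I})}=C\,dx$ on $[-1,0]$ and $\eta^*|_{(\mathrm{II})}=C\,dy$ on $[-1,0]$. I then evaluate on $E_r:=B_r(0,0)\cap\overline\Omega$: the $\ell_1$-ball $B_r(0,0)=\{|x|+|y|<r\}$ has area $2r^2$ and meets three of the four coordinate quadrants of $\overline\Omega$, giving $\lambda(E_r)=3r^2/2$. Families (I) and (II) each contribute $Cr^2/2$ to $A^\intercal\eta^*(E_r)$. Since the $\ell_1$-length from $(0,0)$ along a coordinate-monotone curve equals the sum of its coordinates, the portion of any (III)-curve $\gamma$ inside $B_r(0,0)$ has $\ell_1$-length $\min(r,\ell_\gamma)$, so that $\int\min(r,\ell_\gamma)\,d\eta^*|_{(\mathrm{III})}(\gamma)=Cr^2/2$. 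Dividing by $r$ and letting $r\to 0^+$, monotone convergence forces $\eta^*|_{(\mathrm{III})}(\{\ell_\gamma>0\})=0$. But then $A^\intercal\eta^*$ gives zero to the open intersection $(0,1)^2$, contradicting $A^\intercal\eta^*((0,1)^2)=C>0$.

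The main obstacle is the gradient-curve classification: the chain of inequalities is elementary, but one must use it efficiently to show that no gradient curve of positive length can enter the interior of the intersection without having $(0,0)$ as an endpoint---any such candidate starting elsewhere on $\partial\Omega$ fails the sign condition for $x$ or $y$, and concatenations that attempt to pass through $(0,0)$ from, say, a left-wing boundary point violate the endpoint equality on the zero-set portion where $u$ is constant. Once this rigidity is established, the corner singularity contradiction is forced by the exact $\ell_1$-length-to-coordinate correspondence for coordinate-monotone curves emanating from $(0,0)$, which makes the $\eta^*$-contribution near the corner scale as $r$ while the ambient $\lambda$-area scales as $r^2$.
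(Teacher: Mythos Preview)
Your space, domains $\Omega_1,\Omega_2$, function $u=\max(x,0)+\max(y,0)$, and proof of $p$-harmonicity on each $\Omega_i$ via Jensen along vertical (resp.\ horizontal) segments are exactly the paper's. The difference is in proving that $u$ is \emph{not} $p$-harmonic on $\Omega_1\cup\Omega_2$. The paper does this by a direct, elementary computation: it writes down an explicit piecewise-linear compactly supported $\phi$, computes $g_{u+\epsilon\phi}$ region by region, and shows $\frac{d}{d\epsilon}\int g_{u+\epsilon\phi}^p\,d\lambda\big|_{\epsilon=0}<0$. No appeal to Theorem~\ref{thm:intro-main} is made; the paper in fact remarks that gradient curves serve only as the heuristic that led to the formula.

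Your route instead invokes Theorem~\ref{thm:intro-main} and derives a contradiction from the structure of $\eta^*$. This is more conceptual---it turns the heuristic into a proof and makes the obstruction at the re-entrant corner completely transparent. Your classification of gradient curves (via the chain of equalities forcing either $\gamma_1\equiv\text{const}<0$, $\gamma_2\equiv\text{const}<0$, or both coordinates nonnegative and nondecreasing) is correct, as is the corner-scaling argument showing $\eta^*|_{(\mathrm{III})}$ is trivial and hence $A^\intercal\eta^*((0,1)^2)=0$.

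There is, however, a genuine technical gap. Theorem~\ref{thm:intro-main} is stated for $\Omega\subset\subset\Omega'$ with $u$ $p$-harmonic on the larger domain $\Omega'$, whereas you apply it with $\Omega=\Omega'=\Omega_1\cup\Omega_2$. This is not a harmless oversight: the re-entrant corner $(0,0)$ lies on $\partial(\Omega_1\cup\Omega_2)$, so \emph{no} compactly contained subdomain can have $(0,0)$ in its closure---yet your classification (type~(III) curves emanate from $(0,0)\in\partial\Omega$) and your scaling contradiction both hinge on exactly this. To repair it you must either argue that the conclusion of Theorem~\ref{thm:intro-main} extends to $\Omega=\Omega_1\cup\Omega_2$ here (plausible, since $u$ is globally Lipschitz hence in $N^{1,p}(\overline\Omega)\cap C(\overline\Omega)$, and density of compactly supported Lipschitz functions in $N^{1,p}_0(\Omega)$ upgrades local minimality to Dirichlet minimality---but this needs to be stated and justified), or rerun your gradient-curve analysis on a genuinely shrunken domain, which alters the boundary geometry near the corner and requires a modified argument. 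The paper's direct perturbation avoids this issue entirely.
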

We remark, that the proof that $u$ is not $p$-harmonic is quite simple and direct, and it does not go through Theorem \ref{thm:intro-main}. Gradient curves only play a role giving the right heuristic, which led us to the explicit formula for the counter example. This application demonstrates how gradient curves can help construct interesting examples of $p$-harmonic functions. It also shows how they contain useful geometric insights that may help tackle old problems.

\subsection{Structure of paper}

This article is organized as follows. In Section \ref{sec:preliminaries} we explain all the preliminaries, including the metric structure of the space of rectifiable curves. In Section \ref{sec:generalization} we define the general modulus problem \eqref{eq:intro-primal}. This generalized modulus problem is related to the minimal $p$-weak upper gradient of a $p$-harmonic function in Section \ref{sec:energy}. Duality of the generalized modulus problem is studied in Section \ref{sec:dual}, where a version of strong duality is shown. This, together with a delicate regularization argument, is used to prove Theorem \ref{thm:intro-main} in Section \ref{sec:proof-of-main}. The sheaf property is then discussed in Section \ref{sec:failuresheaf}, where we prove Theorem \ref{thm:intro-failureofsheaf}. The final section is independent of the previous sections and can be read independently. 
%------------------------------
\section{Preliminaries} \label{sec:preliminaries}
Throughout the article, let $X=(X,d,\mu)$ be a metric measure space.
That is, $(X,d)$ is a metric space endowed with a complete Borel-measure $\mu$ such that $0<\mu(B)<\infty$ for all metric balls $B\subset X$. Throughout the article we assume that $X$ contains rectifiable curves.
%------------------------------
\subsection{Notation for basic function spaces}
Let $Y$ be a metric space. We let $C(Y)$ denote the space of continuous functions and $LSC(Y)$ denote the space of lower semi-continuous functions. The space of Borel-measurable functions on $Y$ is denoted by $\Bor(Y)$. Subscript '$+$' stands for those functions that take only nonnegative values, for example 
$$ \Bor_+(Y):=\{f\in\Bor(Y):f\geq0\}. $$
For a function $f\colon Y\to[-\infty,\infty]$ we denote
$$ \|f\|_\infty:=\sup_{y\in Y}|f(y)|. $$

For a metric measure space $X$ and for any $1\leq p<\infty$, we let $L^p(X):=L^p(X,\mu)$ denote the usual Lebesgue space with norm 
$$ \|f\|_{L^p(X)}:=\Big(\int_X|f|^pd\mu\Big)^{1/p}, $$
and
$$ \LB^p(X):=\Big\{f\in\Bor(X):\int_X|f|^pd\mu<\infty\Big\}. $$
Subscript '$\loc$' stands for the local version of a space, for example $L^p_{\loc}(X)$ consists of measurable functions on $X$ that are locally $p$-integrable. If $f_i\in L^p(X)$ converge weakly to $f\in L^p(X)$ we write $f_i \xrightharpoonup{i\to\infty}f$.
%------------------------------
\subsection{Rectifiable curves}
The space of rectifiable curves on $X$ is of great importance in this article. Let $a\leq b$ and consider a continuous curve $\gamma\colon[a,b]\to X$ that is rectifiable. By rectifiable, we mean that its length is finite:
$$ \ell(\gamma):=\sup\Big\{\sum_{i=0}^{n-1}d(\gamma(t_i),\gamma(t_{i+1})):n\in\N,a=t_0\leq\ldots\leq t_n=b\Big\}<\infty. $$
Sometimes it is appropriate to denote $\ell_\gamma:=\ell(\gamma)$. It follows directly from the definition that length of a curve is independent of the parametrization. More precisely, if $\sigma\colon [c,d]\to[a,b]$ is a nondecreasing continuous surjection, then $\ell(\gamma)=\ell(\gamma\circ\sigma)$.

By \cite{Ambrosio2004}*{Section 4.2} and \cite{Heinonen2001}*{Section 7.1} there exists a  unique arc-length reparametrization
$$ \tilde{\gamma}\colon[0,\ell(\gamma)]\to X $$
of $\gamma$, which is a continuous rectifiable curve such that $\ell(\tilde{\gamma}|_{[0,t]})=t$ for all $t\in [0,\ell(\gamma)]$. It holds that $\gamma=\tilde{\gamma}\circ V$, where $V(t):=\ell(\gamma|_{[a,t]})$ is called the length function of $\gamma$. The length function is continuous, nondecreasing surjection $[a,b]\to[0,\ell(\gamma)]$. 

If $\rho\in\Bor_+(X)$ or $\rho\in C(X)$, the curve integral of $\rho$ along $\gamma$ is defined through arc-length reparametrization $\tilde{\gamma}$ as
$$ \int_{\gamma}\rho ds:=\int_0^{\ell(\gamma)}\rho(\tilde{\gamma}(t))dt. $$
We define the space of rectifiable curves on $X$ by
$$ \Curv(X):=\{\gamma\colon[0,\ell(\gamma)]\to X:\gamma\text{ is continuous, rectifiable and arc-length parametrized}\}. $$
This definition is the same as in \cite{Bjorn2011}, except we include constant curves into $\Curv(X)$.
%------------------------------
\subsection{Newtonian space, gradient curves and \texorpdfstring{$p$}{p}-harmonic functions}
The Newtonian space is defined as
$$ N^{1,p}(X):=\{f\colon X\to[-\infty,\infty]:\|f\|_p+\inf_g\|g\|_p<\infty\} $$
where the infimum is taken over all upper gradients $g$ of $f$. A function $g\in\Bor_+(X)$ is an upper gradient of $f$ if
\begin{equation} \label{eq:upper-gradient}
    |f(x)-f(y)|\leq \int_\gamma gds
\end{equation}
for all nonconstant $\gamma\in\Curv(X)$ with endpoints $x$ and $y$.  We say that $g$ is an upper gradient for $f$ along $\gamma$, if \eqref{eq:upper-gradient} holds for all subcurves of $\gamma$. Here, we use the convention $|\infty-\infty|=\infty$. If there exists a collection of curves $\Gamma\subset\Curv(X)$ such that $\Mod_p(\Gamma)=0$ and \eqref{eq:upper-gradient} holds for all nonconstant curves except those in $\Gamma$, then we say that $g$ is a $p$-weak upper gradient of $f$. Here 
$$ \Mod_p(\Gamma):=\inf\Big\{\int_X\rho d\mu:\int_\gamma\rho ds\geq 1\text{ for all }\gamma\in\Gamma\Big\} $$
is the $p$-modulus of $\Gamma$. If a property on the space of curves $\Curv(X)$ holds up to a set of zero $p$-modulus, we say that is holds  $p$-almost everywhere, $p$-a.e. for short. If $\Gamma\subset\Curv(X)$ is such that $\Mod_p(\Gamma)=0$, we say that $\Gamma$ is $p$-exceptional. 

If $\Om\subset X$ is open, then the Newtonian space on $\Om$ with zero boundary value can be defined as
\begin{equation} \label{eq:N1p-with-zero-boundary-value}
    N^{1,p}_0(\Om):=\{f|_{\Om}:f\in N^{1,p}(\overline{\Om})\text{ and }f=0\text{ on }\partial\Om\},
\end{equation}
see \cite{Bjorn2011}*{Section 2.7}.

We will make frequent use of the following Fuglede's Lemma, \cite{Fuglede1957}*{Theorem 3}.

\begin{lemma}[Fuglede's lemma] \label{lem:Fuglede}
    If $(f_i)$ is a sequence of functions in $\LB^p(X)$ and $f\in\LB^p(X)$ such that $f_i\xrightarrow{i\to\infty}f$ in $L^p(X)$, then there exists a subsequence $(f_{i_k})$ such that
    \[
    \lim_{k\to\infty}\int_\gamma f_{i_k} ds = \int_\gamma f ds, 
    \]
    for $p$-a.e. $\gamma\in\Curv(X)$.
\end{lemma}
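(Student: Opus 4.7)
The plan is to prove Fuglede's Lemma by a rapid-convergence-plus-Borel-Cantelli argument, using countable subadditivity of $p$-modulus.

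First, I would pass to a subsequence, still denoted $(f_{i_k})$, for which the $L^p$-convergence is summable at a good geometric rate, say $\|f_{i_k}-f\|_{L^p(X)}\leq 4^{-k}$. The point of accelerating the convergence is to make Chebyshev-style estimates on curve families summable.

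Next, for each $k$ define the curve family
$$ \Gamma_k:=\Big\{\gamma\in\Curv(X):\int_\gamma |f_{i_k}-f|\,ds\geq 2^{-k}\Big\}. $$
The function $\rho_k:=2^k|f_{i_k}-f|$ is then admissible for $\Mod_p(\Gamma_k)$, since $\int_\gamma \rho_k\,ds\geq 1$ for every $\gamma\in\Gamma_k$. Plugging into the definition of $p$-modulus gives $\Mod_p(\Gamma_k)\lesssim 2^{kp}\|f_{i_k}-f\|_{L^p}^p\leq 2^{kp}\cdot 4^{-kp}=2^{-kp}$, which is summable in $k$.

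Then I would invoke countable subadditivity of $p$-modulus (a standard fact about $\Mod_p$, essentially Fuglede's original observation). Setting $\Gamma^*:=\bigcap_N\bigcup_{k\geq N}\Gamma_k$, one has
$$ \Mod_p(\Gamma^*)\leq \Mod_p\Big(\bigcup_{k\geq N}\Gamma_k\Big)\leq \sum_{k\geq N}2^{-kp}\xrightarrow{N\to\infty}0, $$
so $\Gamma^*$ is $p$-exceptional. For any $\gamma\notin\Gamma^*$ there exists $N=N(\gamma)$ such that $\int_\gamma |f_{i_k}-f|\,ds<2^{-k}$ for all $k\geq N$; in particular $\int_\gamma|f_{i_k}-f|\,ds\to 0$. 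A separate (and similar) modulus argument applied to $f\in\LB^p(X)$ — admissible function $\rho=|f|/\lambda$ on $\{\gamma:\int_\gamma|f|\,ds\geq \lambda\}$ and letting $\lambda\to\infty$ — shows $\int_\gamma|f|\,ds<\infty$ outside a further $p$-exceptional family, which we may add to $\Gamma^*$. Then the triangle inequality $|\int_\gamma f_{i_k}\,ds-\int_\gamma f\,ds|\leq \int_\gamma|f_{i_k}-f|\,ds$ yields convergence of the curve integrals for $\gamma\notin\Gamma^*$.

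The only genuine obstacle is the countable subadditivity of $\Mod_p$, but this is a classical and elementary property: given admissible $\rho_k$ for $\Gamma_k$ with $\int\rho_k^p\,d\mu\leq \Mod_p(\Gamma_k)+\varepsilon 2^{-k}$, the function $\rho=(\sum_k \rho_k^p)^{1/p}$ dominates each $\rho_k$ and hence is admissible for $\bigcup_k\Gamma_k$, giving $\Mod_p(\bigcup_k\Gamma_k)\leq\sum_k\Mod_p(\Gamma_k)+\varepsilon$. Everything else is bookkeeping with Chebyshev's inequality and the triangle inequality.
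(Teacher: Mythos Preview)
Your argument is correct and is essentially the classical proof; the paper itself does not give a proof but simply cites Fuglede's original article \cite{Fuglede1957}*{Theorem 3}, whose argument is exactly the rapid-subsequence plus Chebyshev/Borel--Cantelli scheme you outline.
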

Indeed, a simple consequence of this lemma is that the (convex) collection of $p$-weak upper gradients is closed under $L^p(X)$ convergence. If $f\in N^{1,p}(X)$, there exists a minimal $p$-weak upper gradient $g_f\in \LB^p_+(X)$ which is unique up to a set of measure zero, see \cite{Bjorn2011}*{Theorem 2.5}. By minimal we mean that $g_f\leq g$ $\mu$-a.e. for all $p$-weak upper gradients $g$ of $f$.

\begin{definition} \label{def:gradient-curve}
Let $f\in N^{1,p}(X)$. We say that a curve $\gamma\in\Curv(X)$ is a gradient curve of $f$ if $g_f$ is an upper gradient for $f$ along $\gamma$ and 
\begin{equation} \label{eq:gradient-curve-def}
    |f(\gamma(0))-f(\gamma(\ell_\gamma))|=\int_{\gamma}g_fds.
\end{equation}
\end{definition}

\begin{remark}\label{rmk:gradientcurve}
If $\gamma\in\Curv(X)$ is a gradient curve of $f\in N^{1,p}(X)$, then it follows that $f\circ\gamma$ is absolutely continuous and
$$ |f(\gamma(a))-f(\gamma(b))|=\int_{\gamma|_{[a,b]}}g_fds \quad\text{for all }0\leq a\leq b\leq \ell_\gamma. $$
Indeed, it follows from \eqref{eq:gradient-curve-def} that
\begin{align*}
    \int_{\gamma|_{[0,a]}}g_fds+\int_{\gamma|_{[a,b]}}g_fds
    +\int_{\gamma|_{[b,\ell_\gamma]}}g_fds
    \leq
    |f(\gamma(0))-f(\gamma(a))|
    +|f(\gamma(a))-f(\gamma(b))|
    +|f(\gamma(b)-f(\gamma(\ell_\gamma))|.
\end{align*}
Since $g_f$ is an upper gradient for $f$ along $\gamma$, we may estimate the left hand side from below by applying \eqref{eq:upper-gradient} to the sub-curves $\gamma|_{[0,a]}$ and $\gamma|_{[b,\ell_\gamma]}$ to obtain
\[
\int_{\gamma|_{[a,b]}}g_fds\leq |f(\gamma(a))-f(\gamma(b))| 
\]
from which the claim follows.
\end{remark}

\begin{definition}[\cite{Bjorn2011}*{Definition 7.7}] \label{def:p-harmonic}
Let $\Om\subset X$ be open and nonempty. A function $u\in N^{1,p}_{\loc}(\Om)$ is $p$-harmonic if it is continuous and if 
$$\int_{\varphi\neq0}g_u^pd\mu\leq\int_{\varphi\neq 0}g_{u+\varphi}^pd\mu $$
for all compactly supported Lipschitz continuous functions $\varphi\colon\Om\to\R$.
\end{definition}

Here, we assume that $u$ is also continuous, since otherwise $u$ may be modified on a zero capacity subset. With this definition, we avoid the need to discuss the choice of representatives. In general, for doubling spaces satisfying a $p$-Poincar\'e inequality, every $p$-harmonic function (without the continuity assumed a priori) has a  continuous representative; see e.g. \cite{Bjorn2011}*{Section 8}.
%------------------------------
\subsection{Space of rectifiable curves as a metric space}
For the purpose of this article we need to topologize the space of rectifiable curves $\Curv(X)$. We heavily employ the theory presented in \cite{Savare2022}*{Section 2.2}.

Let $\gamma\in\Curv(X)$ and define its unit-interval reparametrization
$R_\gamma\colon[0,1]\to X$ by
\[ R_\gamma(t):=\gamma(\ell(\gamma)t) \quad\text{for all }t\in[0,1]. \]
Note that $R_\gamma\in C([0,1];X)$. We equip $C([0,1];X)$ with the standard sup-metric
$$ d_C(R_1,R_2):=\sup_{t\in[0,1]}d(R_1(t),R_2(t)) \quad\text{for all }
R_1,R_2\in C([0,1];X). $$
Following \cite{Savare2022}*{Section 2.2.2} we define an equivalence relation on $C([0,1];X)$ as follows. Let 
$$ \Sigma:=\{\sigma\colon[0,1]\to[0,1]:
\sigma\text{ continuous, nondecreasing surjection}\}. $$
For $R_1,R_2\in C([0,1];X)$ we write $R_1\sim R_2$ if there exist $\sigma_1,\sigma_2\in\Sigma$ such that
$$ R_1\circ\sigma_1=R_2\circ\sigma_2. $$
By \cite{Savare2022}*{Corollary 2.2.5} relation $\sim$ is indeed an equivalence relation. We denote the equivalence class of $R\in C([0,1];X)$ by $[R]$. By \cite{Savare2022}*{Proposition 2.2.6}, the quotient space of arcs $A(X):=C([0,1];X)/\sim$ is a metric space with metric 
$$ d_A([R_1],[R_2]):=\inf_{\sigma_i\in\Sigma}d_C(R_1\circ\sigma_1,R_2\circ\sigma_2). $$
Moreover, the quotient topology of $A(X)$ coincides with the metric topology induced by $d_A$.

\begin{lemma} \label{lem:equivalence-classes-have-same-arc-length-reparam}
Rectifiable curves $R_1,R_2\in C([0,1];X)$ have the same arc-length reparametrization if and only if $R_1\sim R_2$.
\end{lemma}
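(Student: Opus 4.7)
The plan is to prove both implications directly from the identity $R_i = \tilde{R}_i \circ V_i$, where $V_i(t) := \ell(R_i|_{[0,t]})$ is the length function and $\tilde{R}_i$ is the arc-length reparametrization of $R_i$.

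For the implication $R_1 \sim R_2 \Rightarrow \tilde{R}_1 = \tilde{R}_2$, I would pick $\sigma_1, \sigma_2 \in \Sigma$ witnessing $R_1 \circ \sigma_1 = R_2 \circ \sigma_2$. Since length is invariant under continuous nondecreasing surjective reparametrizations, $\ell(R_1) = \ell(R_2) =: L$, and $V_i \circ \sigma_i$ is the length function of $R_i \circ \sigma_i$. Hence $V_1 \circ \sigma_1 = V_2 \circ \sigma_2$, and therefore
$$
\tilde{R}_1 \circ (V_1 \circ \sigma_1) = R_1 \circ \sigma_1 = R_2 \circ \sigma_2 = \tilde{R}_2 \circ (V_1 \circ \sigma_1).
$$
Since $V_1 \circ \sigma_1 : [0,1] \to [0,L]$ is surjective, this forces $\tilde{R}_1 = \tilde{R}_2$ on $[0,L]$.

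For the converse, assume $R_1, R_2$ share a common arc-length reparametrization $\tilde{R} : [0, L] \to X$, so that $R_i = \tilde{R} \circ V_i$. It suffices to find $\sigma_1, \sigma_2 \in \Sigma$ with $V_1 \circ \sigma_1 = V_2 \circ \sigma_2$, since then $R_1 \circ \sigma_1 = \tilde{R} \circ V_1 \circ \sigma_1 = \tilde{R} \circ V_2 \circ \sigma_2 = R_2 \circ \sigma_2$, yielding $R_1 \sim R_2$. This is a ``common refinement'' problem for the two continuous nondecreasing surjections $V_1, V_2 : [0,1] \to [0, L]$. Concretely, I would consider the monotone comparison set
$$
K := \{(s, t) \in [0,1]^2 : V_1(s) = V_2(t)\},
$$
which is closed and contains both $(0,0)$ and $(1,1)$; for each $v \in [0,L]$ the fiber is the rectangle $V_1^{-1}(v) \times V_2^{-1}(v)$, and these rectangles stack monotonely as $v$ increases. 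A continuous nondecreasing path $[0,1] \to K$ from $(0,0)$ to $(1,1)$ can be built by traversing each such rectangle along its horizontal and then vertical side and then moving diagonally to the next rectangle, after which one rescales by the total $\ell^1$ arc length so that the parameter lies in $[0,1]$. The two coordinate projections of this path give the required $\sigma_1, \sigma_2 \in \Sigma$.

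The main obstacle is ensuring the constructed $\sigma_i$ are genuinely continuous on all of $[0,1]$ even when $V_1$ or $V_2$ has countably many non-degenerate flat intervals; handling these flats uniformly by an $\ell^1$ arc-length rescaling of the path in $K$ is the technical heart of the argument. Alternatively, the existence of such a common refinement is already implicit in the proof that $\sim$ is an equivalence relation in \cite{Savare2022}*{Corollary 2.2.5}, so the $\Rightarrow$ direction can be obtained by appealing to that construction rather than redoing it here.
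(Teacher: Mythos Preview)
Your first implication ($R_1\sim R_2\Rightarrow\tilde R_1=\tilde R_2$) is essentially identical to the paper's: both compute that the length function of $R_i\circ\sigma_i$ equals $V_i\circ\sigma_i$, deduce $V_1\circ\sigma_1=V_2\circ\sigma_2$, and conclude $\tilde R_1=\tilde R_2$ by surjectivity of this map onto $[0,L]$.

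For the converse you work much harder than the paper does. The paper avoids the common-refinement problem for $V_1,V_2$ entirely by passing through the intermediate curve $R_\gamma$, the unit-interval reparametrization of the shared arc-length curve $\gamma$. Since $R_i=\gamma\circ V_i$ and $R_\gamma=\gamma\circ s$ with $s(t)=\ell t$, one has $R_i=R_\gamma\circ(s^{-1}\circ V_i)$ with $s^{-1}\circ V_i\in\Sigma$; hence $R_i\sim R_\gamma$ for $i=1,2$ with one of the two reparametrizations equal to the identity, and transitivity of $\sim$ (already cited from \cite{Savare2022}) gives $R_1\sim R_2$. Your direct construction of a monotone path through $K=\{(s,t):V_1(s)=V_2(t)\}$ is correct in principle and, as you yourself observe, amounts to re-proving the transitivity step from \cite{Savare2022}; but it introduces precisely the technical issue with countably many flat intervals that you flag as the main obstacle. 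The paper's route sidesteps this completely at the price of a single appeal to transitivity, so there is no need to build the path in $K$ at all.
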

\begin{proof}
"$\Longrightarrow$": Let $\gamma$ denote the arc-length reparametrization of $R_1$ and $R_2$. We may assume that $\ell:=\ell(\gamma)>0$, because otherwise $\gamma$ is a constant curve, and consequently $R_1=R_2$.
We prove that $R_1\sim R_\gamma$, and an analogous argument shows that $R_2\sim R_\gamma$. Then $R_1\sim R_2$.

Since $\gamma$ is the arc-length reparametrization of $R_1$, we have $R_1=\gamma\circ V$, where $V\colon [0,1]\to[0,\ell]$ is the length function of $R_1$.
On the other hand, $R_\gamma=\gamma\circ s$, where $s\colon[0,1]\to[0,\ell]$ is the scaling $s(t)=\ell t$. Then
$$ R_1=R_\gamma\circ s^{-1}\circ V. $$
Since $s^{-1}\circ V\in\Sigma$, we conclude that $R_1\sim R_\gamma$.

"$\Longleftarrow$": Suppose that $R_1\sim R_2$. Then $R_1\circ\sigma_1=R_2\circ\sigma_2$ for some $\sigma_1,\sigma_2\in\Sigma$. 
It suffices to show that $R_1\circ\sigma_1$ and $R:=R_2\circ\sigma_2$ have the same arc length reparametrization.

Let $\gamma_1$ be the arc-length reparametrization of $R_1$ and $\gamma$ be that of $R$. Then
$$ R_1=\gamma_1\circ V_1\quad\text{and}\quad R=\gamma\circ V $$
where $V_1\colon[0,1]\to[0,\ell]$ and $V\colon[0,1]\to[0,\ell]$ are the length functions of $R_1$ and $R$, respectively and $\ell:=\ell(R_1)=\ell(R)$.
Thus
$$ \gamma_1\circ V_1\circ\sigma_1=\gamma\circ V. $$
We check that $V=V_1\circ\sigma_1$, and thus $\gamma_1=\gamma$. Indeed, for any $t\in[0,1]$
\begin{align*}
    V(t)
    &=\ell((R_1\circ\sigma_1)|_{[0,t]}) \\
    &=\sup\Big\{\sum_{i=0}^{n-1}d(R_1(\sigma_1(t_i)),R_1(\sigma_1(t_{i+1}))):n\in\N,0=t_0\leq\ldots\leq t_n=t\Big\} \\
    &=\sup\Big\{\sum_{i=0}^{n-1}d(R_1(s_i),R_1(s_{i+1})):n\in\N, 0=s_0\leq \ldots\leq s_n=\sigma_1(t)\Big\} \\
    &=V_1(\sigma_1(t)).
\end{align*}
Above we used the fact that $\sigma_1$ is a continuous nondecreasing surjection $[0,1]\to[0,1]$.
\end{proof}

With the aid of Lemma \ref{lem:equivalence-classes-have-same-arc-length-reparam} we can see that the mapping $\gamma\mapsto [R_\gamma]$ yields an injection $\Curv(X)\to A(X)$ and we can define a metric on $\Curv(X)$ by
\begin{equation} \label{eq:d-Curv-def}
    d_{\Curv}(\gamma_1,\gamma_2):=
    d_A([R_{\gamma_1}],[R_{\gamma_2}])
    \quad\text{for all }\gamma_1,\gamma_2\in\Curv(X).
\end{equation}
%------------------------------
\subsection{Properties of \texorpdfstring{$(\Curv(X),d_{\Curv})$}{CurvX}}
Here, we list certain continuity properties of maps on $(\Curv(X),d_{\Curv})$ and compactness properties of space $(\Curv(X),d_{\Curv})$.

\begin{lemma} \label{lem:regularity-of-various-maps-on-Curv}
We have the following regularity properties of various functions on $\Curv(X)$:
\begin{enumerate}
    \item Function $\ell\colon\Curv(X)\to[0,\infty)$ is lower semi-continuous.
    \item If $\rho\colon X\to[0,\infty]$ is lower semi-continuous then the function $\gamma\mapsto \int_\gamma\rho ds$ is lower semi-continuous on $\Curv(X)$. 
    \item If $\rho\in\Bor_+(X)$, then the function $\gamma\mapsto \int_\gamma\rho ds$ is Borel on $\Curv(X)$.
    \item Endpoint evaluation maps
    \begin{align*}
        \begin{cases}
        e_0\colon\Curv(X)\to X,&\quad e_0(\gamma):=\gamma(0), \\
        e_\ell\colon\Curv(X)\to X,&\quad e_\ell(\gamma):=\gamma(\ell_\gamma),
        \end{cases}
    \end{align*}
    are continuous.
\end{enumerate}
\end{lemma}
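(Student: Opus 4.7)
The plan is to handle the four items largely independently, working from the concrete description of $d_{\Curv}$. Whenever $\gamma_n \to \gamma$ in $d_{\Curv}$, by definition of $d_A$ there exist sequences $\sigma_n,\tau_n\in\Sigma$ with
\[
\varepsilon_n := \sup_{t\in[0,1]} d\bigl(R_{\gamma_n}(\sigma_n(t)),\, R_\gamma(\tau_n(t))\bigr) \longrightarrow 0.
\]
Item (4) is then essentially immediate: every $\sigma\in\Sigma$ is a nondecreasing surjection of $[0,1]$ onto itself, hence fixes $0$ and $1$; evaluating the above at $t=0$ and $t=1$ gives $d(e_0(\gamma_n), e_0(\gamma))\leq\varepsilon_n$ and $d(e_\ell(\gamma_n), e_\ell(\gamma))\leq\varepsilon_n$.

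For item (1), I use the partition characterization of length. Given a partition $0=s_0<\cdots<s_N=\ell_\gamma$ of $[0,\ell_\gamma]$, pick any $t_i^{(n)}\in\tau_n^{-1}(s_i/\ell_\gamma)$. Since $\tau_n$ is a nondecreasing surjection these points can be chosen so that $0=t_0^{(n)}\leq\cdots\leq t_N^{(n)}=1$, and hence $\{\sigma_n(t_i^{(n)})\}$ is a partition of $[0,1]$. By the triangle inequality combined with the $\varepsilon_n$-closeness,
\[
\ell(\gamma_n) \geq \sum_{i=0}^{N-1} d\bigl(R_{\gamma_n}(\sigma_n(t_i^{(n)})), R_{\gamma_n}(\sigma_n(t_{i+1}^{(n)}))\bigr) \geq \sum_{i=0}^{N-1} d(\gamma(s_i),\gamma(s_{i+1})) - 2N\varepsilon_n.
\]
Taking $\liminf$ in $n$ and then sup over partitions yields $\liminf \ell(\gamma_n) \geq \ell(\gamma)$.

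For item (2), I reduce to the special case $\rho = \mathbbm{1}_U$ with $U\subset X$ open. By the layer-cake identity
\[
\int_\gamma\rho\,ds = \int_0^\infty \lambda\bigl(\tilde\gamma^{-1}(\{\rho>t\})\bigr)\,dt,
\]
with $\lambda$ Lebesgue measure on $[0,\ell_\gamma]$, and Fatou in $t$, it suffices to show that $\gamma\mapsto\lambda(\tilde\gamma^{-1}(U))$ is lsc for every open $U$ (truncating to $\min(\rho,n)$ and applying monotone convergence handles possibly infinite $\rho$). For such $U$, $\tilde\gamma^{-1}(U)$ is open in $[0,\ell_\gamma]$, and $\lambda(\tilde\gamma^{-1}(U))$ equals the supremum over finite families of disjoint closed intervals $[a_j,b_j]\subset\tilde\gamma^{-1}(U)$ of $\sum_j(b_j-a_j)$. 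Each $\tilde\gamma([a_j,b_j])$ is a compact subset of the open set $U$ and so at positive distance $\delta_j>0$ from $X\setminus U$. Running the partition argument from item (1) \emph{inside} each $[a_j,b_j]$ (using the $\tau_n$-preimages of $a_j/\ell_\gamma$ and $b_j/\ell_\gamma$), once $\varepsilon_n<\min_j\delta_j$, produces sub-curves of $\gamma_n$ whose traces lie in $U$ and whose lengths sum to at least $\sum_j(b_j-a_j) - o(1)$; passing to the $\liminf$ and then taking the sup over families gives the lower semi-continuity.

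Item (3) follows by a functional monotone class argument. Let $\mathcal{H}$ be the set of bounded Borel $\rho\colon X\to\R$ such that $\gamma\mapsto\int_\gamma\rho\,ds$ is Borel. Linearity makes $\mathcal{H}$ a vector space; item (1) puts $1\in\mathcal{H}$ (since $\ell$ is lsc hence Borel); item (2) gives $\mathbbm{1}_U\in\mathcal{H}$ for every open $U$; and for $0\leq\rho_k\nearrow\rho$ uniformly bounded, monotone convergence pointwise in $\gamma$ shows $\mathcal{H}$ closed under bounded monotone limits. Since the open sets form a $\pi$-system generating the Borel $\sigma$-algebra, the monotone class theorem yields $\mathcal{H}$ equals the class of all bounded Borel functions. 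For a general $\rho\in\Bor_+(X)$, $\gamma\mapsto\int_\gamma\min(\rho,n)\,ds$ is Borel by the above, and its monotone increasing limit $\gamma\mapsto\int_\gamma\rho\,ds$ is Borel as well. The hard part of the plan will be the lsc of $\gamma\mapsto\lambda(\tilde\gamma^{-1}(U))$ for open $U$, where one has to combine the partition lower bound from (1) with a careful argument that the approximating sub-curves of $\gamma_n$ stay inside $U$; everything else is bookkeeping.
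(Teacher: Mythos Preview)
Your proof is correct, but takes a genuinely different route from the paper. The paper's proof is essentially one line: each item is obtained by citing the corresponding result in \cite{Savare2022}, where the analogous statements are proved for the space of arcs $(A(X),d_A)$, and then transferred to $(\Curv(X),d_{\Curv})$ via the isometric embedding $\gamma\mapsto[R_\gamma]$. Your argument is instead fully self-contained: you unwind the definition of $d_A$, choose near-optimal reparametrizations $\sigma_n,\tau_n$, and prove each item directly with a partition/triangle-inequality estimate (items (1),(2),(4)) and a functional monotone class argument (item (3)). The paper's approach is concise and leverages existing machinery, but requires the reader to consult an external reference; your approach is longer but gives an explicit, elementary proof with no black boxes. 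One small remark: in item (2) the sentence ``running the partition argument from item (1) inside each $[a_j,b_j]$ \ldots\ produces sub-curves of $\gamma_n$ whose lengths sum to at least $\sum_j(b_j-a_j)-o(1)$'' packs in several steps (identifying the sub-curves $R_{\gamma_n}|_{[\sigma_n(t_{a_j}^{(n)}),\sigma_n(t_{b_j}^{(n)})]}$, checking their images lie in $U$ once $\varepsilon_n<\min_j\delta_j$, checking disjointness of the corresponding arc-length intervals, and applying the partition lower bound to each sub-curve separately before using superadditivity of $\liminf$ over the finite sum). The logic is sound, but if you expand this into a full proof you should spell these steps out.
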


\begin{proof}
We employ the analogous results of \cite{Savare2022} for functions defined on $(A(X),d_A)$. The definition of $d_{\Curv}$ via $d_A$ in \eqref{eq:d-Curv-def} allows us to state these results for functions defined on $(\Curv(X),d_{\Curv})$.
\begin{enumerate}
    \item Follows from \cite{Savare2022}*{Lemma 2.2.11(d)}.
    \item Follows from \cite{Savare2022}*{Lemma 2.2.11(e)}.
    \item Follows from \cite{Savare2022}*{Theorem 2.2.13(e)}.
    \item Follows from \cite{Savare2022}*{Lemma 2.2.11(d)}.
\end{enumerate}
\end{proof}

\begin{proposition} \label{prop:compactness}
Suppose that $X$ is compact and $\Gamma\subset\Curv(X)$ is a family of curves whose lengths are uniformly bounded from above. Then $\Gamma$ is precompact.
\end{proposition}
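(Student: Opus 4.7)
The plan is to reduce to an Arzelà-Ascoli argument on the path space $C([0,1];X)$ and then descend to the quotient.

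First I would observe that the uniform length bound translates into uniform Lipschitz continuity of the unit-interval reparametrizations. Concretely, let $L<\infty$ be such that $\ell_\gamma \leq L$ for every $\gamma\in\Gamma$. Since each $\gamma$ is arc-length parametrized, the map $[0,\ell_\gamma]\to X$ is $1$-Lipschitz, and therefore $R_\gamma(t)=\gamma(\ell_\gamma t)$ satisfies
\[
d(R_\gamma(s),R_\gamma(t))\leq \ell_\gamma|s-t|\leq L|s-t|
\qquad\text{for all } s,t\in[0,1].
\]
Thus $\{R_\gamma:\gamma\in\Gamma\}\subset C([0,1];X)$ is a uniformly Lipschitz, and in particular equicontinuous and uniformly bounded (since $X$ is compact), family. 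By the Arzelà-Ascoli theorem, this family is precompact in $(C([0,1];X),d_C)$.

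Next I would transfer this to $A(X)$. Taking $\sigma_1=\sigma_2=\mathrm{id}$ in the definition of $d_A$ shows that the quotient map $C([0,1];X)\to A(X)$, $R\mapsto[R]$, is $1$-Lipschitz, so $\{[R_\gamma]:\gamma\in\Gamma\}$ is precompact in $A(X)$. Given a sequence $(\gamma_k)\subset\Gamma$, I extract a subsequence, still denoted $(\gamma_k)$, such that $R_{\gamma_k}\to R$ uniformly for some $R\in C([0,1];X)$.

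To conclude, I need the limit to come from a curve in $\Curv(X)$. The uniform Lipschitz bound passes to $R$, so $R$ is $L$-Lipschitz, hence has finite length at most $L$; in particular $R$ is rectifiable and admits an arc-length reparametrization $\gamma\in\Curv(X)$. Lemma \ref{lem:equivalence-classes-have-same-arc-length-reparam} then gives $R\sim R_\gamma$, i.e.\ $[R]=[R_\gamma]$, and therefore
\[
d_{\Curv}(\gamma_k,\gamma)=d_A([R_{\gamma_k}],[R_\gamma])=d_A([R_{\gamma_k}],[R])\to 0.
\]

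The only delicate step is the last one: one must verify that the $A(X)$-limit of the parametrized representatives $[R_{\gamma_k}]$ actually lies in the image of the embedding $\gamma\mapsto[R_\gamma]$ of $\Curv(X)$ into $A(X)$. The Lipschitz bound on the representatives together with Lemma \ref{lem:equivalence-classes-have-same-arc-length-reparam} handles this cleanly, so no additional tightness argument is needed and the proof is short.
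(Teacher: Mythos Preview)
Your proof is correct and follows essentially the same approach as the paper: pass to the unit-interval reparametrizations, apply Arzel\`a--Ascoli using the uniform Lipschitz bound, then use Lemma~\ref{lem:equivalence-classes-have-same-arc-length-reparam} to identify the limit as $[R_\gamma]$ for some $\gamma\in\Curv(X)$. The only cosmetic difference is that the paper verifies rectifiability of $R$ via lower semicontinuity of length rather than by passing the Lipschitz bound to the limit, but these are equivalent here.
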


\begin{proof}
Let $(\gamma_n)\subset\Gamma$ be a sequence of curves. Consider the sequence of their unit-length reparametrizations, $(R_{\gamma_n})\subset C([0,1];X)$. Each $R_{\gamma_n}$ is $\ell(\gamma_n)$-Lipschitz, and $\ell(\gamma_n)\leq C$ for all $n\in\N$ and for some constant $C>0$.
Since $X$ is compact, we may apply Arzel\`a-Ascoli theorem to find a subsequence of $(R_{\gamma_n})$, still denoted by itself, and a curve $R\in C([0,1];X)$ such that 
$$ d_C(R_{\gamma_n},R)\xrightarrow{n\to\infty}0. $$
Moreover, $R$ is rectifiable, because
$$ \ell(R)\leq \liminf_{n\to\infty}\ell(\gamma_n)\leq C<\infty. $$
We conclude that $R$ has a unique arc-length reparametrization $\gamma\colon [0,\ell(R)]\to X$. We claim that $\gamma_n\xrightarrow{n\to\infty}\gamma$ in $\Curv(X)$. Indeed,
\begin{align*}
    d_{\Curv}(\gamma_n,\gamma)
    =d_A([R_{\gamma_n}],[R_\gamma])
    =d_A([R_{\gamma_n}],[R])
    \leq d_C(R_{\gamma_n},R)\xrightarrow{n\to\infty}0.
\end{align*}
Above we employed the fact that $R_\gamma$ and $R$ have the same arc-length reparametrization, and thus $R_\gamma\sim R$ by Lemma \ref{lem:equivalence-classes-have-same-arc-length-reparam}.
\end{proof}

\begin{lemma} \label{lem:curves-with-bdd-length-compact}
If $X$ is compact and $C>0$ is a constant, then $\{\gamma\in\Curv(X):\ell(\gamma)\leq C\}$ is compact.
\end{lemma}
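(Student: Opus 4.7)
The plan is to deduce this from the two results immediately preceding it. Proposition \ref{prop:compactness} already gives us precompactness of $\Gamma_C := \{\gamma \in \Curv(X) : \ell(\gamma) \leq C\}$, since lengths are uniformly bounded by $C$. What remains is closedness, and this is exactly the content of Lemma \ref{lem:regularity-of-various-maps-on-Curv}(1).

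More precisely, I would argue as follows. Let $(\gamma_n) \subset \Gamma_C$ be an arbitrary sequence. By Proposition \ref{prop:compactness}, there is a subsequence $(\gamma_{n_k})$ and some $\gamma \in \Curv(X)$ with $d_{\Curv}(\gamma_{n_k}, \gamma) \to 0$. To conclude that $\gamma \in \Gamma_C$, I invoke Lemma \ref{lem:regularity-of-various-maps-on-Curv}(1), which states that $\ell \colon \Curv(X) \to [0,\infty)$ is lower semi-continuous. Therefore
$$ \ell(\gamma) \leq \liminf_{k \to \infty} \ell(\gamma_{n_k}) \leq C, $$
so $\gamma \in \Gamma_C$. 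Since every sequence in $\Gamma_C$ admits a convergent subsequence with limit in $\Gamma_C$, the set $\Gamma_C$ is sequentially compact, hence compact in the metric space $(\Curv(X), d_{\Curv})$.

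There is no real obstacle here: the work was already carried out in Proposition \ref{prop:compactness} (the Arzelà–Ascoli argument on unit-interval reparametrizations) and in Lemma \ref{lem:regularity-of-various-maps-on-Curv}(1) (lower semi-continuity of length, ultimately inherited from \cite{Savare2022}). The lemma is essentially a packaging statement combining precompactness with closedness of the sublevel set $\{\ell \leq C\}$.
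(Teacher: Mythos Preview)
Your proof is correct and follows exactly the paper's approach: the paper's proof simply states that the set is closed by Lemma \ref{lem:regularity-of-various-maps-on-Curv} and precompact by Proposition \ref{prop:compactness}, hence compact. You have unpacked this into an explicit sequential compactness argument, but the underlying reasoning is identical.
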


\begin{proof}
The set $\{\gamma\in\Curv(X):\ell(\gamma)\leq C\}$ is closed by Lemma \ref{lem:regularity-of-various-maps-on-Curv} and precompact by Proposition \ref{prop:compactness}. Hence it is compact. 
\end{proof}

\begin{corollary}
If $X$ is compact, then $\Curv(X)$ is $\sigma$-compact.
\end{corollary}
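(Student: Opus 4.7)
The plan is to write $\Curv(X)$ as a countable union of compact sets by stratifying curves according to their length. Concretely, I would define, for each $n \in \N$, the sublevel set
\[
\Gamma_n := \{\gamma \in \Curv(X) : \ell(\gamma) \leq n\},
\]
and observe that $\Curv(X) = \bigcup_{n \in \N} \Gamma_n$ because every rectifiable curve has finite length by definition.

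Next, I would invoke Lemma \ref{lem:curves-with-bdd-length-compact} (which was just proved using lower semicontinuity of $\ell$ from Lemma \ref{lem:regularity-of-various-maps-on-Curv}(1) together with the Arzelà--Ascoli-based precompactness Proposition \ref{prop:compactness}) to conclude that each $\Gamma_n$ is compact in $(\Curv(X), d_{\Curv})$. Since $\Curv(X)$ is then exhibited as a countable union of compact subsets, it is $\sigma$-compact by definition.

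There is no real obstacle here; the statement is essentially a direct corollary of the preceding lemma, and the only thing to verify is that the countable exhaustion by integer length bounds covers all rectifiable curves, which is immediate since $\ell(\gamma) < \infty$ for every $\gamma \in \Curv(X)$.
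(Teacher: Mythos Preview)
Your proposal is correct and matches the paper's intended argument: the paper does not even write out a proof for this corollary, since it follows immediately from Lemma~\ref{lem:curves-with-bdd-length-compact} via the exhaustion $\Curv(X)=\bigcup_{n\in\N}\{\gamma:\ell(\gamma)\leq n\}$ that you describe.
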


Given a curve $\gamma\colon[a,b]\to X$, we define the diameter of its image as
$$ \operatorname{diam}(\gamma):=\operatorname{diam}(\gamma([a,b]))=\sup\{d(\gamma(t),\gamma(s)):t,s\in[a,b]\}. $$
Similarly to length, it is clear that diameter is independent of the reparametrization of a curve. In particular, if $R_1,R_2\in C([0,1];X)$ and $R_1\sim R_2$, then $\operatorname{diam}(R_1)=\operatorname{diam}(R_2)$.

\begin{lemma} \label{lem:diam-continuous}
Diameter function $\operatorname{diam}\colon\Curv(X)\to[0,\infty)$ is continuous.
\end{lemma}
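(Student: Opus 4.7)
The plan is to show that $\operatorname{diam}$ is in fact $2$-Lipschitz with respect to $d_{\Curv}$, which of course implies continuity. Given curves $\gamma_1,\gamma_2\in\Curv(X)$, I unwind the definition of $d_{\Curv}$ via $d_A$, so it suffices to work with the unit-interval representatives $R_{\gamma_1},R_{\gamma_2}\in C([0,1];X)$ and arbitrary reparametrizations $\sigma_1,\sigma_2\in\Sigma$.

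Fix $\varepsilon>0$ and pick $\sigma_1,\sigma_2\in\Sigma$ with $d_C(R_{\gamma_1}\circ\sigma_1, R_{\gamma_2}\circ\sigma_2)<d_{\Curv}(\gamma_1,\gamma_2)+\varepsilon$. Set $\delta:=d_{\Curv}(\gamma_1,\gamma_2)+\varepsilon$. For any $s,t\in[0,1]$ the triangle inequality gives
\[
d\bigl(R_{\gamma_1}(\sigma_1(t)),R_{\gamma_1}(\sigma_1(s))\bigr)\le 2\delta + d\bigl(R_{\gamma_2}(\sigma_2(t)),R_{\gamma_2}(\sigma_2(s))\bigr).
\]
Since $\sigma_1,\sigma_2\colon[0,1]\to[0,1]$ are surjections, the image of $\sigma_i$ is all of $[0,1]$, so taking the supremum over $s,t$ yields $\operatorname{diam}(R_{\gamma_1})\le 2\delta+\operatorname{diam}(R_{\gamma_2})$. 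Here I use the observation, noted just before the statement of the lemma, that $\operatorname{diam}$ is invariant under reparametrization so that $\operatorname{diam}(R_{\gamma_i}\circ\sigma_i)=\operatorname{diam}(R_{\gamma_i})=\operatorname{diam}(\gamma_i)$. Exchanging the roles of $\gamma_1,\gamma_2$ and letting $\varepsilon\to 0$ gives
\[
\bigl|\operatorname{diam}(\gamma_1)-\operatorname{diam}(\gamma_2)\bigr|\le 2\, d_{\Curv}(\gamma_1,\gamma_2),
\]
which is the desired Lipschitz bound.

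There is essentially no obstacle here: the only subtle point is checking that one may take suprema over $[0,1]$ in the variables $\sigma_i(t),\sigma_i(s)$, which is exactly the content of $\sigma_i$ being a surjection from $\Sigma$. The fact that the bound depends on a constant (namely $2$, not $1$) comes from the two triangle inequality applications, one at each endpoint of a diameter-realizing pair.
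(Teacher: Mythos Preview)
Your proof is correct and in fact yields a stronger conclusion than the paper's: you obtain a quantitative $2$-Lipschitz bound for $\operatorname{diam}$ with respect to $d_{\Curv}$, whereas the paper only establishes continuity. The paper's argument proceeds more indirectly through the quotient structure: it first shows $\operatorname{diam}\colon C([0,1];X)\to\R$ is continuous with respect to $d_C$ (lower semicontinuity from writing it as a supremum of continuous functions, upper semicontinuity via an $\epsilon$-neighborhood argument), then observes that $\operatorname{diam}$ descends to a well-defined function on the quotient $A(X)$ since it is reparametrization-invariant, invokes the fact that the quotient topology on $A(X)$ agrees with the $d_A$-metric topology to get continuity there, and finally pulls back to $\Curv(X)$ via the definition of $d_{\Curv}$. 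Your approach bypasses this chain entirely by working directly with near-optimal reparametrizations $\sigma_1,\sigma_2$ in the infimum defining $d_A$, applying the triangle inequality pointwise, and using surjectivity of the $\sigma_i$ to recover the full diameter on both sides. The gain is a cleaner, self-contained argument with an explicit constant; the paper's route, while longer, illustrates how properties on $C([0,1];X)$ transfer to $\Curv(X)$ through the quotient machinery, a pattern it relies on elsewhere.
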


\begin{proof}
We begin by proving that $\operatorname{diam}\colon C([0,1];X)\to \R$ is continuous with respect to $d_C$. First note that it is lower semi-continuous as a supremum over continuous functions $\gamma\mapsto d(\gamma(t),\gamma(s))$. For the upper semi-continuity, fix $R\in C([0,1];X)$ and let $(R_n)$ be a sequence of curves such that $R_n\to R$ in $C([0,1];X)$, i.e. $d_C(R_n,R)\xrightarrow{n\to\infty}0$. 
Given $\epsilon>0$, there exists $N\in \N$ such that $n\geq N$ implies that 
$$ R_n([0,1])\subset B(R([0,1]),\epsilon). $$
Here we denote $B(A,\epsilon):=\{x\in X:d(x,A)<\epsilon\}$ for any $A\subset X$. Note that if $A\subset X$, then
$$ \operatorname{diam}(B(A,\epsilon))\leq \operatorname{diam}(A)+2\epsilon. $$
We conclude that 
$$ \operatorname{diam}(R_n)\leq\operatorname{diam}(B(R([0,1]),\epsilon))\leq \operatorname{diam}(R)+2\epsilon. $$
It follows that
$$ \limsup_{n\to\infty}\operatorname{diam}(R_n)\leq\operatorname{diam}(R)+2\epsilon. $$
for any $\epsilon>0$ and thus $\operatorname{diam}$ is upper semi-continuous with respect to $d_C$.

Next, we show that $\operatorname{diam}\colon A(X)\to \R$ is continuous with respect to $d_A$. Note that $\operatorname{diam}$ is well-defined on $A(X)$, because it is independent of the equivalence class. Since the quotient topology of $A(X)$ agrees with the metric topology induced by $d_A$, we conclude that $\operatorname{diam}\colon A(X)\to \R$ is continuous. Finally, the continuity of $\operatorname{diam}\colon\Curv(X)\to\R$ follows from the continuity of $\operatorname{diam}\colon A(X)\to\R$ and the definition of $d_{\Curv}$. 
\end{proof}
%------------------------------
\subsection{Borel measures and the space of Radon measures}

Let $Y$ be a metric space. We point out that in this paper either $Y=X$ or $Y\subset\Curv(X)$. Let $\eta$ be a (nonnegative complete) Borel measure on $Y$. If $A\subset Y$ is Borel, the restriction of $\eta$ into $A$ is given by
\[ (\eta\resmes A)(E):=\eta(A\cap E) \quad\text{for all }E\subset Y\text{ Borel (or measurable).} \]
The restricted measure $\eta\resmes A$ is a Borel measure on $A$.
Conversely, if $A\subset Y$ is Borel and $\eta$ is a Borel measure defined on $A$, we may extend $\eta$ into the whole of $Y$ by zero extension,
$$ \eta'(E):=\eta(A\cap E)\quad\text{for all }E\subset \text{ Borel (or measurable),} $$
and $\eta'$ is a Borel measure on $Y$. Throughout this article, whenever a Borel measure $\eta$ is defined on a subset $A\subset Y$ we write simply
$$ \int_Yhd\eta:=\int_Yhd\eta'=\int_Ahd\eta
\quad\text{for all }h\in\Bor_+(Y) $$
by abuse of notation. In particular $\int_Yhd(\eta\resmes A)=\int_Ahd\eta$.

The space of finite, signed Radon measures on $Y$ is denoted by $\M(Y)$ and $\M_+(Y)$ stands for finite, nonnegative Radon measures on $Y$. Note that all Radon measures are also Borel. The weak topology on $\M(Y)$ is defined to be the coarsest topology such that 
$$ \eta\mapsto\int_Yhd\eta $$
is continuous for all bounded $h\in C(Y)$.
We say that a sequence $(\eta_n)$ of Radon measures in $\M_+(Y)$ converges weakly to $\eta\in\M_+(Y)$ if
$$ \int_Yhd\eta_n\xrightarrow{n\to\infty}\int_Yhd\eta\quad\text{for all bounded }h\in C(Y). $$

Prokhorov's theorem \cite{Bogachev}*{Theorem 8.6.7} is important for us. A collection $\mathcal{K}\subset\M_+(Y)$ is uniformly bounded if $\sup_{\eta \in \mathcal{K}}\eta(Y)<\infty$ and it is called equi-tight, if for every $\epsilon>0$ there exists a compact set $K_\epsilon \subset Y$ for which $\eta(Y\setminus K_\epsilon)<\epsilon$ for all $\eta \in \mathcal{K}$.

\begin{theorem}[Prokhorov] \label{thm:prokhorov} Let $Y$ be a complete separable metric space. 
Let $\mathcal{K}\subset\M_+(Y)$ be a collection of Radon measures that is uniformly bounded and equi-tight. Then $\mathcal{K}$ is  pre-compact in $\M_+(Y)$ with respect to the weak topology.
\end{theorem}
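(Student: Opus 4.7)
The plan is to combine Banach–Alaoglu on each compact piece of $Y$ with a diagonal extraction, and then glue the resulting partial limits into a Radon measure on $Y$ using the equi-tightness hypothesis.

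First, using equi-tightness, I would choose a nested sequence of compact sets $K_1 \subset K_2 \subset \cdots \subset Y$ with $\sup_{\eta\in\mathcal{K}}\eta(Y\setminus K_n) \leq 1/n$, and set $M := \sup_{\eta\in\mathcal{K}} \eta(Y) < \infty$. Given any sequence $(\eta_j) \subset \mathcal{K}$, the restricted family $\{\eta_j|_{K_n}\}_j$ is a norm-bounded subset of $C(K_n)^*$ via the Riesz representation theorem; since $K_n$ is a compact metric space, $C(K_n)$ is separable and so the closed ball of $C(K_n)^*$ is weak-$*$ sequentially compact. Thus a subsequence of $(\eta_j|_{K_n})$ converges weakly on $K_n$ to some $\nu_n \in \M_+(K_n)$. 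A standard diagonal extraction then yields a single subsequence (still written $\eta_j$) along which $\eta_j|_{K_n} \rightharpoonup \nu_n$ on $K_n$ for every $n \in \N$.

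The main obstacle is gluing the partial limits $(\nu_n)$ into a single Radon measure $\eta^*$ on $Y$, because the expected compatibility $\nu_n|_{K_m} = \nu_m$ for $m<n$ is not immediate: $K_m$ is only closed in $K_n$, so weak-$*$ convergence merely gives upper semi-continuity of measures of closed sets. I would overcome this by exploiting the uniform bound $\nu_n(K_n\setminus K_m) \leq \liminf_j \eta_j(K_n\setminus K_m) \leq 1/m$, which forces near-consistency up to an error that vanishes as $m \to \infty$. Concretely, I would define $\eta^*(B) := \lim_{n\to\infty} \nu_n(B \cap K_n)$ for Borel $B \subset Y$; the sequence is monotone-up-to-$1/m$ by the above estimate, hence convergent. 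A routine verification shows that $\eta^*$ is a finite Borel measure with $\eta^*(Y) \leq M$ and $\eta^*(Y\setminus K_n) \leq 1/n$, so it is tight and concentrated on the $\sigma$-compact set $\bigcup_n K_n$; inner regularity on all Borel sets then promotes $\eta^*$ to a Radon measure.

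Finally, for the weak convergence $\eta_j \rightharpoonup \eta^*$ in $\M_+(Y)$, I would apply an $\varepsilon/3$ argument. Given $f \in C_b(Y)$ and $\varepsilon>0$, choose $n$ with $\|f\|_\infty/n < \varepsilon/3$. Equi-tightness bounds $|\int_{Y\setminus K_n} f\,d\eta_j|$ and $|\int_{Y\setminus K_n} f\,d\eta^*|$ each by $\varepsilon/3$, while weak convergence of $\eta_j|_{K_n} \rightharpoonup \nu_n$ on the compact space $K_n$ (applied to $f|_{K_n} \in C(K_n)$) yields $\int_{K_n} f\,d\eta_j \to \int_{K_n} f\,d\nu_n = \int_{K_n} f\,d\eta^*$ as $j \to \infty$. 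Combining these estimates gives $\int f\,d\eta_j \to \int f\,d\eta^*$. The hardest step is the gluing/consistency part, which is where the Polish assumption on $Y$ and the equi-tightness interact most delicately.
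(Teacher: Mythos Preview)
The paper does not actually prove Prokhorov's theorem: it is stated as Theorem~\ref{thm:prokhorov} with a citation to \cite{Bogachev}*{Theorem 8.6.7} and then used as a black box. So there is no ``paper's own proof'' to compare against; your sketch is a genuine attempt at a proof where the paper simply quotes the literature.

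Your overall strategy---exhaust by compacta coming from equi-tightness, extract diagonal subsequential limits via Banach--Alaoglu on each $C(K_n)^*$, then glue---is one of the standard routes and the $\varepsilon/3$ argument at the end is fine. The weak point is the gluing step. Defining $\eta^*(B):=\lim_{n\to\infty}\nu_n(B\cap K_n)$ for arbitrary Borel $B$ is not obviously well-defined, and even if the limit exists pointwise there is no automatic reason for the resulting set function to be countably additive: pointwise limits of measures on a $\sigma$-algebra need not be measures. Your ``monotone-up-to-$1/m$'' remark gives a uniform Cauchy estimate for $n\mapsto\nu_n(B\cap K_n)$, which does force convergence, but you still have to produce countable additivity, and that requires an extra argument (e.g.\ checking tightness of the limiting set function and invoking a version of Vitali--Hahn--Saks, or avoiding set-wise limits altogether).

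A cleaner way to close the gap, which is what most textbook proofs do, is to avoid gluing on Borel sets entirely: either (i) embed the Polish space $Y$ homeomorphically into a compact metric space $\overline{Y}$ (e.g.\ the Hilbert cube), apply Banach--Alaoglu once on $C(\overline{Y})^*$, and then use equi-tightness and the Portmanteau inequality for open sets to show the limit measure is concentrated on $Y$; or (ii) show directly that $f\mapsto\lim_j\int f\,d\eta_j$ defines a tight positive linear functional on $C_b(Y)$ and invoke a Riesz-type representation. Either of these replaces your delicate set-wise construction with a single compactness step.
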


If $Y$ is a compact metric space, then the dual of the space of continuous functions on $Y$ is the space of (signed) Radon measures on $Y$ by Riesz representation theorem. We denote this dual pairing by
$$ \la h,\eta\ra:=\int_Yhd\eta $$
for all $h\in C(Y)$ and $\eta\in\M(Y)$. We extend this notation to cover that case where $h\in\Bor(Y)$ and $\eta$ is a Borel measure on $Y$ such that $\la h,\eta\ra:=\int_Yhd\eta\in[-\infty,\infty]$ is well defined.
%------------------------------
\subsection{Path integration operator \texorpdfstring{$A$}{A} and its formal transpose \texorpdfstring{$A^\intercal$}{AT}} \label{ssec:A-and-its-transpose}
In this section we introduce some shorthand notation that is used throughout the article. The motivation for this notation comes from basic convex optimization, see e.g. \cites{Barbu, Rockafellar}.

We define the path-integration operator $A$ on $\Bor_+(X)$ as follows: If $\rho\in\Bor_+(X)$, then $A\rho$ is a function on $\Curv(X)$, given by
$$ A\rho\colon \gamma\mapsto \int_\gamma\rho ds \quad\text{for all }\gamma\in\Curv(X). $$
Lemma \ref{lem:regularity-of-various-maps-on-Curv} implies that $A\colon\Bor_+(X)\to\Bor_+(\Curv(X))$ and  $A\colon LSC_+(X)\to LSC_+(\Curv(X))$. 

If $\eta$ is a Borel measure on a Borel set $\Gamma\subset\Curv(X)$, we define Borel measure $A^\intercal\eta$ on $X$ by
$$ (A^\intercal\eta)(E):=\int_\Gamma\int_\gamma\mathbbm{1}_Edsd\eta\quad\text{for all }E\subset X\text{ Borel.} $$ 
Here and subsequently $\mathbbm{1}_E$ stands for indicator function of set $E$.
It follows that if $\rho\in\Bor_+(X)$, then 
\begin{equation} \label{eq:transpose-formula-new}
    \la A\rho,\eta\ra=\int_\Gamma\int_\gamma \rho dsd\eta=\int_X\rho d(A^\intercal\eta)=\la \rho,A^\intercal\eta\ra,
\end{equation}
hence we may view $A^\intercal$ as a formal transpose of $A$. Measure $A^\intercal\eta$ is also called "induced measure" in \cite{David2020preprint}*{Definition 4.4} or "projected measure" in \cite{Savare2022}*{Section 4.2}.

\begin{remark}
Both $A$ and $A^\intercal$ are linear and nonnegative. 
\end{remark}

\begin{remark} \label{rem:restriction-and-AT}
If $\eta$ is a Borel measure on a Borel set $\Gamma\subset\Curv(X)$ and $\tilde{\Gamma}\subset\Gamma$ is Borel, then $A^\intercal(\eta\resmes\tilde{\Gamma})\leq A^\intercal\eta$.
\end{remark}
%------------------------------
\subsection{Lower semi-continuity results for \texorpdfstring{$A$}{A} and \texorpdfstring{$A^\intercal$}{AT}}
The following lemma says that for fixed $\rho\in C_+(X)$ the function $\eta\mapsto\la A\rho,\eta\ra=\la \rho,A^\intercal\eta\ra$ is lower semi-continuous with respect to the weak convergence of Radon measures. For technical reasons, we need a slightly weaker notion of convergence than weak convergence in the statement. 
 
\begin{lemma} \label{lem:lsc-of-dual-pairing}
Let $\Gamma\subset\Curv(X)$ be Borel. If $(\eta_n)$ is a sequence in $\M_+(\Gamma)$ and $\eta$ is a Borel measure on $\Gamma$ for which $\lim_{n\to \infty}\int_{\Gamma} h d\eta_n = \int_{\Gamma} h d\eta$ for all $h\in C_+(\Gamma)$, then for all $\rho\in C_+(X)$
\begin{enumerate}
    \item $\la A\rho,\eta\ra\leq\liminf_{n\to\infty}\la A\rho,\eta_n\ra$,
    \item $\la \rho,A^\intercal\eta\ra\leq\liminf_{n\to\infty}\la \rho,A^\intercal\eta_n\ra$. 
\end{enumerate}
\end{lemma}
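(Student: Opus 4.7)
The plan is to first reduce (2) to (1) via the transpose identity \eqref{eq:transpose-formula-new}: for any Borel measure $\tau$ on $\Gamma$, $\la \rho, A^\intercal \tau\ra = \la A\rho, \tau\ra$. Thus (2) is immediate from (1), and I focus on proving (1). Since $\rho \in C_+(X)$ is in particular lower semi-continuous, Lemma \ref{lem:regularity-of-various-maps-on-Curv}(2) implies that $\Phi := A\rho : \Gamma \to [0,\infty]$ is lower semi-continuous and nonnegative.

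The core idea is to approximate $\Phi$ from below by an increasing sequence of bounded continuous nonnegative functions, apply the hypothesis to each, and pass to the limit by monotone convergence. Concretely, for each $k\in\N$ I would set
\[
\Phi_k(\gamma) := \min\Big(k,\ \inf_{\gamma' \in \Gamma}\big(\Phi(\gamma') + k\, d_{\Curv}(\gamma, \gamma')\big)\Big).
\]
Each $\Phi_k$ is bounded above by $k$ and $k$-Lipschitz (hence continuous and nonnegative), so $\Phi_k \in C_+(\Gamma)$, and clearly $\Phi_k \leq \Phi$ pointwise. The standard fact that the McShane envelopes of a nonnegative lower semi-continuous function on a metric space increase pointwise to it then gives $\Phi_k \nearrow \Phi$ on $\Gamma$.

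With this approximation in hand, the hypothesis applied to the bounded test function $\Phi_k \in C_+(\Gamma)$ yields
\[
\int_\Gamma \Phi_k\, d\eta = \lim_{n\to\infty}\int_\Gamma \Phi_k\, d\eta_n \leq \liminf_{n\to\infty}\int_\Gamma \Phi\, d\eta_n
\]
for every $k$, where the last inequality uses $\Phi_k \leq \Phi$. Letting $k \to \infty$ and applying monotone convergence against the Borel measure $\eta$ gives
\[
\la A\rho, \eta\ra = \int_\Gamma \Phi\, d\eta = \lim_{k\to\infty} \int_\Gamma \Phi_k\, d\eta \leq \liminf_{n\to\infty} \la A\rho, \eta_n\ra,
\]
which establishes (1), and hence (2) by the initial reduction.

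The main technical point is the pointwise monotone convergence $\Phi_k \nearrow \Phi$, which is the classical statement that a nonnegative lower semi-continuous function on a metric space is the pointwise increasing limit of its bounded Lipschitz minorants. Apart from that, the argument is a routine reduction to the bounded continuous case of the hypothesis combined with monotone convergence; no tightness or compactness on $\Gamma$ is needed beyond what is implicit in those two inputs.
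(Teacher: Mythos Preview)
Your proof is correct and follows essentially the same route as the paper: both observe that $A\rho$ is lower semi-continuous on $\Gamma$, approximate it from below by an increasing sequence of bounded nonnegative continuous functions, apply the convergence hypothesis to each approximant, and conclude by monotone convergence (with (2) reduced to (1) via \eqref{eq:transpose-formula-new}). The only cosmetic difference is that you give the explicit Moreau--Yosida/McShane construction of the approximants, whereas the paper simply invokes the existence of such a sequence.
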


\begin{proof}
\begin{enumerate}
    \item Let $\rho\in C_+(X)$. Then $A\rho\in LSC_+(\Gamma)$ by Lemma \ref{lem:regularity-of-various-maps-on-Curv}, and consequently we can find a sequence $(h_k)$ of nonnegative, bounded, continuous functions on $\Gamma$ such that $h_k\nearrow A\rho$. Then for all $k\in\N$ 
    $$ \lim_{n\to\infty}\int_\Gamma h_k d\eta_n=\int_\Gamma h_k d\eta $$
    by the weak convergence of $(\eta_n)$.
    By the monotonicity of $(h_k)$ we may let $k\to\infty$ to obtain
    $$ \int_\Gamma A\rho d\eta=\lim_{k\to\infty}\int_\Gamma h_k d\eta=\lim_{k\to\infty}\lim_{n\to\infty}\int_\Gamma h_kd\eta_n\leq \liminf_{n\to\infty}\int_\Gamma A\rho d\eta_n. $$
    It follows that for all $\rho\in C_+(X)$
    $$ \la\rho,A^\intercal\eta\ra \leq \liminf_{n\to\infty}\,\la \rho,A^\intercal\eta_n\ra. $$
    \item This follows from item (1) and \eqref{eq:transpose-formula-new}.
\end{enumerate}
\end{proof}

The following lemma is a corollary of the previous lemma, Lemma \ref{lem:lsc-of-dual-pairing}. It yields a lower semi-continuity result for the joint function $(\rho,\eta)\mapsto\la A\rho,\eta\ra=\la \rho,A^\intercal\eta\ra$ under suitable assumptions. 

\begin{lemma} \label{lem:B-weakly-closed-and-double-limit-in-dual-pairing}
Let $\Gamma\subset\Curv(X)$ be Borel. Suppose that $(\eta_n)$ is a sequence in $\M_+(\Gamma)$ and $\eta$ is a Borel measure on $\Gamma$ for which $\lim_{n\to \infty}\int_{\Gamma} h d\eta_n = \int_{\Gamma} h d\eta$ for all $h\in C_+(\Gamma)$, and for which $\|\frac{d(A^\intercal\eta_n)}{d\mu}\|_{L^q(X)}\leq 1$.
Then the following hold:
\begin{enumerate}
    \item Measure $A^\intercal \eta$ is absolutely continuous with respect to $\mu$ and $\|\frac{d(A^\intercal\eta)}{d\mu}\|_{L^q(X)}\leq 1$.
    \item If $(\rho_n)$ is a sequence in $\LB^p_+(X)$ such that $\rho_n\xrightarrow{n\to\infty}\rho\in\LB^p_+(X)$ in $L^p(X)$, then
    \[
    \langle A\rho,\eta\rangle \leq \liminf_{n\to \infty} \langle A\rho_n,\eta_n\rangle
    \quad\text{and}\quad
    \langle \rho, A^\intercal \eta\rangle \leq \liminf_{n\to \infty} \langle \rho_n, A^\intercal \eta_n\rangle.
    \]
\end{enumerate}
\end{lemma}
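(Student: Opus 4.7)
My plan for part (1) is first to combine Lemma~\ref{lem:lsc-of-dual-pairing}(2) with H\"older's inequality to extract the key estimate
\[
\langle \rho, A^\intercal \eta\rangle \leq \|\rho\|_{L^p(X)}
\qquad\text{for every } \rho \in C_+(X).
\]
Indeed, Lemma~\ref{lem:lsc-of-dual-pairing}(2) gives $\langle \rho, A^\intercal \eta\rangle \leq \liminf_n \langle \rho, A^\intercal \eta_n\rangle$, while H\"older applied to the representation $d(A^\intercal\eta_n) = \tfrac{d(A^\intercal\eta_n)}{d\mu}\,d\mu$ together with the hypothesis $\|\tfrac{d(A^\intercal \eta_n)}{d\mu}\|_{L^q(X)}\leq 1$ bounds each term on the right by $\|\rho\|_{L^p(X)}$. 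From this key estimate I deduce $A^\intercal\eta \ll \mu$ via outer regularity of $\mu$: for Borel $E\subset X$ with $\mu(E)=0$ and $\epsilon>0$, pick an open $U\supset E$ with $\mu(U)<\epsilon$. Since $\mathbbm{1}_U$ is lower semi-continuous, it is the pointwise monotone limit of an increasing sequence $(\rho_k)\subset C_+(X)\cap L^p(X)$ (one may take $\rho_k(x)=\min(k\,d(x,X\setminus U),1)$), and monotone convergence yields
\[
(A^\intercal\eta)(E)\leq (A^\intercal\eta)(U)=\lim_{k\to\infty}\int_X\rho_k\,d(A^\intercal\eta)\leq \lim_{k\to\infty}\|\rho_k\|_{L^p(X)}=\mu(U)^{1/p}<\epsilon^{1/p}.
\]
Letting $\epsilon\to 0$ gives absolute continuity. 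Writing $g:=\tfrac{d(A^\intercal\eta)}{d\mu}$, density of $C_c(X)_+$ in $L^p_+(X)$ combined with Fatou's lemma (applied along an a.e.-convergent subsequence) upgrades the key estimate to $\int_X \rho g\,d\mu\leq \|\rho\|_{L^p(X)}$ for every $\rho\in L^p_+(X)$, and $L^p$--$L^q$ duality delivers $\|g\|_{L^q(X)}\leq 1$.

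For part (2), set $g_n:=\tfrac{d(A^\intercal\eta_n)}{d\mu}$ and $g:=\tfrac{d(A^\intercal\eta)}{d\mu}$, where the latter exists by part~(1). Fix any subsequence realizing $\liminf_n\langle A\rho_n,\eta_n\rangle$; by Banach--Alaoglu (valid since $1<q<\infty$) and $\|g_n\|_{L^q}\leq 1$, I extract a further subsequence $(n_k)$ with $g_{n_k}\rightharpoonup g'$ weakly in $L^q(X)$ for some $g'\in L^q(X)$. Applying Lemma~\ref{lem:lsc-of-dual-pairing}(2) to arbitrary $\psi\in C_+(X)$ along $(n_k)$ yields $\int_X\psi g\,d\mu\leq \int_X\psi g'\,d\mu$, which forces $g\leq g'$ $\mu$-a.e. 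Combining the strong $L^p$-convergence $\rho_{n_k}\to\rho$ with the weak $L^q$-convergence of $g_{n_k}$ then gives
\[
\langle A\rho_{n_k},\eta_{n_k}\rangle=\int_X\rho_{n_k}g_{n_k}\,d\mu\xrightarrow{k\to\infty}\int_X\rho g'\,d\mu\geq\int_X\rho g\,d\mu=\langle A\rho,\eta\rangle,
\]
where the last equality uses \eqref{eq:transpose-formula-new} and part~(1). Since this holds along every subsequence, $\liminf_n\langle A\rho_n,\eta_n\rangle\geq \langle A\rho,\eta\rangle$; the inequality for $\langle\rho,A^\intercal\eta\rangle$ is immediate from \eqref{eq:transpose-formula-new}.

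I expect the main obstacle to be part~(1), specifically deducing genuine absolute continuity of the Borel (not a priori Radon) measure $A^\intercal\eta$ from the weak control supplied only by test functions in $C_+(X)$; the bridge is the outer regularity of $\mu$ together with the monotone approximation of open-set indicators by continuous functions from below. Once~(1) is in hand, part~(2) is a routine strong--weak convergence argument, with the only subtlety being that weak subsequential limits $g'$ of $(g_n)$ need only \emph{majorize} $g$ (not equal it), but this one-sided bound suffices precisely because $\rho\geq 0$.
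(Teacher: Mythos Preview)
Your proof is correct, but both parts take a different route from the paper's. For part~(1), you and the paper start identically with Lemma~\ref{lem:lsc-of-dual-pairing}, but the paper immediately extracts a weak $L^q$-limit $f$ of the densities $g_n$ and compares $A^\intercal\eta$ directly to the measure $f\mu$: from $\int\rho\,d(A^\intercal\eta)\leq\int\rho f\,d\mu$ for all $\rho\in C_+(X)$ one gets $(A^\intercal\eta)(E)\leq(f\mu)(E)$ for Borel $E$, which yields absolute continuity and the $L^q$-bound in one stroke. Your outer-regularity-plus-Fatou argument is a valid alternative but slightly longer. For part~(2) the contrast is sharper. The paper never extracts a subsequence: it approximates $\rho$ by a continuous $\tilde\rho\in L^p$ with $\|\rho-\tilde\rho\|_{L^p}\leq\epsilon$, applies Lemma~\ref{lem:lsc-of-dual-pairing} to $\tilde\rho$, and controls the two error terms $\langle\rho-\tilde\rho,A^\intercal\eta\rangle$ and $\langle\tilde\rho-\rho_n,A^\intercal\eta_n\rangle$ by H\"older and the uniform $L^q$-bound on the densities. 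Your weak-compactness argument, deducing the pointwise majorization $g\leq g'$ and then pairing strong $L^p$-convergence with weak $L^q$-convergence, is equally valid and has the merit of exposing \emph{why} the inequality holds (the limiting density can only go up); the paper's approach is more elementary in that it avoids invoking subsequences and weak limits in part~(2) altogether. One small remark: when you test against $\psi\in C_+(X)$ to conclude $g\leq g'$, you implicitly need $\psi\in L^p(X)$ for the weak-$L^q$ convergence to bite, so restrict to $\psi\in C_+(X)\cap L^p(X)$ (automatic when $X$ is compact, as in all applications).
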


\begin{proof}
\begin{enumerate}
\item By Lemma \ref{lem:lsc-of-dual-pairing} for all $\rho\in C_+(X)$
$$ \int_X\rho d(A^\intercal\eta)\leq \liminf_{n\to\infty}\int_X\rho d(A^\intercal\eta_n)
=\liminf_{n\to\infty}\int_X\rho\frac{d(A^\intercal\eta_n)}{d\mu}d\mu=\int_X\rho fd\mu, $$
where $f\in L^q(X)$ denotes a weak limit of the sequence $\big(\frac{d(A^\intercal\eta_n)}{d\mu}\big)_{n=1}^\infty$. 

Let $\nu:=f\mu$. We conclude that $(A^\intercal\eta)(E)\leq \nu(E)$ for all $E\subset X$ Borel, and thus $A^\intercal\eta$ is absolutely continuous with respect to $\mu$. It follows that $\|\frac{d(A^\intercal\eta)}{d\mu}\|_{L^q(X)}\leq\|f\|_{L^q(X)}\leq 1$.
\item Fix $\epsilon>0$, and choose a continuous function $\tilde{\rho}\in L^p(X)\cap C_+(X)$ such that $\|\rho-\tilde{\rho}\|_{L^p(X)}\leq \epsilon$. Then
\begin{align*}
    \la\rho,A^\intercal\eta\ra &=\la \rho, A^\intercal \eta \ra \\&\leq \la \rho-\tilde{\rho}, A^\intercal \eta \ra + \la \tilde{\rho}, A^\intercal \eta \ra \\
    &\leq \epsilon + \la \tilde{\rho}, A^\intercal \eta \ra \\
    &\leq \epsilon + \liminf_{n\to\infty}\la \tilde{\rho}, A^\intercal \eta_n \ra & \text{Follows from Lemma \ref{lem:lsc-of-dual-pairing}.} \\
    &=\epsilon + \liminf_{n\to\infty}\Big(\la \rho_n, A^\intercal \eta_n \ra  + \la \tilde{\rho}-\rho_n, A^\intercal \eta_n \ra\Big)\\
    &\leq 2\epsilon + \liminf_{n\to\infty}\la \rho_n, A^\intercal \eta_n \ra \\
    &=2\epsilon + \liminf_{n\to\infty}\la A\rho_n,\eta_n \ra.
\end{align*}
Above we employed identity \eqref{eq:transpose-formula-new} in the first and the last equality.
Since $\epsilon>0$ is arbitrary the claim follows.
\end{enumerate}
\end{proof}
%------------------------------
\section{A Generalization of modulus problem} \label{sec:generalization}

Let $X=(X,d,\mu)$ be a metric measure space. 

Fix $1<p<\infty$ and let $q=\frac{p}{p-1}$ denote the dual exponent of $p$. We introduce a generalized modulus problem:

\begin{definition}
Let us fix a family of rectifiable curves $\Gamma\subset\Curv(X)$ and a (bounding) function $b\colon\Gamma\to[-\infty,\infty]$. 
\begin{itemize}
    \item[(i)] A function $\rho\in\Bor_+(X)$ is admissible for $\Gamma$ with respect to the bounding function $b$ if $\int_\gamma\rho ds\geq b(\gamma)$ for all $\gamma\in\Gamma$. The class of all such admissible functions is denoted by $\A(\Gamma,b)$.
    \item[(ii)] The $p$-modulus of $\Gamma$ with respect to $b$ is
    $$ \Mod_p(\Gamma,b):=\inf\Big\{\int_X\rho^pd\mu:\rho\in\A(\Gamma,b)\Big\}. $$
    \item[(iii)] The continuous $p$-modulus of $\Gamma$ with respect to $b$ is
    $$ \Mod_{p}^c(\Gamma,b):=\inf\Big\{\int_X\rho^pd\mu:\rho\in\A(\Gamma,b)\cap C(X)\Big\}. $$
\end{itemize}
\end{definition}

\begin{remark}
If $b\equiv1$, then $\Mod_p(\Gamma,1)$ and $\Mod_p^c(\Gamma,1)$ reduce back to the usual (continuous) modulus of a family of curves, $\Mod_p(\Gamma)$ and $\Mod_p^c(\Gamma)$; cf. \cites{Heinonen2001,Heinonen2015,Bjorn2011}.
\end{remark}

The following lemmas give the existence of a weak minimizer of the (continuous) generalized modulus problem. A function $\rho\in\Bor_+(X)$ is called weakly admissible if 
$\int_\gamma\rho ds\geq b(\gamma)$ for $p$-a.e. $\gamma\in\Gamma$. Weakly admissible functions are characterized by Fuglede's lemma.

\begin{lemma}\label{lem:closure} Let $\Gamma \subset \Curv(X)$ and $b:\Gamma\to [-\infty,\infty]$. Then
 $\rho \in \overline{\mathcal{A}(\Gamma,b)}\cap \mathfrak{L}_+^p(X)$ if and only if $\rho \in \mathfrak{L}_+^p(X)$ and it is weakly admissible, i.e. $\int_\gamma \rho ds \geq b(\gamma)$ for $p$-a.e. $\gamma\in \Gamma$.
\end{lemma}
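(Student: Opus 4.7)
The plan is to prove the two implications separately. The forward direction is essentially an immediate application of Fuglede's lemma, while the reverse direction requires building an explicit approximating sequence using the zero-modulus trick.

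For the forward direction $(\Rightarrow)$, suppose $\rho \in \mathfrak{L}^p_+(X)$ and there exists a sequence $(\rho_n) \subset \mathcal{A}(\Gamma,b)$ with $\rho_n \to \rho$ in $L^p(X)$. Since each $\rho_n$ is Borel and nonnegative and the limit is assumed in $\mathfrak{L}^p_+(X)$, we may pass to the subsequence given by Fuglede's lemma (Lemma \ref{lem:Fuglede}) along which $\int_\gamma \rho_{n_k} \, ds \to \int_\gamma \rho \, ds$ for $p$-a.e.\ $\gamma \in \Curv(X)$. For such $\gamma \in \Gamma$, since $\int_\gamma \rho_{n_k}\, ds \geq b(\gamma)$ for every $k$, the limit inherits the inequality $\int_\gamma \rho \, ds \geq b(\gamma)$, giving weak admissibility.

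For the reverse direction $(\Leftarrow)$, let $\Gamma_0 := \{\gamma \in \Gamma : \int_\gamma \rho\, ds < b(\gamma)\}$, which by weak admissibility satisfies $\Mod_p(\Gamma_0) = 0$. The key step is to construct a single $\sigma \in \mathfrak{L}^p_+(X)$ with $\int_\gamma \sigma \, ds = +\infty$ for every $\gamma \in \Gamma_0$. By the definition of $p$-modulus being zero, for each $n \in \N$ one may pick $\sigma_n \in \mathfrak{L}^p_+(X)$ with $\|\sigma_n\|_{L^p(X)} \leq 2^{-n}$ and $\int_\gamma \sigma_n \, ds \geq 1$ for all $\gamma \in \Gamma_0$; setting $\sigma := \sum_{n=1}^\infty \sigma_n$ gives a Borel nonnegative function with $\|\sigma\|_{L^p(X)} \leq 1$ by Minkowski, and by monotone convergence $\int_\gamma \sigma \, ds = \sum_n \int_\gamma \sigma_n\, ds = +\infty$ for every $\gamma \in \Gamma_0$.

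With $\sigma$ in hand, define $\rho_n := \rho + \tfrac{1}{n}\sigma \in \mathfrak{L}^p_+(X)$. Then $\rho_n \to \rho$ in $L^p(X)$, and $\rho_n \in \mathcal{A}(\Gamma,b)$: for $\gamma \in \Gamma \setminus \Gamma_0$ we have $\int_\gamma \rho_n \, ds \geq \int_\gamma \rho\, ds \geq b(\gamma)$, while for $\gamma \in \Gamma_0$ we have $\int_\gamma \rho_n \, ds \geq \tfrac{1}{n}\int_\gamma \sigma \, ds = +\infty \geq b(\gamma)$. Hence $\rho$ lies in the $L^p$-closure of $\mathcal{A}(\Gamma,b)$. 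The only real obstacle is the construction of $\sigma$, which is the standard geometric-sum trick for turning quantitative smallness of modulus into a single function that is infinite on the exceptional family; everything else is essentially bookkeeping.
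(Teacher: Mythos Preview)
Your proof is correct and follows essentially the same approach as the paper: Fuglede's lemma for the forward direction, and for the converse the construction $\rho_n=\rho+\tfrac{1}{n}\sigma$ with $\sigma\in\mathfrak{L}^p_+(X)$ having infinite line integral on the exceptional family. The only difference is that you spell out the standard geometric-sum construction of $\sigma$, whereas the paper simply cites \cite{Heinonen2015}*{Lemma 5.2.8} for its existence.
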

\begin{proof}
    If $\rho \in \overline{\mathcal{A}(\Gamma,b)}\cap \mathfrak{L}_+^p(X)$, then there exists a sequence of $\rho_k\in \mathcal{A}(\Gamma,b)$ converging to $\rho$ in $L^p(X)$. Then, the weak admissibility of $\rho$ follows from Fuglede's Lemma  \ref{lem:Fuglede} after passing through a subsequence.

    Conversely, suppose that $\rho \in \mathfrak{L}_+^p(X)$ is weakly admissible. Then, there exists a collection $E\subset \Gamma$ such that $\int_\gamma \rho ds \geq b(\gamma)$ for all $\gamma\in \Gamma \setminus E$ and $\Mod_p(\Gamma)=0$. By \cite{Heinonen2015}*{Lemma 5.2.8} there exists a function $h\in \mathfrak{L}^p_+(X)$ such that $\int_\gamma h ds = \infty$ for all $\gamma \in E$. Let $\rho_k = \rho + h/k$. Then $\rho_k \in \mathcal{A}(\Gamma,b)$ and clearly $\rho_k\to \rho$ in $L^p(X)$. 
\end{proof}

Modulus remains unchanged if we remove a zero modulus family of curves. The same property holds for our generalized modulus $\Mod_p(\Gamma,b)$. The proof follows directly from the previous Lemma and is the same as in the classical case \cite{Bjorn2011}*{Proposition 1.37}.

\begin{lemma} \label{lem:relaxation-of-admissibility}
Let $\Gamma\subset\Curv(X)$ and $b\colon\Gamma\to[-\infty,\infty]$. Then
\[ \Mod_p(\Gamma,b)=\inf\Big\{
\int_X\rho^pd\mu:\rho\in\Bor_+(X)
\text{ and }\int_\gamma\rho ds\geq b(\gamma)\text{ for $p$-a.e. }\gamma\in\Gamma\Big\}. \]
\end{lemma}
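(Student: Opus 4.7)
The plan is to deduce this directly from the closure Lemma \ref{lem:closure}, which already identifies $\overline{\mathcal{A}(\Gamma,b)}\cap\mathfrak{L}_+^p(X)$ with the class of weakly admissible $L^p$-functions. The two inequalities needed are then essentially book-keeping.

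For the inequality $\Mod_p(\Gamma,b)\geq\inf\{\int_X\rho^pd\mu:\rho\text{ is weakly admissible}\}$, I would simply observe that every genuinely admissible $\rho\in\mathcal{A}(\Gamma,b)$ satisfies $\int_\gamma\rho\,ds\geq b(\gamma)$ for \emph{all} $\gamma\in\Gamma$ and is, in particular, weakly admissible. Thus the infimum on the right is taken over a larger class than the one defining $\Mod_p(\Gamma,b)$, giving the desired inequality.

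For the reverse inequality, let $\rho\in\Bor_+(X)$ be weakly admissible. If $\int_X\rho^pd\mu=\infty$ there is nothing to prove, so assume $\rho\in\mathfrak{L}_+^p(X)$. By Lemma \ref{lem:closure}, $\rho$ lies in the closure of $\mathcal{A}(\Gamma,b)$ in $L^p(X)$; concretely, one may take $\rho_k=\rho+h/k$, where $h\in\mathfrak{L}_+^p(X)$ has $\int_\gamma h\,ds=\infty$ on the exceptional family (obtained from \cite{Heinonen2015}*{Lemma 5.2.8}). Each $\rho_k$ belongs to $\mathcal{A}(\Gamma,b)$ and $\rho_k\to\rho$ in $L^p(X)$, so
\[
\Mod_p(\Gamma,b)\leq\int_X\rho_k^pd\mu\xrightarrow{k\to\infty}\int_X\rho^pd\mu.
\]
Taking the infimum over all weakly admissible $\rho$ yields the other inequality.

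There is no real obstacle here; the content has already been absorbed into Lemma \ref{lem:closure}, and the only thing to be careful about is the trivial case $\int_X\rho^pd\mu=\infty$ for weakly admissible $\rho$ (which could otherwise fall outside the scope of the closure statement). Once that is dispensed with, the equality is immediate, and the proof is essentially a one-line appeal to the closure lemma plus the fact that the $L^p$-norm is continuous under $L^p$-convergence.
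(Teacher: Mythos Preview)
Your proposal is correct and is exactly the approach the paper takes: the paper simply states that the result follows directly from Lemma \ref{lem:closure} (and is the same as in the classical case \cite{Bjorn2011}*{Proposition 1.37}), without writing out the details you have supplied.
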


By a weak minimizer we mean a weakly admissible function $\rho\in\Bor_+(X)$ such that 
$\int_X\rho^pd\mu=\Mod_p(\Gamma,b)$. For the continuous version we require that $\int_X\rho^pd\mu=\Mod_p^c(\Gamma,b)$.

\begin{lemma} \label{lem:primal-minimizer}
Let $\Gamma\subset\Curv(X)$ and $b\colon\Gamma\to[-\infty,\infty]$.
    Suppose that $\Mod_p(\Gamma,b)<\infty$. Then there exists $\rho^*\in\LB^p_+(X)$ that is a unique (as a member of $L^p(X)$) weak minimizer of $\Mod_p(\Gamma,b)$. Moreover, for any minimizing sequence $\rho_k$ for $\Mod_p(\Gamma,b)$, we have $\rho_k\to \rho^*$ in $L^p(X)$ and thus $\rho^* \in \overline{\mathcal{A}(\Gamma,b)}$. Conversely, if $\rho^* \in \overline{\mathcal{A}(\Gamma,b)}$  and $\|\rho^*\|_{L^p(X)}^p = \Mod_p(\Gamma,b)$, then $\rho^*$ is a weak minimizer.
\end{lemma}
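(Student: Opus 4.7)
The plan is to apply the direct method of the calculus of variations, using the uniform convexity of $L^p(X)$ for $1<p<\infty$ to obtain both existence and uniqueness. The admissible class $\A(\Gamma,b)$ is convex, since nonnegativity and the bound $\int_\gamma\rho\,ds\geq b(\gamma)$ are preserved under convex combinations; the same reasoning, combined with countable sub-additivity of $p$-modulus, shows that the weakly admissible class is convex as well.

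First I would fix a minimizing sequence $(\rho_k)\subset\A(\Gamma,b)$ with $\|\rho_k\|_{L^p(X)}^p\to m:=\Mod_p(\Gamma,b)$. Since $(\rho_k+\rho_j)/2\in\A(\Gamma,b)$, we have $\|(\rho_k+\rho_j)/2\|_{L^p(X)}^p\geq m$, and together with $\|\rho_k\|_{L^p(X)}\to m^{1/p}$ the triangle inequality pins the midpoint norm to $m^{1/p}$ as $k,j\to\infty$. Uniform convexity of $L^p(X)$ (via Clarkson's inequality) then forces $\|\rho_k-\rho_j\|_{L^p(X)}\to 0$, so $(\rho_k)$ is Cauchy and converges in $L^p(X)$ to some $\rho^*\in\LB^p_+(X)$, after choosing a nonnegative Borel representative. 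Continuity of the norm yields $\|\rho^*\|_{L^p(X)}^p=m$, and Lemma \ref{lem:closure} places $\rho^*\in\overline{\A(\Gamma,b)}\cap\LB^p_+(X)$ and in particular exhibits $\rho^*$ as weakly admissible. By Lemma \ref{lem:relaxation-of-admissibility}, the infimum of $\int_X\rho^p\,d\mu$ over weakly admissible functions is exactly $m$, so $\rho^*$ is a weak minimizer.

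For uniqueness, I would apply the same Cauchy argument to the interlaced sequence $\rho^*,\rho^{**},\rho^*,\rho^{**},\ldots$ associated to any two weak minimizers. By Lemma \ref{lem:relaxation-of-admissibility} this is a minimizing sequence of weakly admissible functions, and by the convexity of weak admissibility and uniform convexity of $L^p(X)$ it must be Cauchy, which forces $\rho^*=\rho^{**}$ as members of $L^p(X)$. The strong convergence claim is then immediate: an arbitrary minimizing sequence is Cauchy in $L^p(X)$ by the same argument and its strong limit is a weak minimizer, hence equals $\rho^*$ by uniqueness. This also yields $\rho^*\in\overline{\A(\Gamma,b)}$ (when the minimizing sequence is chosen in $\A(\Gamma,b)$).

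For the converse, suppose $\rho^*\in\overline{\A(\Gamma,b)}\cap\LB^p_+(X)$ with $\|\rho^*\|_{L^p(X)}^p=\Mod_p(\Gamma,b)$. Lemma \ref{lem:closure} gives weak admissibility, and Lemma \ref{lem:relaxation-of-admissibility} identifies $\Mod_p(\Gamma,b)$ with the infimum over weakly admissible functions, so $\rho^*$ is a weak minimizer by definition. The main technical ingredient throughout is the uniform convexity step, which is standard but crucial: it fails at $p\in\{1,\infty\}$, but is harmless under the standing assumption $1<p<\infty$; no compactness or reflexivity argument beyond this is needed.
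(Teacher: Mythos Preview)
Your proof is correct and essentially equivalent in strength to the paper's, but the route differs in one notable way. The paper first passes to a weakly convergent subsequence (reflexivity of $L^p$), then invokes Mazur's lemma together with the convexity of $\A(\Gamma,b)$ to upgrade to strong convergence, and finally applies Fuglede's lemma to obtain weak admissibility; Clarkson's inequalities appear only at the uniqueness step. You instead use Clarkson/uniform convexity from the outset to show directly that any minimizing sequence is Cauchy, bypassing weak compactness and Mazur entirely. Your approach is slightly more elementary and has the pleasant side effect that the ``moreover'' clause (strong convergence of \emph{every} minimizing sequence, not just a subsequence) falls out immediately rather than being deduced afterward from uniqueness. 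The paper's route, on the other hand, is the more common template in modulus arguments and generalizes readily to settings where one has weak compactness but perhaps not uniform convexity. Both arguments ultimately rely on Fuglede's lemma (yours via Lemma~\ref{lem:closure}) to pass from $L^p$-closure to weak admissibility, so the substantive content is the same.
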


\begin{proof}
Let us take a minimizing sequence $(\rho_k)$. That is, $\rho_k\in\A(\Gamma,b)$ and
$$ \lim_{k\to\infty} \|\rho_k\|_{L^p(X)}=\inf_{\rho\in\A(\Gamma,b)}\|\rho\|_{L^p(X)}. $$
Since $(\rho_k)$ is a bounded sequence in $L^p(X)$, we may assume that it converges weakly in $L^p(X)$ to some $\rho^*\in L^p(X)$. 
By Mazur's lemma and the convexity of $\A(\Gamma,b)$ we may assume that in fact $\rho_k\xrightarrow{k\to\infty}\rho^*$ strongly in $L^p(X)$. Moreover, we can select a nonnegative Borel representative of $\rho^*$. Fuglede's lemma \ref{lem:Fuglede} implies that $\rho^*$ is a weak minimizer.
The uniqueness of $\rho^*$ follows from Clarkson's inequalities. 

The converse direction follows from the first part by choosing a sequence $\rho_k \in \mathcal{A}(\Gamma,b)$ converging to $\rho^*$.
\end{proof}

The following Lemma is an adaptation of the argument in \cite{HeinonenK98}*{Proposition 2.17}.
It says that the generalized modulus and its continuous version agree if the curve family is compact and the bounding function is continuous with a positive infimum.

\begin{lemma} \label{lem:continuous-densities}
Suppose that $X$ is compact.
Let $\Gamma\subset \Curv(X)$ be compact and $b\in C(\Gamma)$ such that $\inf_{\gamma\in\Gamma} b(\gamma)>0$. Then
$$ \Mod_p(\Gamma,b)=\Mod_p^c(\Gamma,b). $$
\end{lemma}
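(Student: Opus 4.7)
The plan is to prove the nontrivial direction $\Mod_p^c(\Gamma,b)\leq \Mod_p(\Gamma,b)$; the reverse inequality is immediate since $\A(\Gamma,b)\cap C(X)\subset \A(\Gamma,b)$. Assume $\Mod_p(\Gamma,b)<\infty$, fix a (nearly) minimizing $\rho\in\A(\Gamma,b)\cap\LB^p_+(X)$, and fix $\epsilon\in(0,1)$. The goal is to produce $\tilde\phi\in\A(\Gamma,b)\cap C(X)$ with $\|\tilde\phi\|_{L^p(X)}\leq\frac{1+\epsilon}{1-\epsilon}\|\rho\|_{L^p(X)}$.

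First I would regularize $\rho$ from above by a lower semicontinuous function. Since $X$ is compact, $\mu$ is Radon, so by a Vitali--Carath\'eodory-type argument we can find $\tilde\rho\in LSC_+(X)$ with $\tilde\rho\geq\rho$ everywhere on $X$ and $\|\tilde\rho\|_{L^p(X)}\leq(1+\epsilon)\|\rho\|_{L^p(X)}$. Clearly $\tilde\rho\in\A(\Gamma,b)$. Next, I would represent $\tilde\rho$ as an increasing pointwise limit of continuous (in fact $n$-Lipschitz) functions $\phi_n\in C_+(X)$ via the inf-convolution
\[
\phi_n(x):=\inf_{y\in X}\bigl(\tilde\rho(y)+n\,d(x,y)\bigr),\qquad n\in\N.
\]
Then $0\leq\phi_n\nearrow\tilde\rho$ pointwise on $X$, and by monotone convergence along each $\gamma\in\Gamma$ we have $A\phi_n(\gamma)\nearrow A\tilde\rho(\gamma)\geq b(\gamma)$. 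Moreover $\|\phi_n\|_{L^p(X)}\leq\|\tilde\rho\|_{L^p(X)}$, and by Lemma~\ref{lem:regularity-of-various-maps-on-Curv}(2) each $A\phi_n$ is lower semicontinuous on $\Gamma$.

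The main step is upgrading this pointwise convergence to a uniform lower bound using the compactness of $\Gamma$ and the assumption $\inf_{\Gamma}b>0$. Set $f_n:=A\phi_n/b$, which is a well-defined lsc function on the compact set $\Gamma$, and $(f_n)$ is increasing with $\lim_n f_n(\gamma)\geq 1$ for every $\gamma\in\Gamma$. Given $\epsilon>0$, for each $\gamma\in\Gamma$ pick $n(\gamma)$ with $f_{n(\gamma)}(\gamma)>1-\epsilon/2$, and then by lsc an open neighborhood $U_\gamma$ of $\gamma$ on which $f_{n(\gamma)}>1-\epsilon$. Extract a finite subcover $U_{\gamma_1},\ldots,U_{\gamma_k}$ of $\Gamma$ and let $N:=\max_i n(\gamma_i)$. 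Monotonicity of $(f_n)$ then forces $f_N\geq 1-\epsilon$ on all of $\Gamma$, i.e.\ $A\phi_N\geq(1-\epsilon)b$ pointwise on $\Gamma$.

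Finally I set $\tilde\phi:=\phi_N/(1-\epsilon)\in C_+(X)$, which is admissible since $A\tilde\phi\geq b$ on $\Gamma$, and estimate
\[
\|\tilde\phi\|_{L^p(X)}=\frac{1}{1-\epsilon}\|\phi_N\|_{L^p(X)}\leq\frac{1}{1-\epsilon}\|\tilde\rho\|_{L^p(X)}\leq\frac{1+\epsilon}{1-\epsilon}\|\rho\|_{L^p(X)}.
\]
Choosing $\rho$ so that $\|\rho\|_{L^p(X)}^p$ is arbitrarily close to $\Mod_p(\Gamma,b)$ and letting $\epsilon\to 0$ concludes $\Mod_p^c(\Gamma,b)\leq\Mod_p(\Gamma,b)$. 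The technical heart is the Dini-style compactness argument for the increasing sequence of lsc functions $f_n$, which critically uses both the compactness of $\Gamma$ and the positive lower bound on the continuous function $b$; the continuity of $b$ itself is not needed for this step but ensures $f_n$ is lsc on $\Gamma$.
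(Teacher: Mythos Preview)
Your proof is correct and follows essentially the same strategy as the paper: Vitali--Carath\'eodory to get an lsc majorant, approximation from below by continuous functions, a compactness argument to obtain near-admissibility, then rescaling. The only difference is cosmetic---you use a Dini-type covering argument on the quotients $A\phi_n/b$, whereas the paper extracts a convergent subsequence of minimizers $\gamma_n$ of $A\tilde\rho_n-b$ and passes to the limit; both exploit the compactness of $\Gamma$ and the lower semicontinuity of $A\tilde\rho_n$ in the same way.
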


\begin{proof}
Clearly $\Mod_p(\Gamma,b)\leq\Mod_p^c(\Gamma,b)$. 
We show that $\Mod_p(\Gamma,b)\geq\Mod_p^c(\Gamma,b)$.

Let $\rho\in\A(\Gamma,b)$ and fix $\epsilon \in (0,\inf_{\gamma\in\Gamma} b(\gamma))$.
By the Vitali-Carath\'{e}odory theorem we can find a lower semi-continuous function $\Tilde{\rho}$ such that
\[ \rho\leq\Tilde{\rho}
\quad\text{and}\quad \|\Tilde{\rho}\|_{L^p(X)}\leq \|\rho\|_{L^p(X)}+\epsilon. \]
Moreover, by the lower semi-continuity of $\Tilde{\rho}$ we can find a sequence of nonnegative continuous functions $(\Tilde{\rho}_n)_{n=1}^\infty$ such that $\Tilde{\rho}_n\nearrow\Tilde{\rho}$.

We aim to show that the continuous function $\Tilde{\rho}_n$ is almost admissible for $\Gamma$ with respect to $b$, provided that $n$ is large enough. This can be achieved under the assumption that $\Gamma$ is compact and $b$ is continuous.
Let 
$$ a_n:
=\inf_{\gamma\in\Gamma}\Big(\int_\gamma\Tilde{\rho}_nds-b(\gamma)\Big)
=\inf_{\Gamma}\big(A\Tilde{\rho}_n-b\big). $$
Since $\Gamma$ is compact and $A\Tilde{\rho}_n-b$ is a lower semi-continuous function on $\Gamma$ (by Lemma \ref{lem:regularity-of-various-maps-on-Curv}), there exists $\gamma_n\in\Gamma$ such that
$$ a_n=\int_{\gamma_n}\Tilde{\rho}_nds-b(\gamma_n). $$
Moreover, we can find a subsequence of $(\gamma_n)_{n=1}^\infty$, still denoted by itself, and $\gamma\in\Gamma$ such that $\gamma_n\xrightarrow{n\to\infty}\gamma$.
Fix $k\in \N$. By using the lower semi-continuity of  $A\tilde{\rho}_k-b$ and the fact that $(\Tilde{\rho}_n)$ is an increasing sequence,
\[ \liminf_{n\to\infty}a_n
=\liminf_{n\to\infty}\Big(\int_{\gamma_n}\Tilde{\rho}_nds-b(\gamma_n)\Big)
\geq \liminf_{n\to\infty}\Big(\int_{\gamma_n}\Tilde{\rho}_kds-b(\gamma_n)\Big) 
\geq \int_{\gamma}\Tilde{\rho}_kds-b(\gamma). 
\]
This holds for all $k\in\N$, and by monotone convergence with $k\to\infty$,
\[
\liminf_{n\to\infty}a_n
\geq \int_{\gamma}\Tilde{\rho}ds-b(\gamma)\geq 0. 
\]
Now we have shown that there exists some index $N\in\N$ such that $a_N\geq-\epsilon$.

Finally, we denote $m:=\inf_\Gamma b>0$ and set $\tau=\frac{m}{m-\epsilon}\Tilde{\rho}_N$. Then $\tau\in\A(\Gamma,b)\cap C(X)$. Indeed, for any $\gamma\in\Gamma$
$$ \int_{\gamma}\tau ds
=\frac{m}{m-\epsilon}\int_\gamma \Tilde{\rho}_N ds
\geq\frac{m}{m-\epsilon}(b(\gamma)-\epsilon)
\geq b(\gamma). $$
Moreover, 
\begin{align*}
    \int_X\tau^pd\mu
    =\Big(\frac{m}{m-\epsilon}\Big)^p\|\Tilde{\rho}_N\|_{L^p(X)}^p
    \leq 
    \Big(\frac{m}{m-\epsilon}\Big)^p\|\Tilde{\rho}\|_{L^p(X)}^p
    \leq 
    \Big(\frac{m}{m-\epsilon}\Big)^p\Big(\int_X\rho^pd\mu+\epsilon\Big).
\end{align*}
It follows that 
\begin{align*}
    \Mod_p^c(\Gamma,b)
    \leq
    \Big(\frac{m}{m-\epsilon}\Big)^p\Big(\int_X\rho^pd\mu+\epsilon\Big).
\end{align*}
We let $\epsilon\to 0$ and take infimum over $\rho\in\A(\Gamma,b)$ to obtain that $\Mod_p^c(\Gamma,b)\leq\Mod_p(\Gamma,b)$.
\end{proof}

The following small lemma is needed in Section \ref{ssec:convergenge-of-etas}.

\begin{lemma} \label{lem:Mod-inequality}
Let $\Gamma\subset\Curv(X)$. If $b\colon\Gamma\to\R$ is bounded, then $\Mod_p(\Gamma,b)\leq \|b\|_\infty^p\Mod_p(\Gamma,1)$.
\end{lemma}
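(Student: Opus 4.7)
The plan is a one-step scaling argument. The key observation is that admissibility with respect to the constant function $1$ rescales cleanly into admissibility with respect to any bounded $b$.

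More precisely, I would take an arbitrary $\rho\in\A(\Gamma,1)$, i.e.\ a Borel nonnegative function on $X$ with $\int_\gamma\rho\,ds\geq 1$ for every $\gamma\in\Gamma$. Then I would set $\tilde{\rho}:=\|b\|_\infty\rho$ and check that $\tilde{\rho}\in\A(\Gamma,b)$. Indeed, for each $\gamma\in\Gamma$,
\[
\int_\gamma\tilde{\rho}\,ds=\|b\|_\infty\int_\gamma\rho\,ds\geq\|b\|_\infty\geq b(\gamma),
\]
where the last inequality uses that $b$ is bounded (so $b(\gamma)\leq\|b\|_\infty$). Note that the boundedness hypothesis is used crucially here; without it the scaling would fail.

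From this, plugging $\tilde{\rho}$ into the definition of $\Mod_p(\Gamma,b)$ gives
\[
\Mod_p(\Gamma,b)\leq\int_X\tilde{\rho}^p\,d\mu=\|b\|_\infty^p\int_X\rho^p\,d\mu.
\]
Taking the infimum over all $\rho\in\A(\Gamma,1)$ on the right-hand side yields $\Mod_p(\Gamma,b)\leq\|b\|_\infty^p\Mod_p(\Gamma,1)$, as desired.

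There is no real obstacle here: the only subtlety is that one must allow the case $\|b\|_\infty=0$ (in which case $b\leq 0$ and the zero function is admissible, giving $\Mod_p(\Gamma,b)=0$) and the case $\Mod_p(\Gamma,1)=\infty$ (in which case the inequality is trivial). Both are handled by the same scaling computation with the usual conventions.
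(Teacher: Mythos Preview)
Your proof is correct and is essentially identical to the paper's own argument: pick $\rho\in\A(\Gamma,1)$, set $\tilde{\rho}=\|b\|_\infty\rho$, observe $\tilde{\rho}\in\A(\Gamma,b)$, and take the infimum.
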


\begin{proof}
Let $\rho\in\A(\Gamma,1)$. Then $\tilde{\rho}:=\|b\|_\infty\rho\in\A(\Gamma,b)$, because
$$ \int_\gamma\tilde{\rho}ds=\|b\|_\infty\int_\gamma\rho ds\geq b(\gamma)
\quad\text{for all }\gamma\in\Gamma. $$
Thus
$$ \Mod_p(\Gamma,b)\leq \int_X\tilde{\rho}^pd\mu
=\|b\|_\infty^p\int_X\rho^pd\mu. $$
Take infimum over $\rho$ to obtain the desired inequality.
\end{proof}
%------------------------------
\section{\texorpdfstring{$p$}{p}-harmonic functions and a general modulus problem} \label{sec:energy}

In this section we prove that the minimal $p$-weak upper gradient of a $p$-harmonic function with prescribed boundary data is a weak minimizer of an appropriate generalized modulus problem. This requires us to take a small detour through Dirichlet problems.

We follow \cite{Bjorn2011} where the standing assumptions in the metric measure space $X$ for the validity of potential theory are the following: $1<p<\infty$ and $X=(X,d,\mu)$ is a complete, doubling $p$-Poincar\'{e} space.
Let $\Om\subset X$ be a bounded domain. We assume that $\mu(\partial\Om)=0$, and we consider the problem of minimizing the $p$-energy among Newtonian functions on $\Om$ that meet a prescribed boundary value on $\partial\Om$. More precisely, fix a Borel function $f\in N^{1,p}(\overline{\Om})$. 
We study the $p$-Dirichlet problem with boundary data $f$, i.e. the minimization problem
\begin{equation} \label{eq:energy-minimization-open}
    \inf\Big\{\int_{\Om} g_v^pd\mu:v-f|_\Om\in N^{1,p}_0(\Om)\Big\}.
\end{equation}
Note that the boundary value $f$ is taken in the Sobolev sense, as in the definition of obstacle problem \cite{Bjorn2011}*{Definition 7.1}.
It is well-known \cite{Shanmugalingam2001} (or \cite{Bjorn2011}*{Theorem 7.2, Proposition 7.14, Theorem 8.14}) that this problem has a unique minimizer $u$ that is $p$-harmonic in $\Om$ in the sense of Definition \ref{def:p-harmonic}. Conversely, every $p$-harmonic function $u\in N^{1,p}_{\rm loc}(\Omega)$ solves the $p$-Dirichlet problem with boundary data $u$ on all subdomains $\Omega'\subset \Omega$ compactly contained in $\Omega$.

For the statement of the generalized modulus problem, let $\Gamma$ denote all the curves inside $\overline{\Om}$ that join two boundary points of $\partial\Om$,
\begin{equation} \label{eq:Gamma-for-equivalence-proposition}
    \Gamma:=\{\gamma\in\Curv(\overline{\Om}):\gamma(0),\gamma(\ell_\gamma)\in\partial\Om\}.    
\end{equation}
We call such curves boundary-to-boundary curves in $\overline{\Om}$. Let $b\colon\Gamma\to\R$ be given by 
\begin{equation} \label{eq:b-for-equivalence-proposition}
    b(\gamma):=|f(\gamma(0))-f(\gamma(\ell_\gamma))| \quad\text{for all }\gamma\in\Gamma.    
\end{equation}
In other words, $b(\gamma)$ is the endpoint variation of $\gamma$ with respect to the boundary value $f$.
Consider the generalized modulus problem
\begin{equation} \label{eq:generalized-modulus}
    \Mod_p(\Gamma,b)=\inf\Big\{\int_{\overline{\Om}}\rho^pd\mu:\rho\in\A(\Gamma,b)\Big\}.
\end{equation}
By Lemma \ref{lem:primal-minimizer} there exists a weak minimizer $\rho^*\in\LB^p_+(\overline{\Om})$ of \eqref{eq:generalized-modulus}, which is unique as a member of $L^p(\overline{\Om})$.

The following proposition is the main result of this section. Its proof is postponed at the end of the section.

\begin{proposition} \label{prop:equivalence}
Let $X=(X,d,\mu)$ be a complete, doubling $p$-Poincar\'{e} space, $1<p<\infty$.
Suppose that $\Om\subset X$ is a bounded domain such that $\mu(\partial\Om)=0$, and fix $f\in N^{1,p}(\overline{\Om})$.
Let $\Gamma$ denote all boundary-to-boundary curves in $\overline{\Om}$ as in \eqref{eq:Gamma-for-equivalence-proposition} and let $b\colon\Gamma\to[0,\infty]$ be given by \eqref{eq:b-for-equivalence-proposition}. If $u$ is the $p$-harmonic minimizer of the energy minimization problem \eqref{eq:energy-minimization-open} and $\rho^*$ is the weak minimizer of the generalized modulus problem \eqref{eq:generalized-modulus}, then $g_u=\rho^*$ $\mu$-a.e. in $\Om$.
\end{proposition}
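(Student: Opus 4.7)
The plan is to prove $g_u=\rho^*$ in $L^p(\overline{\Om})$ by showing that $g_u$ is itself a weak minimizer of $\Mod_p(\Gamma,b)$ and then invoking the uniqueness in Lemma \ref{lem:primal-minimizer}. This reduces to establishing two inequalities relating the Dirichlet energy $\|g_u\|_{L^p(\Omega)}^p$ and the modulus $\Mod_p(\Gamma,b)$.

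For the easy direction $\|g_u\|_{L^p(\overline{\Om})}^p \geq \Mod_p(\Gamma,b)$, I would verify weak admissibility of $g_u$. Since $u-f\in N^{1,p}_0(\Omega)$, the functions $u$ and $f$ agree on $\partial\Omega$ outside a set $E\subset\partial\Omega$ of zero $p$-capacity, and standard potential theory ensures that the curves in $\Gamma$ with an endpoint in $E$ form a $p$-exceptional family. For the remaining $\gamma\in\Gamma$, the upper gradient inequality yields $b(\gamma)=|u(\gamma(0))-u(\gamma(\ell_\gamma))|\leq \int_\gamma g_u\,ds$; Lemma \ref{lem:relaxation-of-admissibility} then delivers the inequality.

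For the reverse direction, given $\rho\in\A(\Gamma,b)$ I would construct a Dirichlet competitor via the metric Hamilton--Jacobi formula
\[
v(x):=\inf\left\{f(\gamma(0))+\int_\gamma \rho\,ds:\gamma\in\Curv(\overline{\Om}),\ \gamma(0)\in\partial\Om,\ \gamma(\ell_\gamma)=x\right\},\quad x\in\overline{\Om}.
\]
A concatenation argument shows $|v(z)-v(x)|\leq \int_\sigma \rho\,ds$ for every rectifiable $\sigma$ from $x$ to $z$, so $\rho$ is an upper gradient of $v$. Testing the infimum with the constant curve at $x\in\partial\Om$ gives $v(x)\leq f(x)$, while admissibility of $\rho$ applied to boundary-to-boundary curves yields the opposite inequality, so $v=f$ on $\partial\Om$. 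After a standard truncation ensuring $v\in L^p$, the function $v$ is a valid Dirichlet competitor, and minimality of $u$ yields
\[
\|g_u\|_{L^p(\Omega)}^p\leq\|g_v\|_{L^p(\Omega)}^p\leq\|\rho\|_{L^p(\overline{\Om})}^p.
\]
Taking the infimum over $\rho\in\A(\Gamma,b)$ produces $\|g_u\|_{L^p(\Omega)}^p\leq\Mod_p(\Gamma,b)$. To pass from admissible to merely weakly admissible $\rho$ (which is all that is available for $\rho^*$ itself), I would first add $\tfrac{1}{k}h$, where $h$ is the density constructed in the proof of Lemma \ref{lem:closure} that renders exceptional curves inadmissible, and then let $k\to\infty$.

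Combining the two inequalities and using $\mu(\partial\Om)=0$ gives $\|g_u\|_{L^p(\overline{\Om})}^p=\Mod_p(\Gamma,b)$. Since $g_u$ is already weakly admissible, Lemma \ref{lem:primal-minimizer} identifies $g_u$ with $\rho^*$ $\mu$-a.e. The main obstacle is the construction of the competitor $v$: one must argue Borel measurability of the infimum, handle possible unboundedness of $f$ via a truncation compatible with the upper gradient $\rho$, and confirm the identity $v=f$ on $\partial\Om$ in the quasi-everywhere sense required for $v-f\in N^{1,p}_0(\Om)$.
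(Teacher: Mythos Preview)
Your proposal is correct and follows essentially the same route as the paper. The paper packages the two inequalities into an auxiliary Lemma (Lemma~\ref{lem:m1-equal-with-m2}) showing that the pointwise-boundary Dirichlet infimum equals $\Mod_p(\Gamma,b)$, using exactly your Hamilton--Jacobi competitor $w(x)=\inf_{y\in\partial\Om}\big(f(y)+\inf_\gamma\int_\gamma\rho\,ds\big)$, truncated via $v=\min\{w,\max_{\partial\Om}f\}$; the measurability issue you flag is handled by a Suslin-type argument from \cite{Jarvenpaa2007}*{Corollary~1.10}, and the paper's definition~\eqref{eq:N1p-with-zero-boundary-value} of $N^{1,p}_0(\Om)$ gives the boundary agreement pointwise rather than merely quasi-everywhere, so no capacity argument is needed there.
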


This proposition is proved with the aid of the following lemma, which connects the generalized modulus problem and the minimization problem of $p$-energy with continuous boundary data taken in the pointwise sense.

\begin{lemma} \label{lem:m1-equal-with-m2}
Under the assumptions of Proposition \ref{prop:equivalence},
$$ \inf\Big\{\int_{\overline{\Om}} g_v^pd\mu:v\in N^{1,p}({\overline{\Om}}),v=f\text{ on }\partial\Om\Big\}
=\inf\Big\{\int_{\overline{\Om}}\rho^pd\mu:\rho\in\A(\Gamma,b)\Big\}. $$
\end{lemma}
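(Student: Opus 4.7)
The plan is to prove both inequalities separately; write $m_1$ and $m_2$ for the left- and right-hand sides respectively.

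For $m_2 \leq m_1$ (the easier direction), consider any $v \in N^{1,p}(\overline{\Om})$ with $v = f$ on $\partial\Om$. The upper gradient inequality for $g_v$ holds for $p$-a.e.\ curve, and in particular for $p$-a.e.\ $\gamma \in \Gamma$ both endpoints lie in $\partial\Om$ where $v = f$, so
$$ \int_\gamma g_v \, ds \geq |v(\gamma(0)) - v(\gamma(\ell_\gamma))| = |f(\gamma(0)) - f(\gamma(\ell_\gamma))| = b(\gamma). $$
Thus $g_v$ is weakly admissible for $(\Gamma, b)$, and Lemma \ref{lem:relaxation-of-admissibility} gives $\Mod_p(\Gamma, b) \leq \int_{\overline{\Om}} g_v^p \, d\mu$. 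Infimizing over $v$ yields $m_2 \leq m_1$.

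For the harder direction $m_1 \leq m_2$, the plan is to build, from any $\rho \in \A(\Gamma, b)$ with $\int \rho^p \, d\mu < \infty$, a competitor $v$ for the LHS with $\int g_v^p \, d\mu \leq \int \rho^p \, d\mu$. The natural candidate is a McShane-type extension of $f|_{\partial\Om}$ using $\rho$ as gradient budget. Set
$$ d_\rho(y, x) := \inf\Bigl\{\int_\gamma \rho \, ds : \gamma \in \Curv(\overline{\Om}),\ \gamma(0) = y,\ \gamma(\ell_\gamma) = x\Bigr\} $$
(with the convention $\inf \emptyset = +\infty$) and define $v(x) := \sup_{y \in \partial\Om}\bigl(f(y) - d_\rho(y, x)\bigr)$. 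Then I would verify: (a) $v = f$ on $\partial\Om$: choosing $y = x$ gives $v(x) \geq f(x)$, while admissibility of $\rho$ on boundary-to-boundary curves gives $d_\rho(y, x) \geq b(\gamma) \geq f(y) - f(x)$ for $x, y \in \partial\Om$, hence $v(x) \leq f(x)$; (b) $\rho$ is an upper gradient of $v$ on $\overline{\Om}$: for any curve $\gamma$ from $x_1$ to $x_2$ and $y \in \partial\Om$, concatenation gives $d_\rho(y, x_2) \leq d_\rho(y, x_1) + \int_\gamma \rho \, ds$, and taking the supremum over $y$ (together with the symmetric inequality from reversing $\gamma$) yields $|v(x_1) - v(x_2)| \leq \int_\gamma \rho \, ds$. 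From (b), $g_v \leq \rho$ $\mu$-a.e.; once $v \in N^{1,p}(\overline{\Om})$ is in hand, $v$ belongs to the admissible class for the LHS, so infimizing over $\rho$ yields $m_1 \leq m_2$.

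The principal technical obstacle is verifying $v \in N^{1,p}(\overline{\Om})$, i.e.\ its measurability and $L^p$-integrability. Measurability can be arranged by first passing to a lower semi-continuous majorant $\widetilde\rho \geq \rho$ via Vitali--Carath\'eodory (as in Lemma \ref{lem:continuous-densities}); then $x \mapsto d_{\widetilde\rho}(y, x)$ is lower semi-continuous and the supremum reduces to a countable dense subset of $\partial\Om$. For the $L^p$-bound, I would reduce to bounded boundary data by truncating $f$ to $f_M := \max(-M, \min(M, f))$; since $b_M \leq b$, the same $\rho$ remains admissible for $(\Gamma, b_M)$, and the corresponding extension $v_M$, after a further truncation between $\pm M$, is bounded and hence in $L^p$ on the bounded domain $\overline{\Om}$. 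Passing to the limit $M \to \infty$ using Fuglede's lemma and the closure of the admissible class (Lemma \ref{lem:closure}) then recovers the general case.
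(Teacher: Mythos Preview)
Your overall strategy matches the paper's: both directions proceed the same way, and your McShane-type extension $v(x)=\sup_{y\in\partial\Om}(f(y)-d_\rho(y,x))$ is the same construction as the paper's $w(x)=\inf_{y\in\partial\Om}(f(y)+d_\rho(y,x))$, up to a sign convention. The verification that $v=f$ on $\partial\Om$ and that $\rho$ is an upper gradient of $v$ via concatenation is identical.

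Where your write-up diverges from the paper is in the handling of the two technical points, and in both cases the paper's treatment is cleaner while yours has gaps:

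\emph{Measurability.} Your claim that $x\mapsto d_{\tilde\rho}(y,x)$ is lower semi-continuous once $\tilde\rho$ is lsc is not justified: an infimum of lsc functions need not be lsc, and at best concatenation with short curves to nearby points would give upper semi-continuity, which in turn would need the space to be geodesic and $\tilde\rho$ locally bounded. The paper does not attempt any semi-continuity; it instead invokes a Suslin-type projection argument (citing \cite{Jarvenpaa2007}*{Corollary 1.10}) to obtain measurability of $w$ directly for Borel $\rho$. You should be aware that this is the non-trivial step.

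\emph{$L^p$-integrability.} Your plan to truncate $f$ to $f_M$, build $v_M$, and pass to the limit $M\to\infty$ via Fuglede and Lemma~\ref{lem:closure} does not work as stated: the functions $v_M$ satisfy $v_M=f_M$ on $\partial\Om$, not $v_M=f$, so they are not competitors for $m_1$, and it is unclear what limit object you are producing or how Fuglede applies. The paper avoids this entirely: since by construction $w\geq\min_{\partial\Om}f$, it simply sets $v:=\min\{w,\max_{\partial\Om}f\}$. Truncation by a constant preserves the upper gradient $\rho$ and the boundary values, so $v$ is bounded on a set of finite measure and hence in $N^{1,p}(\overline\Om)$ with $g_v\leq\rho$ immediately. (This uses that $f$ is bounded on $\partial\Om$; in the application $f=u$ is continuous on the compact set $\overline\Om$, so this is automatic.)
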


\begin{proof}
Let us denote
$$ m_1:=\inf\Big\{\int_{\overline{\Om}} g_v^pd\mu:v\in N^{1,p}({\overline{\Om}}),v=f\text{ on }\partial\Om\Big\} 
\quad\text{and}\quad 
m_2:=\inf\Big\{\int_{\overline{\Om}}\rho^pd\mu:\rho\in\A(\Gamma,b)\Big\}. $$

First, we show that $m_2\leq m_1$. 
Let us fix $v\in N^{1,p}(\overline{\Om})$ such that $v=f$ on $\partial\Om$. (If such $v$ does not exist, then $m_1=\infty$, and there is nothing to prove.)
Then
$$ |f(\gamma(0))-f(\gamma(\ell_\gamma))|\leq \int_\gamma g_vds \quad\text{for $p$-a.e. }\gamma\in\Gamma. $$ 
Lemma \ref{lem:relaxation-of-admissibility} allows us to conclude that
$$ m_2\leq \int_{\overline{\Om}}(g_v)^pd\mu. $$
We take infimum over $v\in N^{1,p}(\overline{\Om})$ to conclude that $m_2\leq m_1$.

Next, we show that $m_1\leq m_2$. We may assume that $m_2<\infty$ because otherwise there is nothing to prove. In this case $\A(\Gamma,b)\cap\LB^p(\overline{\Om})$ is non-empty.
Let $\rho\in\A(\Gamma,b)\cap\LB^p(\overline{\Om})$.
We claim that there exists $v\in N^{1,p}(\overline{\Om})$ such that $v=f$ on $\partial\Om$ and $\rho$ is an upper gradient of $v$.
If we can prove this, then by the definition of the minimal $p$-weak upper gradient
$$ \int_{\overline{\Om}}g_v^pd\mu\leq\int_{\overline{\Om}}\rho^pd\mu. $$
It follows that
$$ m_1\leq\int_{\overline{\Om}}\rho^pd\mu
\quad\text{for all }\rho\in\A(\Gamma,b), $$
which implies that $m_1\leq m_2$.

We define the function $v$ as follows. For any $x\in\overline{\Om}$, let
$$ w(x):=\inf_{y\in\partial\Om}\Big(f(y)+\inf_\gamma\int_\gamma\rho ds\Big) $$
where the latter infimum is taken over all curves $\gamma\in\Curv(\overline{\Om})$ that join $y$ to $x$. 
Note that $w$ is measurable by the proof of \cite{Jarvenpaa2007}*{Corollary 1.10}. (The statement in \cite{Jarvenpaa2007} is slightly different, but the same Suslin-type argument applies by adding $f(y)$.) 
We set
$$ v(x):=\min\{w(x),\max_{\partial\Om}f\}. $$

We first study $w$. We check that $\rho$ is an upper gradient of $w$. Fix $x,y\in\overline{\Om}$. We aim to show that
$$ |w(x)-w(y)|\leq \int_\gamma\rho ds $$
for every curve $\gamma\in\Curv(\overline{\Om})$ that joins $x$ and $y$.
Without loss of generality we may assume that $w(x)\geq w(y)$. Let $y_0\in\partial\Om$ and let $\gamma_0$ be curve that joins $y_0$ to $y$. Let $\gamma$ be a curve that joins $y$ to $x$. Then by concatenating these curves and the definition of $w$ we get
\begin{align*}
    w(x)\leq f(y_0)+\int_{\gamma_0}\rho ds+\int_\gamma\rho ds.
\end{align*}
We can take infimum over $\gamma_0$ and $y_0$ to obtain that 
\begin{align*}
    w(x)\leq w(y)+\int_\gamma\rho ds
\end{align*}
for all curves $\gamma$ that join $y$ to $x$, which yields the desired inequality \eqref{eq:upper-gradient}.

Let us check that $w$ takes boundary value $f$. Fix $x\in\partial\Om$. Clearly, by taking a constant curve,
$$ w(x)=\inf_{y\in\partial\Om}\Big(f(y)+\inf_\gamma\int_\gamma\rho ds\Big)\leq f(x). $$
For the opposite inequality $w(x)\geq f(x)$, we employ the fact that $\rho\in\A(\Gamma,b)$ to find that
$$ |f(x)-f(y)|\leq \int_\gamma\rho ds $$
for all $y\in\partial\Om$ and for all curves $\gamma$ that join $x$ to $y$. Equivalently
$$ f(y)+|f(x)-f(y)|\leq f(y)+\int_\gamma\rho ds $$
We take infimum over $\gamma$ and $y$ to obtain
$$ \inf_{y\in\partial\Om}(f(y)+|f(x)-f(y)|)\leq w(x). $$
It follows that $f(x)\leq w(x)$, as desired. We conclude that $w(x)=f(x)$ for all $x\in\partial\Om$.

Now $w\colon\overline{\Om}\to [\min_{\partial\Om}f,\infty]$ is a measurable function such that $w=f$ on $\partial\Om$ and it has an upper gradient $\rho\in\LB^p(\overline{\Om})$. Consequently $v=\min\{w,\max_{\partial\Om}f\}$ is a Borel function that satisfies $v=f$ on $\partial\Om$ and $v$ has an upper gradient $\rho$, see \cite{Bjorn2011}*{Proof of Theorem 1.20}. Since $v$ is a bounded measurable function on a set of finite measure, we conclude that $v\in \LB^p(\overline{\Om})$ and moreover $v\in N^{1,p}(\overline{\Om})$.
\end{proof}

We now give the proof of Proposition \ref{prop:equivalence}.

\begin{proof}[Proof of Proposition \ref{prop:equivalence}]
Let us first show that the value of the energy minimization problem \eqref{eq:energy-minimization-open} equals the value of the generalized modulus problem \eqref{eq:generalized-modulus}.

Let $v\colon\Om\to[-\infty,\infty]$ be a function such that $v-f|_{\Om}\in N^{1,p}_0(\Om)$. 
By the definition of $N^{1,p}_0(\Om)$ in \eqref{eq:N1p-with-zero-boundary-value} we find an extension $\overline{v}\in N^{1,p}(\overline{\Om})$ of $v$ such that $\overline{v}=f$ on $\partial\Om$. Since $\mu(\partial\Om)=0$ we obtain that
\begin{equation*} 
    \int_{\Om} g_v^pd\mu
    =\int_{\overline{\Om}} g_{\overline{v}}^pd\mu
    \geq 
    \inf\Big\{\int_{\overline{\Om}} g_v^pd\mu:v\in N^{1,p}({\overline{\Om}}),v=f\text{ on }\partial\Om\Big\}.
\end{equation*}
We take infimum over $v$ to conclude that
\begin{equation} \label{eq:minimization-ineq} 
    \inf\Big\{\int_{\Om} g_v^pd\mu:v-f|_\Om\in N^{1,p}_0(\Om)\Big\}
    \geq 
    \inf\Big\{\int_{\overline{\Om}} g_v^pd\mu:v\in N^{1,p}({\overline{\Om}}),v=f\text{ on }\partial\Om\Big\}.
\end{equation}
For the converse inequality, let $v\in N^{1,p}(\overline{\Om})$ be such that $v=f$ on $\partial\Om$. Then by the definition of $N^{1,p}_0(\Om)$ in \eqref{eq:N1p-with-zero-boundary-value} we have that $(v-f)|_{\Om}\in N^{1,p}_0(\Om)$. Since $\mu(\partial\Om)=0$ we obtain that 
\begin{align*}
    \int_{\overline{\Om}}g_v^pd\mu
    =\int_{\Om}g_{v|_\Om}^pd\mu
    \geq     \inf\Big\{\int_{\Om} g_v^pd\mu:v-f|_\Om\in N^{1,p}_0(\Om)\Big\}.
\end{align*}
Take infimum over $v$ to obtain that
\begin{equation} \label{eq:minimization-converse}
    \inf\Big\{\int_{\overline{\Om}} g_v^pd\mu:v\in N^{1,p}({\overline{\Om}}),v=f\text{ on }\partial\Om\Big\}
    \geq
    \inf\Big\{\int_{\Om} g_v^pd\mu:v-f|_\Om\in N^{1,p}_0(\Om)\Big\}.
\end{equation}
Putting the above inequalities \eqref{eq:minimization-ineq} and \eqref{eq:minimization-converse} together with Lemma \ref{lem:m1-equal-with-m2} yields that
\begin{align*}
    \inf\Big\{\int_{\Om} g_v^pd\mu:v-f|_\Om\in N^{1,p}_0(\Om)\Big\}
    &= 
    \inf\Big\{\int_{\overline{\Om}} g_v^pd\mu:v\in N^{1,p}({\overline{\Om}}),v=f\text{ on }\partial\Om\Big\} \\
    &=
    \inf\Big\{\int_{\overline{\Om}}\rho^pd\mu:\rho\in\A(\Gamma,b)\Big\}.
\end{align*}
We conclude that
$$ \int_\Om g_u^pd\mu=\int_{\overline{\Om}}(\rho^*)^pd\mu. $$

To prove that $g_u=\rho^*$ we first observe that $g_u$ is weakly admissible for $\Gamma$. Indeed, $g_u$ is a $p$-weak upper gradient for $u$ and $u$ can be extended to a function $\overline{u}\in N^{1,p}(\overline{\Om})$ such that $\overline{u}=f$ on $\partial\Omega$. Then, the fact that $\Mod_p(\Gamma,b)=\int g_u^p d\mu$ together with the  uniqueness of the weak minimizer (Lemma \ref{lem:primal-minimizer}) implies that $g_u=\rho^*$ holds $\mu$-a.e. in $\Om$. 
\end{proof}
%------------------------------
\section{Duality}\label{sec:dual}
In this section we apply the scheme of convex optimization to derive the dual problem of the generalized modulus problem. 
Throughout this section, let $X$ be a compact metric measure space.
%------------------------------
\subsection{Primal problem and Lagrangian function}
Let $\Gamma\subset\Curv(X)$ be Borel and $b\in\Bor(\Gamma)$ be bounded.
Our primal problem is the minimization problem
$$ \big(\Mod_p(\Gamma,b)\big)^{1/p}=\inf_{\rho\in\A(\Gamma,b)}\|\rho\|_{L^p(X)} $$
or the continuous version
$$ \big(\Mod_p^c(\Gamma,b)\big)^{1/p}=\inf_{\rho\in\A(\Gamma,b)\cap C(X)}\|\rho\|_{L^p(X)}. $$
The purpose of this section is to prove that these problems are equivalent to finding a saddle point of Lagrangian function $\Lag\colon \LB^p_+(X)\times \M_+(\Gamma)\to[-\infty,\infty)$, given by
\begin{align} \label{eq:Lagrangian-new}
    \Lag(\rho,\eta):=\|\rho\|_{L^p(X)}-\int_{\Gamma}(\int_\gamma\rho ds-b(\gamma))d\eta. 
\end{align}
We point out that we may rewrite Lagrangian \eqref{eq:Lagrangian-new} by using the notation introduced in Section \ref{ssec:A-and-its-transpose} as
\begin{align*}
    \Lag(\rho,\eta)
    &=\|\rho\|_{L^p(X)}-\la A\rho-b,\eta\ra \\
    &=\|\rho\|_{L^p(X)}-\la\rho,A^\intercal\eta\ra+\la b,\eta\ra. 
\end{align*}

First we state an auxiliary lemma characterizing admissibility in terms of the Lagrangian.

\begin{lemma} \label{lem:primal-penalty}
Let $X$ be compact and $\Gamma\subset \Curv(X)$ be Borel and $b\in\Bor(\Gamma)$ be bounded. Then for any $\rho\in\mathfrak{B}_+(X)$
\begin{equation} \label{eq:primal-penalty}
    \sup_{\eta\in\M_+(\Gamma)}(-\la A\rho-b,\eta\ra)=
    \begin{cases}
        0&\quad\text{if }\rho\in\A(\Gamma,b); \\
        \infty &\quad\text{otherwise}.
    \end{cases}
\end{equation}
\end{lemma}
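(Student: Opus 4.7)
The plan is to treat the two cases directly by exhibiting explicit measures that achieve the claimed values.

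\textbf{Case 1: $\rho \in \A(\Gamma,b)$.} In this case $(A\rho)(\gamma) \geq b(\gamma)$ for every $\gamma \in \Gamma$, so the integrand $b - A\rho$ is non-positive on $\Gamma$. Hence for every $\eta \in \M_+(\Gamma)$,
\[
-\la A\rho - b, \eta\ra = \int_\Gamma (b - A\rho)\,d\eta \leq 0,
\]
while taking $\eta$ equal to the zero measure attains the value $0$. Therefore the supremum equals $0$.

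\textbf{Case 2: $\rho \notin \A(\Gamma,b)$.} By definition of admissibility, there exists some $\gamma_0 \in \Gamma$ with $c_0 := b(\gamma_0) - (A\rho)(\gamma_0) > 0$. For each $t > 0$, consider the scaled Dirac measure $\eta_t := t \delta_{\gamma_0}$, which lies in $\M_+(\Gamma)$ (the singleton $\{\gamma_0\}$ is compact in $\Gamma$, so $\delta_{\gamma_0}$ is a finite Radon measure). Then
\[
-\la A\rho - b, \eta_t\ra = t\bigl(b(\gamma_0) - (A\rho)(\gamma_0)\bigr) = t c_0 \xrightarrow{t\to\infty} \infty,
\]
so the supremum equals $+\infty$.

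Combining the two cases establishes the dichotomy \eqref{eq:primal-penalty}. The only minor technicality is making sure that $\delta_{\gamma_0} \in \M_+(\Gamma)$, which is immediate since $\Gamma$ is a Borel subset of the metric space $\Curv(X)$ and Dirac measures at points are automatically finite Radon measures; the Borel measurability of $b - A\rho$ required to form the integral is guaranteed by the hypothesis on $b$ together with Lemma~\ref{lem:regularity-of-various-maps-on-Curv}(3). I do not anticipate a genuine obstacle here — this is the standard ``primal penalty'' reformulation that rewrites the constraint $A\rho \geq b$ as the supremum of a linear functional over $\M_+(\Gamma)$, exactly as in the textbook treatment of Lagrangian duality.
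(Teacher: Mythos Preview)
Your proof is correct and essentially identical to the paper's own argument: both treat the admissible case via the zero measure and the non-admissible case by scaling a Dirac mass at a violating curve. The only cosmetic difference is that you scale by a continuous parameter $t>0$ where the paper uses $n\in\mathbb{N}$.
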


\begin{proof}
If $\rho\in\A(\Gamma,b)$, then $A\rho\geq b$. Let $\eta_0$ denote the zero measure. Then 
\begin{align*}
    0=-\la A\rho-b,\eta_0\ra
    \leq\sup_{\eta\in\M_+(\Gamma)}(-\la A\rho-b,\eta\ra)\leq 0.
\end{align*}
On the other hand, if $\rho\in\mathfrak{B}_+(X)\setminus\A(\Gamma,b)$ we can find $\gamma\in\Gamma$ such that $~(A\rho)(\gamma)<b(\gamma)$. Let $\delta_\gamma$ denote the Dirac delta measure at $\gamma$. Then 
\begin{align*}
    \sup_{\eta\in\M_+(\Gamma)}(-\la A\rho-b,\eta\ra)
    \geq \sup_{n\in\N}(-\la A\rho-b,n\delta_\gamma\ra)
    =\sup_{n\in\N}\big(-n\big((A\rho)(\gamma)-b(\gamma)\big)\big)
    =\infty.
\end{align*}
We conclude that \eqref{eq:primal-penalty} holds.
\end{proof}

The previous lemma allows us to formulate the primal problems in terms of Lagrangian function $\Lag$.

\begin{corollary} \label{cor:primal-both}
Let $X$ be compact and $\Gamma\subset \Curv(X)$ be Borel and $b\in\Bor(\Gamma)$ be bounded. We have 
$$ \inf_{\rho\in\A(\Gamma,b)}\|\rho\|_{L^p(X)}
=\inf_{\rho\in \LB^p_+(X)}\sup_{\eta\in\M_+(\Gamma)}\Lag(\rho,\eta). $$
and 
$$ \inf_{\rho\in\A(\Gamma,b)\cap C(X)}\|\rho\|_{L^p(X)}
=\inf_{\rho\in C_+(X)}\sup_{\eta\in\M_+(\Gamma)}\Lag(\rho,\eta). $$
\end{corollary}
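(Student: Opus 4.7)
The plan is to apply Lemma \ref{lem:primal-penalty} pointwise in $\rho$ to compute the inner supremum $\sup_{\eta\in\M_+(\Gamma)}\Lag(\rho,\eta)$, and then observe that the resulting expression for the outer infimum in $\rho$ matches the claimed primal problem. The Lagrangian splits as $\Lag(\rho,\eta)=\|\rho\|_{L^p(X)}-\la A\rho-b,\eta\ra$, so for any fixed $\rho$,
\[
\sup_{\eta\in\M_+(\Gamma)}\Lag(\rho,\eta)=\|\rho\|_{L^p(X)}+\sup_{\eta\in\M_+(\Gamma)}\bigl(-\la A\rho-b,\eta\ra\bigr).
\]
Lemma \ref{lem:primal-penalty} then identifies the second supremum as $0$ when $\rho\in\A(\Gamma,b)$ and $+\infty$ otherwise. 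The lemma is stated for $\rho\in\Bor_+(X)$, which covers both the $\LB^p_+(X)$ and the $C_+(X)$ case used below.

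For the first identity, I would restrict to $\rho\in\LB^p_+(X)$: the infeasible $\rho$ (i.e. $\rho\notin\A(\Gamma,b)$) make the inner sup equal to $+\infty$ and can be discarded, so
\[
\inf_{\rho\in \LB^p_+(X)}\sup_{\eta\in\M_+(\Gamma)}\Lag(\rho,\eta)
=\inf_{\rho\in\A(\Gamma,b)\cap \LB^p_+(X)}\|\rho\|_{L^p(X)}.
\]
To recover the infimum over all of $\A(\Gamma,b)$ on the right-hand side of the claim, I note that any $\rho\in\A(\Gamma,b)\setminus\LB^p_+(X)$ satisfies $\|\rho\|_{L^p(X)}=\infty$, hence does not contribute to the infimum. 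This yields
\[
\inf_{\rho\in\A(\Gamma,b)}\|\rho\|_{L^p(X)}
=\inf_{\rho\in\A(\Gamma,b)\cap\LB^p_+(X)}\|\rho\|_{L^p(X)}
=\inf_{\rho\in\LB^p_+(X)}\sup_{\eta\in\M_+(\Gamma)}\Lag(\rho,\eta),
\]
which is the first equality.

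The second identity is proved in exactly the same way, with $\LB^p_+(X)$ replaced by $C_+(X)$ throughout. Here the identification $\sup_{\eta}\Lag(\rho,\eta)=\|\rho\|_{L^p(X)}$ for $\rho\in\A(\Gamma,b)\cap C_+(X)$ and $+\infty$ otherwise follows again from Lemma \ref{lem:primal-penalty} (continuous functions are Borel). Since $X$ is compact, every $\rho\in C_+(X)$ is bounded and hence lies in $\LB^p_+(X)$, so no finiteness issue arises on the right-hand side. There is essentially no obstacle here: the content is entirely in Lemma \ref{lem:primal-penalty}, and the corollary is a direct bookkeeping step expressing the admissibility constraint as an inner supremum, i.e. the standard convex-optimization trick of realizing an indicator of a constraint set as a sup over Lagrange multipliers.
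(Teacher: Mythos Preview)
Your proof is correct and follows essentially the same approach as the paper's own proof: both split off $\|\rho\|_{L^p(X)}$ from the Lagrangian, invoke Lemma \ref{lem:primal-penalty} to evaluate the inner supremum as an indicator of admissibility, and then observe that the outer infimum collapses to the primal problem. You are slightly more explicit than the paper in justifying why the infimum over $\A(\Gamma,b)$ agrees with that over $\A(\Gamma,b)\cap\LB^p_+(X)$, but this is the same argument ``mutatis mutandis''.
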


\begin{proof}
For the first equality, by Lemma \ref{lem:primal-penalty} it is easy to see that
\begin{align*}
    \inf_{\rho\in\A(\Gamma,b)}\|\rho\|_{L^p(X)} 
    &=
    \inf_{\rho\in\LB^p_+(X)}\Big(\|\rho\|_{L^p(X)}+\sup_{\eta\in\M_+(\Gamma)}(-\la A\rho-b,\eta\ra)\Big) \\
    &=
    \inf_{\rho\in\LB^p_+(X)}\sup_{\eta\in\M_+(\Gamma)}\Lag(\rho,\eta).
\end{align*}
The proof of the second equality is the same, mutatis mutandis.
\end{proof}

The following lemma says that under some additional assumptions for the continuous modulus problem we can restrict the set of measures over which we maximize the Lagrangian.
This restriction is crucial to obtain strong duality in Section \ref{ssec:strong-duality-new} below.

\begin{lemma} \label{lem:primal}
Let $X$ be compact and $\Gamma\subset \Curv(X)$ be Borel and $b\in\Bor(\Gamma)$ be bounded. Suppose that \\
$0<\Mod_p^c(\Gamma,b)<\infty$ and $\inf_\Gamma b>0$.
Then
$$ \inf_{\rho\in\A(\Gamma,b)\cap C(X)}\|\rho\|_{L^p(X)}
=\inf_{\rho\in C_+(X)}\sup_{\stackrel{\eta\in\M_+(\Gamma)}{\eta(\Gamma)\leq M}} \Lag(\rho,\eta) $$
where 
$$ M:=\frac{\big(\Mod_p^c(\Gamma,b)\big)^{1/p}}{\inf_{\Gamma}b}. $$
\end{lemma}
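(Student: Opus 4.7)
The plan is to split the desired identity into two inequalities. The bound
$$
\inf_{\rho\in C_+(X)}\sup_{\stackrel{\eta\in\M_+(\Gamma)}{\eta(\Gamma)\leq M}} \Lag(\rho,\eta) \;\leq\; \inf_{\rho\in\A(\Gamma,b)\cap C(X)}\|\rho\|_{L^p(X)}
$$
is immediate from the second equality of Corollary \ref{cor:primal-both}, because restricting to a smaller collection of $\eta$ can only decrease the inner supremum. For the reverse inequality I would show that for every $\rho\in C_+(X)$,
$$
\sup_{\stackrel{\eta\in\M_+(\Gamma)}{\eta(\Gamma)\leq M}}\Lag(\rho,\eta)\;\geq\;(\Mod_p^c(\Gamma,b))^{1/p}.
$$

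Fix $\rho\in C_+(X)$ and set $s:=\sup_{\gamma\in\Gamma}(b(\gamma)-(A\rho)(\gamma))_+$, which is finite because $b$ is bounded and $A\rho\geq 0$. Rewriting $\Lag(\rho,\eta)=\|\rho\|_{L^p(X)}+\int_\Gamma(b-A\rho)\,d\eta$ and testing against $\eta=0$ and against Dirac measures $M\delta_{\gamma_0}$ whose base points $\gamma_0\in\Gamma$ approach the supremum $s$, one finds
$$
\sup_{\stackrel{\eta\in\M_+(\Gamma)}{\eta(\Gamma)\leq M}}\Lag(\rho,\eta)=\|\rho\|_{L^p(X)}+Ms,
$$
so the task reduces to showing $\|\rho\|_{L^p(X)}+Ms\geq(\Mod_p^c(\Gamma,b))^{1/p}$.

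The main step is then an explicit construction that turns $\rho$ into an admissible continuous density. Fix $\epsilon>0$ and pick a near-minimizer $\rho_0\in\A(\Gamma,b)\cap C(X)$ with $\|\rho_0\|_{L^p(X)}\leq(\Mod_p^c(\Gamma,b))^{1/p}+\epsilon$, which exists since $\Mod_p^c(\Gamma,b)<\infty$. I claim that
$$
\tilde\rho:=\rho+\frac{s}{\inf_\Gamma b}\,\rho_0 \;\in\; \A(\Gamma,b)\cap C(X).
$$
Indeed, for any $\gamma\in\Gamma$ the definition of $s$ gives $A\rho(\gamma)\geq b(\gamma)-s$, while admissibility of $\rho_0$ together with $b(\gamma)\geq\inf_\Gamma b$ gives $\tfrac{s}{\inf_\Gamma b}A\rho_0(\gamma)\geq s$; summing yields $A\tilde\rho(\gamma)\geq b(\gamma)$. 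Therefore
$$
(\Mod_p^c(\Gamma,b))^{1/p}\leq\|\tilde\rho\|_{L^p(X)}\leq\|\rho\|_{L^p(X)}+\frac{s}{\inf_\Gamma b}\bigl((\Mod_p^c(\Gamma,b))^{1/p}+\epsilon\bigr),
$$
and sending $\epsilon\to 0$ together with the definition of $M$ completes the proof. The main obstacle is identifying this convex correction $\tfrac{s}{\inf_\Gamma b}\rho_0$ and verifying its additive admissibility; this is exactly the place where the hypotheses $\Mod_p^c(\Gamma,b)<\infty$ and $\inf_\Gamma b>0$ are used in tandem.
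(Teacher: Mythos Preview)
Your proof is correct and takes a genuinely different route from the paper. The paper proves the reverse inequality by a three-case analysis on the size of $\|\rho\|_{L^p(X)}$ relative to $E:=(\Mod_p^c(\Gamma,b))^{1/p}$: if $\|\rho\|_{L^p(X)}\geq E$ one tests $\eta=0$; if $0<\|\rho\|_{L^p(X)}<E$ one observes that for each $\epsilon>0$ the rescaled function $\tfrac{E}{(1+\epsilon)\|\rho\|_{L^p}}\rho$ is not admissible, picks a curve $\gamma_\epsilon$ witnessing this, and tests the Dirac measure $\tfrac{E}{b(\gamma_\epsilon)}\delta_{\gamma_\epsilon}$; if $\rho\equiv 0$ one tests any scaled Dirac measure. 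Your argument bypasses the case split by first computing the constrained supremum in closed form as $\|\rho\|_{L^p(X)}+Ms$ and then producing the explicit admissible correction $\tilde\rho=\rho+\tfrac{s}{\inf_\Gamma b}\,\rho_0$. This is cleaner and makes transparent why the particular value of $M$ is the right threshold: the cost $\tfrac{s}{\inf_\Gamma b}\|\rho_0\|_{L^p}$ of the correction collapses exactly to $Ms$ as $\epsilon\to 0$. The paper's argument is marginally more self-contained in that it never invokes an auxiliary near-minimizer $\rho_0$, only the value of the infimum; but since $\Mod_p^c(\Gamma,b)<\infty$ is assumed, this distinction is cosmetic.
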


\begin{proof}
It follows from Corollary \ref{cor:primal-both} that
\begin{align*}
    \inf_{\rho\in\A(\Gamma,b)\cap C(X)}\|\rho\|_{L^p(X)} 
    \geq
    \inf_{\rho\in C_+(X)}\sup_{\stackrel{\eta\in\M_+(\Gamma)}{\eta(\Gamma)\leq M}}\Lag(\rho,\eta).
\end{align*}
In order to prove the converse inequality we show that for all $\rho\in C_+(X)$ 
\begin{equation} \label{eq:a-goal-primal}
    \inf_{\tau\in\A(\Gamma,b)\cap C(X)}\|\tau\|_{L^p(X)}\leq\sup_{\stackrel{\eta\in\M_+(\Gamma)}{\eta(\Gamma)\leq M}}\Lag(\rho,\eta).
\end{equation}
Let us denote
$$ E:=\inf_{\tau\in\A(\Gamma,b)\cap C(X)}\|\tau\|_{L^p(X)}. $$
Fix $\rho\in C_+(X)$. We consider three cases, $\|\rho\|_{L^p(X)}\geq E$, $0<\|\rho\|_{L^p(X)}<E$ and $\|\rho\|_{L^p(X)}=0$.

In the first case $\|\rho\|_{L^p(X)}\geq E$. Let $\eta_0$ denote the zero measure. Then
$$ E\leq \|\rho\|_{L^p(X)}-\la A\rho-b,\eta_0\ra \leq \sup_{\stackrel{\eta\in\M_+(\Gamma)}{\eta(\Gamma)\leq M}}\Lag(\rho,\eta) $$
This yields the desired inequality \eqref{eq:a-goal-primal} in the first case.

In the second case $0<\|\rho\|_{L^p(X)}<E$ and thus $\rho\notin\A(\Gamma,b)$. Moreover, given any $\epsilon>0$ small, if \\
$\lambda_\epsilon=\frac{E}{(1+\epsilon)\|\rho\|_{L^p(X)}}$ then $\lambda_\epsilon\rho\notin\A(\Gamma,b)$.
Consequently there exists a curve $\gamma_\epsilon\in\Gamma$ such that
$$ \int_{\gamma_\epsilon}\rho ds<\frac{(1+\epsilon)\|\rho\|_{L^p(X)}b(\gamma_\epsilon)}{E}. $$
Let $\delta_{\gamma_\epsilon}$ denote the Dirac delta measure at $\gamma_\epsilon$ and let $\eta_\epsilon:=\frac{E}{b(\gamma_\epsilon)}\delta_{\gamma_\epsilon}$.
Note that $b(\gamma_\epsilon)>0$ because $\inf_\Gamma b>0$.
Then $\eta_\epsilon(\Gamma)\leq M$ and
\begin{align*}
    \sup_{\stackrel{\eta\in\M_+(\Gamma)}{\eta(\Gamma)\leq M}}\Lag(\rho,\eta) 
    \geq 
    \Lag(\rho,\eta_\epsilon)
    =\|\rho\|_{L^p(X)}-\frac{E}{b(\gamma_\epsilon)}\big(\int_{\gamma_\epsilon}\rho ds-b(\gamma_\epsilon)\big)
    \geq -\epsilon\|\rho\|_{L^p(X)}+E.
\end{align*}
Let $\epsilon\to0$ to conclude the desired inequality \eqref{eq:a-goal-primal} in the second case.

In the third case $\|\rho\|_{L^p(X)}=0$ and thus $\rho=0$ everywhere in $X$ by the continuity of $\rho$.
Let us fix any curve $\gamma\in \Gamma$ and set $\Tilde{\eta}:=\frac{E}{b(\gamma)}\delta_\gamma$. Then $\Tilde{\eta}(\Gamma)\leq M$ and
\begin{align*}
    \sup_{\stackrel{\eta\in\M_+(\Gamma)}{\eta(\Gamma)\leq M}}\Lag(\rho,\eta) 
    \geq \Lag(\rho,\Tilde{\eta})=E. 
\end{align*}
This allows us to conclude the desired inequality \eqref{eq:a-goal-primal} in the third case.

Having proved \eqref{eq:a-goal-primal} in all three cases we conclude that it holds for any $\rho\in C_+(X)$. The proof is finished.
\end{proof}
%------------------------------
\subsection{Dual problem}
Let $\Gamma\subset\Curv(X)$ be Borel and $b\in\Bor(\Gamma)$ be bounded. Given the Lagrangian formulation, the dual problem can be obtained by flipping the order of the supremum and infimum. 
Thus, from Corollary \ref{cor:primal-both} we get that the dual problem of the generalized modulus problem is to find
\begin{align*}
    \sup_{\eta\in\M_+(\Gamma)}\inf_{\rho\in\LB^p_+(X)}\Lag(\rho,\eta)
    &=\sup_{\eta\in\M_+(\Gamma)}\big(\la b,\eta\ra+\inf_{\rho\in\LB^p_+(X)}\big(\|\rho\|_{L^p(X)}-\la \rho,A^\intercal\eta\ra\big)\big).
\end{align*}
If in addition $\Gamma$ is compact and $\inf_\Gamma b>0$ 
then by Lemma \ref{lem:primal} the dual problem of the continuous modulus problem is
\begin{align*}
    \sup_{\stackrel{\eta\in\M_+(\Gamma)}{\eta(\Gamma)\leq M}}\inf_{\rho\in C_+(X)}\Lag(\rho,\eta)
    &=\sup_{\stackrel{\eta\in\M_+(\Gamma)}{\eta(\Gamma)\leq M}}\big(\la b,\eta\ra+\inf_{\rho\in C_+(X)}\big(\|\rho\|_{L^p(X)}-\la \rho,A^\intercal\eta\ra\big)\big).
\end{align*}
We show that these dual problems, that are formulated as a problem of finding a saddle point of Lagrangian $\Lag$, have equivalent formulations as maximization problems over a suitable subset of measures in $\M_+(\Gamma)$. See Lemma \ref{lem:dual-penalty} and Corollary \ref{cor:dual-both} below. Moreover, we find a maximizer of the dual problem. See Lemma \ref{lem:dual-maximizer} below.

The "suitable subset of measures" is given by the following definition.

\begin{definition} \label{def:B}
Let $X$ be compact and let $\Gamma\subset\Curv(X)$ and $1<q<\infty$. We define 
$$ \B(\Gamma):=\Big\{\eta\in\M_+(\Gamma):\Big\|\frac{d(A^\intercal\eta)}{d\mu}\Big\|_{L^q(X)}\leq 1\Big\}. $$
\end{definition}

\begin{remark}
Using the terminology in \cite{Savare2022}*{Definition 4.2.2}, the measures in the class $\B(\Gamma)$ are called dynamic plans with barycenter in the unit ball of $L^q(X)$.
\end{remark}

In the following lemma we record the fact that the collection $\B(\Gamma)$ is closed with respect to weak convergence.

\begin{lemma} \label{lem:B-closed-wrt-weak-convergence-to-Radon-measure}
Let $X$ be compact and $\Gamma\subset\Curv(X)$. If $(\eta_n)$ is a sequence of measures in $\B(\Gamma)$ that converges weakly to some $\eta\in\M_+(\Gamma)$, then $\eta\in\B(\Gamma)$.
\end{lemma}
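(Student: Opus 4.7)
The plan is to reduce the statement directly to Lemma \ref{lem:B-weakly-closed-and-double-limit-in-dual-pairing}(1), whose conclusion is exactly the defining condition of $\B(\Gamma)$. In particular, I would not re-prove absolute continuity or the $L^q$-bound from scratch: the combination of the lower semi-continuity result in Lemma \ref{lem:lsc-of-dual-pairing} and the weak-$L^q$-compactness argument already recorded in the proof of Lemma \ref{lem:B-weakly-closed-and-double-limit-in-dual-pairing}(1) does all the analytic work for us.

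The first step is to verify that weak convergence $\eta_n \to \eta$ in $\M_+(\Gamma)$, in the sense adopted in the paper (i.e.\ $\int h\,d\eta_n \to \int h\,d\eta$ for all bounded $h \in C(\Gamma)$), is sufficient to invoke Lemma \ref{lem:B-weakly-closed-and-double-limit-in-dual-pairing}(1). The hypothesis of that lemma is stated for all $h \in C_+(\Gamma)$, which is \emph{a priori} stronger, but an inspection of the proof of Lemma \ref{lem:lsc-of-dual-pairing}(1) shows that the lower semi-continuous function $A\rho$ is approximated from below by bounded continuous functions $h_k \nearrow A\rho$, and convergence of $\int h\, d\eta_n$ is only ever used for these bounded $h_k$. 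Equivalently, the Portmanteau theorem applied to the LSC test function $A\rho \in LSC_+(\Gamma)$ (which is in $LSC_+(\Gamma)$ by Lemma \ref{lem:regularity-of-various-maps-on-Curv}(2)) gives
\[
    \int_\Gamma A\rho\, d\eta \le \liminf_{n\to\infty} \int_\Gamma A\rho\, d\eta_n
\]
directly from weak convergence, which is all that is needed downstream.

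The second step is then a direct application of Lemma \ref{lem:B-weakly-closed-and-double-limit-in-dual-pairing}(1) to the sequence $(\eta_n)$ and its weak limit $\eta$. Since each $\eta_n \in \B(\Gamma)$ satisfies $\|d(A^\intercal \eta_n)/d\mu\|_{L^q(X)} \le 1$ by definition, the lemma yields that $A^\intercal \eta \ll \mu$ and $\|d(A^\intercal \eta)/d\mu\|_{L^q(X)} \le 1$. This is precisely the condition $\eta \in \B(\Gamma)$, completing the proof.

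The only subtle point is the hypothesis-matching in the first step, and as indicated this is essentially automatic — either by noting that the proof of Lemma \ref{lem:lsc-of-dual-pairing} only exploits bounded continuous test functions, or by replacing that lemma by one line of Portmanteau. No further obstacle is expected.
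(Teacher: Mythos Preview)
Your proposal is correct and follows exactly the same route as the paper: the paper's proof consists of the single sentence ``The proof follows immediately from Lemma \ref{lem:B-weakly-closed-and-double-limit-in-dual-pairing}, item (1).'' Your additional care in verifying that weak convergence (against bounded continuous test functions) suffices for the hypothesis of Lemma \ref{lem:B-weakly-closed-and-double-limit-in-dual-pairing} is a point the paper glosses over, and your observation that the proof of Lemma \ref{lem:lsc-of-dual-pairing} only ever uses bounded $h_k$ is the right justification.
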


\begin{proof}
The proof follows immediately from Lemma \ref{lem:B-weakly-closed-and-double-limit-in-dual-pairing}, item (1).
\end{proof}

\begin{remark} \label{rem:small-lemma}
It is useful to record the following observation: If $\eta\in\B(\Gamma)$ and $E\subset\Gamma$ is Borel, then $\eta\resmes E\in\B(E)$. See Remark \ref{rem:restriction-and-AT}.
\end{remark}

The following auxiliary lemma says that indeed, the aforementioned "suitable subset of measures" is precisely $\B(\Gamma)$.

\begin{lemma} \label{lem:dual-penalty}
Let $X$ be compact and $\Gamma\subset\Curv(X)$ Borel and $b\in\Bor(\Gamma)$ bounded. Then for any $\eta\in\M_+(\Gamma)$
\begin{equation} \label{eq:dual-penalty}
    \inf_{\rho\in C_+(X)}\big(\|\rho\|_{L^p(X)}-\la \rho,A^\intercal\eta\ra\big)
    =\inf_{\rho\in \LB^p_+(X)}\big(\|\rho\|_{L^p(X)}-\la \rho,A^\intercal\eta\ra\big)
    =
    \begin{cases}
        0&\quad\text{if }\eta\in\B(\Gamma); \\
        -\infty &\quad\text{otherwise}.
    \end{cases}
\end{equation}
\end{lemma}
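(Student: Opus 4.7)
The plan rests on two observations. First, the functional $F(\rho) := \|\rho\|_{L^p(X)} - \la \rho, A^\intercal\eta\ra$ is positively $1$-homogeneous on both $C_+(X)$ and $\LB^p_+(X)$, so any infimum lands in $\{0, -\infty\}$: it equals $0$ (attained at $\rho\equiv 0$) exactly when $F \geq 0$ throughout the test class, and $-\infty$ otherwise by scaling. Second, since $X$ is compact we have $C_+(X) \subset \LB^p_+(X)$, so it suffices to prove the equivalence ``$F \geq 0$ on $C_+(X)$ $\iff$ $\eta \in \B(\Gamma)$,'' together with the one-sided implication ``$\eta \in \B(\Gamma) \Rightarrow F \geq 0$ on $\LB^p_+(X)$''; the two infima in the statement will then coincide in both the $0$ and $-\infty$ cases.

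The easy direction is a one-line H\"older estimate on $\LB^p_+(X)$: if $A^\intercal\eta = f\mu$ with $\|f\|_{L^q(X)} \leq 1$, then
$$\la \rho, A^\intercal\eta\ra = \int_X \rho f\,d\mu \leq \|\rho\|_{L^p(X)}\|f\|_{L^q(X)} \leq \|\rho\|_{L^p(X)},$$
giving $F \geq 0$ on both test classes, and hence $\inf F = 0$.

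The substantial step is the converse, which I plan to handle by a soft Riesz representation argument. Assuming $F(\rho) \geq 0$ for every $\rho \in C_+(X)$, I first test against $\rho \equiv 1$ (which lies in $C_+(X)$ by compactness); this yields $A^\intercal\eta(X) \leq \mu(X)^{1/p} < \infty$, so $A^\intercal\eta$ is a finite, hence Radon, Borel measure on the compact space $X$. The positive linear functional $T(\rho) := \int_X \rho\,d(A^\intercal\eta)$ on $C(X)$ then satisfies $|T(\rho)| \leq T(|\rho|) \leq \|\rho\|_{L^p(X)}$, so it is bounded in $L^p$-norm. Using density of $C(X)$ in $L^p(X,\mu)$ (compact $X$, finite $\mu$), $T$ extends to a bounded positive linear functional on $L^p(X,\mu)$, which Riesz representation identifies with integration against some $g \in L^q_+(X)$ of norm at most $1$. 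Since $g\mu$ and $A^\intercal\eta$ are two finite Radon measures on a compact metric space agreeing when integrated against every continuous function, Riesz uniqueness forces $A^\intercal\eta = g\mu$, i.e.\ $\eta \in \B(\Gamma)$.

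The only delicate point I anticipate is that $A^\intercal\eta$ is a priori just a Borel measure and could carry infinite mass, which would derail the Riesz step; this obstruction is neatly removed at the outset by the $\rho\equiv 1$ test in the hypothesis. Contraposition then closes the loop: if $\eta \notin \B(\Gamma)$, there is some $\rho_0 \in C_+(X) \subset \LB^p_+(X)$ with $F(\rho_0) < 0$, and homogeneity drives both infima to $-\infty$.
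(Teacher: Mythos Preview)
Your proof is correct and takes a genuinely different route from the paper's. The paper first establishes the $\LB^p_+$ case by an explicit four-way case split on the nature of $A^\intercal\eta$ (not absolutely continuous with respect to $\mu$; absolutely continuous but density not in $L^q$; density in $L^q$ with norm $>1$; density in $L^q$ with norm $\leq 1$), exhibiting in each bad case a concrete sequence driving the infimum to $-\infty$. It then bridges to the $C_+$ infimum via a Vitali--Carath\'eodory approximation: given $\rho\in\LB^p_+$, it finds a lower semicontinuous $\tilde\rho\geq\rho$ close in $L^p$, approximates $\tilde\rho$ from below by continuous $\tilde\rho_n$, and uses monotone convergence to match the two infima. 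Your argument replaces all of this with the homogeneity observation (forcing the value into $\{0,-\infty\}$) together with a soft duality step: the test $\rho\equiv 1$ makes $A^\intercal\eta$ finite, and then the bound $|T(\rho)|\leq\|\rho\|_{L^p}$ on $C(X)$, density of $C(X)$ in $L^p(X,\mu)$, and Riesz representation for $(L^p)^*$ directly produce the $L^q$ density. The paper's approach is more constructive---you can read off exactly which $\rho$ witnesses the $-\infty$ in each failure mode---while yours is shorter and avoids Vitali--Carath\'eodory entirely. One small point worth making explicit in a write-up: the extension of $T$ from $C(X)$ to $L^p(X,\mu)$ is well-defined precisely because the bound $|T(\rho)|\leq\|\rho\|_{L^p}$ already forces $T$ to vanish on continuous functions that are $\mu$-a.e.\ zero, so no circularity with the absolute continuity you are proving.
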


\begin{proof}
We first prove the second equality in \eqref{eq:dual-penalty}. Let $\eta\in\M_+(\Gamma)$. We consider several cases.

First suppose that $A^\intercal\eta$ is not absolutely continuous with respect to $\mu$. In this case we can find a Borel set $E\subset X$ such that $\mu(E)=0$ but $(A^\intercal\eta)(E)>0$. Then
\begin{align*}
    \inf_{\rho\in\LB^p_+(X)}\big(\|\rho\|_{L^p(X)}-\la \rho,A^\intercal\eta\ra\big)
    \leq \inf_{n\in\N}\big(\|n\mathbbm{1}_E\|_{L^p(X)}-\la n\mathbbm{1}_E,A^\intercal\eta\ra\big)
    =-\infty.
\end{align*}

Next we suppose that $A^\intercal\eta$ is absolutely continuous with respect to $\mu$ and consequently
\begin{align*}
    \inf_{\rho\in\LB^p_+(X)}\big(\|\rho\|_{L^p(X)}-\la \rho,A^\intercal\eta\ra\big)
    =\inf_{\rho\in\LB^p_+(X)}\Big(\|\rho\|_{L^p(X)}-\int_X\rho\frac{d(A^\intercal\eta)}{d\mu}d\mu\Big)
\end{align*}
where $\frac{d(A^\intercal\eta)}{d\mu}$ denotes the Radon-Nikodym derivative of $A^\intercal\eta$ with respect to $\mu$.
We consider two cases, depending on whether or not $\frac{d(A^\intercal\eta)}{d\mu}$ lies in $L^q(X)$.

If $\frac{d(A^\intercal\eta)}{d\mu}$ does not lie in $L^q(X)$, then the functional $\rho\mapsto\int_X\rho\frac{d(A^\intercal\eta)}{d\mu}d\mu$ must be unbounded on $L^p(X)$. Thus, we can find a sequence $(\rho_n)$ of nonnegative functions in $L^p(X)$, that we may assume to be in $\LB^p(X)$, such that $\|\rho_n\|_{L^p(X)}=1$ and $\int_X\rho_n\frac{d(A^\intercal\eta)}{d\mu}d\mu\xrightarrow{n\to\infty}\infty$. 
Then
\begin{align*}
    \inf_{\rho\in\LB^p_+(X)}\big(\|\rho\|_{L^p(X)}-\la \rho,A^\intercal\eta\ra\big)
    \leq \inf_{n\in\N}\Big(\|\rho_n\|_{L^p(X)}-\int_X\rho_n\frac{d(A^\intercal\eta)}{d\mu}d\mu\Big)
    =-\infty.
\end{align*}
If $\frac{d(A^\intercal\eta)}{d\mu}$ lies in $L^q(X)$, and $\|\frac{d(A^\intercal\eta)}{d\mu}\|_{L^q(X)}>1$, then $\tau=(\frac{d(A^\intercal\eta)}{d\mu})^{q-1}$ lies in $L^p(X)$. We may choose a nonnegative Borel representative of $\tau$ so that $\tau\in\LB^p_+(X)$. Then
\begin{align*}
    \inf_{\rho\in\LB^p_+(X)}\big(\|\rho\|_{L^p(X)}-\la \rho,A^\intercal\eta\ra\big)
    &\leq \inf_{n\in\N}\Big(\|n\tau\|_{L^p(X)}-\int_Xn\tau\frac{d(A^\intercal\eta)}{d\mu}d\mu\Big) \\
    &=\inf_{n\in\N}\Big(n\Big(\Big\|\frac{d(A^\intercal\eta)}{d\mu}\Big\|_{L^q(X)}^{q/p}
    -\Big\|\frac{d(A^\intercal\eta)}{d\mu}\Big\|_{L^q(X)}^q\Big)\Big)
    =-\infty.
\end{align*}
Finally, if $\frac{d(A^\intercal\eta)}{d\mu}$ lies in $L^q(X)$, and $\|\frac{d(A^\intercal\eta)}{d\mu}\|_{L^q(X)}\leq 1$, then 
\begin{align*}
    0
    &\geq\inf_{\rho\in\LB^p_+(X)}\big(\|\rho\|_{L^p(X)}-\la \rho,A^\intercal\eta\ra\big)
    =\inf_{\rho\in\LB^p_+(X)}\Big(\|\rho\|_{L^p(X)}-\int_X\rho\frac{d(A^\intercal\eta)}{d\mu}d\mu\Big) \\
    &\geq\inf_{\rho\in\LB^p_+(X)}\Big(\|\rho\|_{L^p(X)}\Big(1-\Big\|\frac{d(A^\intercal\eta)}{d\mu}\Big\|_{L^q(X)}\Big)\Big)
    =0.
\end{align*}
Above we employed Hölder's inequality.

Having studied all the possible cases for $\eta\in\M_+(\Gamma)$ we conclude that the second equality in \eqref{eq:dual-penalty} holds. It remains to check the first equality in \eqref{eq:dual-penalty}, that is,
$$ \inf_{\rho\in C_+(X)}\big(\|\rho\|_{L^p(X)}-\la \rho,A^\intercal\eta\ra\big)=\inf_{\rho\in \LB^p_+(X)}\big(\|\rho\|_{L^p(X)}-\la \rho,A^\intercal\eta\ra\big). $$
It suffices to check that
\begin{align} \label{eq:a-goal-dual}
    \inf_{\rho\in C_+(X)}\big(\|\rho\|_{L^p(X)}-\la \rho,A^\intercal\eta\ra\big)\leq\inf_{\rho\in \LB^p_+(X)}\big(\|\rho\|_{L^p(X)}-\la \rho,A^\intercal\eta\ra\big)
\end{align}
since the opposite inequality is trivial.

Let $\rho\in\LB^p_+(X)$ and $\epsilon>0$. By Vitali-Carath\'{e}odory theorem we can find a lower semi-continuous function $\Tilde{\rho}$ such that
$$ \rho\leq\Tilde{\rho}
\quad\text{and}\quad \|\Tilde{\rho}\|_{L^p(X)}\leq \|\rho\|_{L^p(X)}+\epsilon. $$
By the lower semi-continuity of $\Tilde{\rho}$ we can find a sequence of nonnegative continuous functions $(\Tilde{\rho}_n)$ such that $\Tilde{\rho}_n\nearrow\Tilde{\rho}$.
Monotone convergence theorem yields that
$$ \lim_{n\to\infty}(\|\Tilde{\rho}_n\|_{L^p(X)}-\la\Tilde{\rho}_n,A^\intercal\eta\ra)=\|\Tilde{\rho}\|_{L^p(X)}-\la\Tilde{\rho},A^\intercal\eta\ra. $$
Then
\begin{align*}
    \inf_{\tau\in C_+(X)}\big(\|\tau\|_{L^p(X)}-\la \tau,A^\intercal\eta\ra\big)
    &\leq \lim_{n\to\infty}(\|\Tilde{\rho}_n\|_{L^p(X)}-\la\Tilde{\rho}_n,A^\intercal\eta\ra) \\
    &=\|\Tilde{\rho}\|_{L^p(X)}-\la\Tilde{\rho},A^\intercal\eta\ra \\
    &\leq \|\rho\|_{L^p(X)}+\epsilon-\la \rho,A^\intercal\eta\ra.
\end{align*}
Take an infimum over $\rho\in\LB^p_+(X)$ and let $\epsilon\to0$ in order to arrive at the desired inequality \eqref{eq:a-goal-dual}. The proof is finished.
\end{proof}

\begin{corollary} \label{cor:dual-both}
Let $X$ be compact and $\Gamma\subset\Curv(X)$ and $b\in\Bor(\Gamma)$ bounded. Suppose that $0<M\leq \infty$.
Then
$$ \sup_{\eta\in\B(\Gamma)}\int_\Gamma b d\eta=\sup_{\eta\in\M_+(\Gamma)}\inf_{\rho\in\LB^p_+(X)}\Lag(\rho,\eta) $$
and
$$ \sup_{\stackrel{\eta\in\B(\Gamma)}{\eta(\Gamma)\leq M}}\int_\Gamma b d\eta
=\sup_{\stackrel{\eta\in\M_+(\Gamma)}{\eta(\Gamma)\leq M}}\inf_{\rho\in C_+(X)}\Lag(\rho,\eta). $$
\end{corollary}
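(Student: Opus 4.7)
The plan is essentially a direct application of Lemma \ref{lem:dual-penalty}; the statement amounts to recognizing that the inner infimum in the Lagrangian formulation acts as an indicator function for the set $\B(\Gamma)$. First I would rewrite the Lagrangian by isolating the $\rho$-dependent terms:
$$\Lag(\rho,\eta) = \langle b,\eta\rangle + \bigl(\|\rho\|_{L^p(X)} - \langle \rho, A^\intercal\eta\rangle\bigr).$$
Since $\langle b,\eta\rangle$ is independent of $\rho$, I can pull it out of either infimum to obtain
$$\inf_{\rho}\Lag(\rho,\eta) = \langle b,\eta\rangle + \inf_{\rho}\bigl(\|\rho\|_{L^p(X)} - \langle \rho, A^\intercal\eta\rangle\bigr),$$
where the inner infimum is taken over $\LB^p_+(X)$ for the first identity and over $C_+(X)$ for the second.

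By Lemma \ref{lem:dual-penalty}, both inner infima coincide and equal $0$ if $\eta \in \B(\Gamma)$ and $-\infty$ otherwise. Consequently
$$\inf_{\rho}\Lag(\rho,\eta) = \begin{cases}\langle b,\eta\rangle & \text{if } \eta \in \B(\Gamma), \\ -\infty & \text{otherwise}.\end{cases}$$
Note that $\langle b,\eta\rangle$ is a finite real number on $\B(\Gamma)$ because $b$ is bounded and $\eta$ is a finite Radon measure, so this dichotomy is well-defined.

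To pass from the supremum over $\M_+(\Gamma)$ to the supremum over $\B(\Gamma)$, I would observe that the zero measure belongs to $\B(\Gamma)$ (trivially $A^\intercal 0 = 0$) and satisfies $\eta(\Gamma)\leq M$, contributing the value $0$. Hence the supremum is always at least $0$, and the values $-\infty$ at measures outside $\B(\Gamma)$ make no contribution. This yields both identities simultaneously, with the side constraint $\eta(\Gamma)\leq M$ carrying through verbatim on both sides of the second identity. There is no real obstacle here—the content of the corollary is entirely packaged into Lemma \ref{lem:dual-penalty} and the formal observation that $-\infty$ is never a candidate maximizer once the zero measure is available.
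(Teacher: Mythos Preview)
Your proof is correct and follows the same approach as the paper: both use Lemma~\ref{lem:dual-penalty} to evaluate the inner infimum as $0$ or $-\infty$, then pass to the supremum over $\B(\Gamma)$. Your version is slightly more explicit in noting that the zero measure ensures the supremum is at least $0$, so the $-\infty$ values are irrelevant---the paper leaves this implicit.
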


\begin{proof}
For the first equality, by Lemma \ref{lem:dual-penalty}
\begin{align*}
    \sup_{\eta\in\B(\Gamma)}\int_\Gamma b d\eta
    &=
    \sup_{\eta\in\M_+(\Gamma)}\Big(\la b,\eta\ra+\inf_{\rho\in\LB^p_+(X)}(\|\rho\|_{L^p(X)}-\la\rho,A^\intercal\eta\ra)\Big) \\
    &=
    \sup_{\eta\in\M_+(\Gamma)}\inf_{\rho\in\LB^p_+(X)}\Lag(\rho,\eta).
\end{align*}
The proof of the second equality is analogous.
\end{proof}

Corollary \ref{cor:dual-both} allows us to conclude that the dual problem of the generalized modulus problem is the maximization problem of
$$ \eta\mapsto \int_\Gamma bd\eta $$
over measures $\eta\in\B(\Gamma)$.
The following lemma says that if $\Gamma$ is compact and $b$ is continuous, we can guarantee the existence of a maximizer in the admissible class $\B(\Gamma)\cap\{\eta\in\M_+(\Gamma):\eta(\Gamma)\leq M\}$.

\begin{lemma} \label{lem:dual-maximizer}
Let $X$ and $\Gamma\subset \Curv(X)$ be compact and $b\in C(X)$. Suppose that $0<M<\infty$.
Then there exists $\eta^*\in\B(\Gamma)$ such that $\eta^*(\Gamma)\leq M$ and
$$ \int_\Gamma bd\eta^*=\sup_{\stackrel{\eta\in\B(\Gamma)}{\eta(\Gamma)\leq M}}\int_\Gamma b d\eta. $$
\end{lemma}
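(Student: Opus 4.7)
The plan is to apply the direct method of the calculus of variations: take a maximizing sequence, extract a weakly convergent subsequence via Prokhorov, and verify that the weak limit lies in the admissible class and attains the supremum. Although the statement writes $b\in C(X)$, throughout this section $b$ is a function on $\Gamma$, so I read this as $b\in C(\Gamma)$; since $\Gamma$ is compact, $b$ is automatically bounded.

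First I would introduce the admissible class
\[
\mathcal{K} := \{\eta\in\B(\Gamma) : \eta(\Gamma)\leq M\},
\]
which is nonempty (it contains the zero measure), and fix a maximizing sequence $(\eta_n)\subset\mathcal{K}$, i.e.
\[
\lim_{n\to\infty}\int_\Gamma b\,d\eta_n = \sup_{\eta\in\mathcal{K}}\int_\Gamma b\,d\eta.
\]
The family $\mathcal{K}$ is uniformly bounded in total mass by $M$. Compactness of $\Gamma$ makes it equi-tight trivially (take $K_\epsilon=\Gamma$ for every $\epsilon>0$). Hence Prokhorov's theorem (Theorem \ref{thm:prokhorov}) applies, and after passing to a subsequence we may assume $\eta_n\rightharpoonup\eta^*$ weakly in $\M_+(\Gamma)$ for some $\eta^*\in\M_+(\Gamma)$.

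Next I would verify $\eta^*\in\mathcal{K}$. The mass bound $\eta^*(\Gamma)\leq M$ follows by testing weak convergence against the continuous bounded function $\mathbbm{1}_\Gamma\in C(\Gamma)$, which gives $\eta^*(\Gamma)=\lim_n\eta_n(\Gamma)\leq M$. The bound on the barycenter, $\eta^*\in\B(\Gamma)$, is exactly the content of Lemma \ref{lem:B-closed-wrt-weak-convergence-to-Radon-measure}. Finally, since $b\in C(\Gamma)$ and $b$ is bounded on the compact set $\Gamma$, the functional $\eta\mapsto\int_\Gamma b\,d\eta$ is continuous in the weak topology, so
\[
\int_\Gamma b\,d\eta^* = \lim_{n\to\infty}\int_\Gamma b\,d\eta_n = \sup_{\eta\in\mathcal{K}}\int_\Gamma b\,d\eta,
\]
which identifies $\eta^*$ as the desired maximizer.

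The only potential sticking point is the weak closedness of the constraint $\B(\Gamma)$, because the map $\eta\mapsto\|d(A^\intercal\eta)/d\mu\|_{L^q(X)}$ is not obviously continuous; this is precisely what Lemma \ref{lem:B-closed-wrt-weak-convergence-to-Radon-measure} handles through the lower semi-continuity of $\eta\mapsto\langle\rho,A^\intercal\eta\rangle$ for $\rho\in C_+(X)$, so no further work is required.
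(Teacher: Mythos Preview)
Your proof is correct and follows essentially the same route as the paper's: take a maximizing sequence, apply Prokhorov's theorem (using compactness of $\Gamma$), and then use weak convergence together with Lemma \ref{lem:B-closed-wrt-weak-convergence-to-Radon-measure} to verify that the limit lies in $\B(\Gamma)$, satisfies the mass bound, and attains the supremum since $b$ is continuous and bounded. Your observation that the hypothesis should read $b\in C(\Gamma)$ rather than $b\in C(X)$ is also correct.
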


\begin{proof}
Let us take a maximizing sequence $(\eta_n)$. That is, $\eta_n\in\B$, $\eta_n(\Gamma)\leq M$ and
$$ \lim_{n\to\infty}\int_\Gamma bd\eta_n=\sup_{\stackrel{\eta\in\B}{\eta(\Gamma)\leq M}}\int_\Gamma b d\eta. $$
By Prokhorov's theorem, Theorem \ref{thm:prokhorov}, we may assume that $(\eta_n)$ converges weakly to some $\eta^*\in\M_+(\Gamma)$. Thus $\eta_n(\Gamma)\xrightarrow{n\to\infty}\eta^*(\Gamma)$ and consequently $\eta^*(\Gamma)\leq M$. Moreover, since $b\in C(\Gamma)$,
$$ \lim_{n\to\infty}\int_\Gamma bd\eta_n=\int_\Gamma bd\eta^*. $$
Thus $\eta^*$ reaches the desired supremum. 
Lemma \ref{lem:B-closed-wrt-weak-convergence-to-Radon-measure} implies that $\eta^*\in\B(\Gamma)$.
\end{proof}
%------------------------------
\subsection{Weak duality for \texorpdfstring{$\Mod_p$}{Modp}}

We need the following two auxiliary results in the proof of the main theorem in Section \ref{sec:proof-of-main}. This fact is often called \emph{weak duality}.

\begin{lemma} \label{lem:weak-duality}
Let $X$ be compact and $\Gamma\subset\Curv(X)$ be Borel and $b\in\Bor(\Gamma)$ be bounded. Then
$$ \sup_{\eta\in\B(\Gamma)}\int_\Gamma bd\eta\leq
\inf_{\rho\in\A(\Gamma,b)}\|\rho\|_{L^p(X)}. $$
In particular, if $\eta\in\B(\Gamma)$ then $\int_\Gamma bd\eta\leq(\Mod_p(\Gamma,b))^{1/p}$.
\end{lemma}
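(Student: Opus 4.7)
The plan is to establish this by a direct chain of inequalities, combining admissibility, the transpose identity, and H\"older's inequality; there is no real obstacle here, as this is the standard ``weak duality'' step in convex optimization.

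Fix any admissible pair $\rho \in \A(\Gamma, b)$ and $\eta \in \B(\Gamma)$. Since $\rho$ is admissible, $A\rho \geq b$ pointwise on $\Gamma$, so integrating against the nonnegative measure $\eta$ gives
\[
\int_\Gamma b \, d\eta \;\leq\; \int_\Gamma A\rho \, d\eta \;=\; \langle A\rho, \eta\rangle.
\]
Next, I would apply the transpose identity \eqref{eq:transpose-formula-new} to rewrite $\langle A\rho, \eta\rangle = \langle \rho, A^\intercal \eta\rangle$. By the definition of $\B(\Gamma)$, the measure $A^\intercal \eta$ is absolutely continuous with respect to $\mu$ with Radon--Nikodym density $\frac{d(A^\intercal \eta)}{d\mu} \in L^q(X)$ of norm at most $1$. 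Hence H\"older's inequality yields
\[
\langle \rho, A^\intercal \eta\rangle \;=\; \int_X \rho \, \frac{d(A^\intercal \eta)}{d\mu} \, d\mu \;\leq\; \|\rho\|_{L^p(X)} \Big\| \tfrac{d(A^\intercal \eta)}{d\mu}\Big\|_{L^q(X)} \;\leq\; \|\rho\|_{L^p(X)}.
\]

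Chaining these two inequalities gives $\int_\Gamma b \, d\eta \leq \|\rho\|_{L^p(X)}$ for every $\rho \in \A(\Gamma, b)$ and $\eta \in \B(\Gamma)$. Taking the infimum over $\rho$ and the supremum over $\eta$ proves the first assertion. The final statement then follows immediately by noting that $(\Mod_p(\Gamma, b))^{1/p} = \inf_{\rho \in \A(\Gamma,b)} \|\rho\|_{L^p(X)}$ by definition, so any single $\eta \in \B(\Gamma)$ satisfies $\int_\Gamma b \, d\eta \leq (\Mod_p(\Gamma, b))^{1/p}$.

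A small caveat worth mentioning: one should verify $\rho \in L^p(X)$ before invoking H\"older, but if $\rho \notin L^p(X)$ the inequality $\int_\Gamma b \, d\eta \leq \|\rho\|_{L^p(X)} = \infty$ is trivial, so we lose nothing by restricting to $\rho \in \A(\Gamma,b) \cap \mathfrak{L}^p_+(X)$ in the argument.
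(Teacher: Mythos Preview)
Your proof is correct. It is the direct ``weak duality'' argument: pair an admissible $\rho$ with an admissible $\eta$, use $A\rho\geq b$, the transpose identity, and H\"older. All steps are sound, and your caveat about $\rho\notin L^p$ is handled properly.

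The paper takes a different route. Rather than arguing directly, it invokes the Lagrangian reformulations already established: Corollary~\ref{cor:primal-both} rewrites the right-hand side as $\inf_\rho\sup_\eta \Lag(\rho,\eta)$, Corollary~\ref{cor:dual-both} rewrites the left-hand side as $\sup_\eta\inf_\rho \Lag(\rho,\eta)$, and then the general max-min inequality $\sup\inf\leq\inf\sup$ finishes it. Your argument is more self-contained and avoids the Lagrangian machinery entirely; in effect you are reproving in one line the content of Lemma~\ref{lem:dual-penalty} (the H\"older step) specialized to this situation. The paper's approach has the advantage of making the weak-duality step fit visibly into the min-max framework that is reused for strong duality in Proposition~\ref{prop:strong-duality}, but for this lemma alone your direct argument is cleaner.
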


\begin{proof}
We may assume that $\inf_{\rho\in\A(\Gamma,b)}\|\rho\|_{L^p(X)}<\infty$, because otherwise the claim follows trivially.  

By Corollary \ref{cor:primal-both}
\begin{align*}
    \inf_{\rho\in\A(\Gamma,b)}\|\rho\|_{L^p(X)} 
    &=
    \inf_{\rho\in \LB^p_+(X)}\sup_{\eta\in\M_+(\Gamma)}\Lag(\rho,\eta),
\end{align*}
and by Corollary \ref{cor:dual-both}
\begin{align}
    \sup_{\eta\in\B}\int_\Gamma b d\eta
    &=
    \sup_{\eta\in\M_+(\Gamma)}\inf_{\rho\in \LB^p_+(X)}\Lag(\rho,\eta). \label{eq:maxminineq}
\end{align}
By max-min inequality
$$ \sup_{\eta\in\M_+(\Gamma)}\inf_{\rho\in \LB^p_+(X)}\Lag(\rho,\eta)
\leq\inf_{\rho\in \LB^p_+(X)}\sup_{\eta\in\M_+(\Gamma)}\Lag(\rho,\eta), $$
from which the claim follows.
\end{proof}

From the previous lemma with $b\equiv 1$ we obtain the following corollary.

\begin{corollary} \label{cor:p-exceptional-is-eta-nul}
Let $X$ be compact and $\Gamma\subset\Curv(X)$ be a $p$-exceptional Borel set. 
Then for any $\eta\in\B(\Gamma)$ it holds that $\eta(\Gamma)=0$.
\end{corollary}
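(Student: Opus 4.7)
The plan is to derive this immediately from the weak duality estimate in Lemma~\ref{lem:weak-duality} applied to the constant bounding function $b \equiv 1$. The key observation is that with this choice, the quantity $\int_\Gamma b\,d\eta$ becomes exactly the total mass $\eta(\Gamma)$, and the generalized modulus $\Mod_p(\Gamma, 1)$ reduces to the classical $p$-modulus $\Mod_p(\Gamma)$, which vanishes by assumption.

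More precisely, fix $\eta \in \B(\Gamma)$. Since $\Gamma$ is $p$-exceptional, $\Mod_p(\Gamma) = 0$, and from the defining formula for the generalized modulus together with the remark after its definition, this equals $\Mod_p(\Gamma, 1)$. The conclusion of Lemma~\ref{lem:weak-duality} (with $b \equiv 1$, which is a bounded Borel function on $\Gamma$) then gives
\[
    \eta(\Gamma) = \int_\Gamma 1 \, d\eta \leq \bigl(\Mod_p(\Gamma, 1)\bigr)^{1/p} = 0,
\]
so $\eta(\Gamma) = 0$, as desired.

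There is no real obstacle here — the entire content is repackaged weak duality. The only minor point worth double-checking is that $b \equiv 1$ fits the hypotheses of Lemma~\ref{lem:weak-duality} (it is trivially a bounded Borel function) and that the identification $\Mod_p(\Gamma, 1) = \Mod_p(\Gamma)$ is the one already recorded in the remark immediately following the definition of the generalized modulus. After this, the corollary is a one-line application.
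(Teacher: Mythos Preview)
Your proof is correct and follows exactly the paper's own approach: the paper simply states that the corollary follows from Lemma~\ref{lem:weak-duality} with $b\equiv 1$, which is precisely what you spell out.
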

%------------------------------
\subsection{Strong duality for \texorpdfstring{$\Mod_p^c$}{Modpc}} \label{ssec:strong-duality-new}
The inequality in Lemma \ref{lem:weak-duality} becomes an equality thanks to the minimax theorem, which replaces the inequality in \eqref{eq:maxminineq} with an equality. This is called strong duality. In this section, we apply the following version of the  minimax theorem \cite{Sion1958}*{Theorem 4.2}, which is due to Kneser \cite{Kneser} and Fan \cite{Fan} (but is sometimes called Sion's minimax theorem). These are all generalizations of the famous minimax theorem of Von Neumann \cite{VonNeumann}. 

\begin{theorem}[Kneser-Fan minimax theorem \cite{Sion1958}*{Theorem 4.2}] \label{thm:sion}
Let $A$ be a convex subset of a linear topological space and let $B$ be a compact and convex subset of a linear topological space. Suppose that $f\colon A\times B\to \R$ is a function that is
\begin{enumerate}
    \item convex in the first variable: $f(\cdot,b)\colon A\to \R$ is convex for all $b\in B$;
    \item upper semi-continuous and concave in the second variable: $f(a,\cdot)\colon B\to \R$ is upper semi-continuous and concave for all $a\in A$.
\end{enumerate}
Then
$$ \inf_{a\in A}\sup_{b\in B} f(a,b)=\sup_{b\in B}\inf_{a\in A}f(a,b). $$
\end{theorem}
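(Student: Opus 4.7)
The easy direction $\sup_{b \in B}\inf_{a \in A} f(a,b) \leq \inf_{a \in A}\sup_{b \in B} f(a,b)$ is the elementary max-min bound, so the real content of the theorem is the reverse inequality. I would argue it by contradiction: suppose there is a real number $c$ with $\sup_{b\in B}\inf_{a\in A} f(a,b) < c < \inf_{a\in A}\sup_{b\in B} f(a,b)$. For each $a \in A$, introduce the superlevel set $K_a := \{b \in B : f(a,b) \geq c\}$. Upper semi-continuity of $f(a,\cdot)$ together with compactness of $B$ makes $K_a$ compact; concavity of $f(a,\cdot)$ makes it convex; and the bound $\sup_B f(a,\cdot) \geq \inf_A \sup_B f > c$, combined with the fact that an upper semi-continuous function attains its supremum on the compact set $B$, guarantees that $K_a$ is nonempty. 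Any point $b^* \in \bigcap_{a \in A} K_a$ would satisfy $f(a,b^*) \geq c$ for every $a \in A$, hence $\inf_A f(\cdot,b^*) \geq c > \sup_B \inf_A f$, a contradiction. So the task reduces to showing this intersection is nonempty.

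By the compactness of $B$ and the finite intersection property, it suffices to prove that for any finite collection $a_1,\dots,a_n \in A$, the intersection $K_{a_1} \cap \cdots \cap K_{a_n}$ is nonempty. I would induct on $n$, the base case $n = 1$ being precisely what was just established. For the inductive step, a standard trick replaces $A$ by the segment joining two chosen elements $a_0, a_1 \in A$ and replaces $B$ by the compact convex set $K_{a_2} \cap \cdots \cap K_{a_n}$, which is nonempty by the inductive hypothesis and convex as an intersection of convex sets. The hypotheses of the theorem are preserved under this reduction, so the whole induction collapses to the two-point case $n=2$.

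The two-point case is the heart of the argument and the main obstacle. Here one must show that for any $a_0, a_1 \in A$, the sets $K_{a_0}$ and $K_{a_1}$ share a common point. I would exploit the one-parameter family $a_t := (1-t)a_0 + t a_1$ with its superlevel sets $K_{a_t}$, each of which is nonempty, compact, and convex, and study how $K_{a_t}$ varies in $t$ using the convexity of $f(\cdot,b)$ in $a$ together with the upper semi-continuity and concavity of $f(a_t,\cdot)$ in $b$. A connectedness argument on $[0,1]$ — formally, showing that the set of $t$ for which $K_{a_0} \cap K_{a_t}$ is nonempty is both open and closed in $[0,1]$ and contains $0$ — then forces $K_{a_0} \cap K_{a_1} \neq \emptyset$. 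An equivalent route, which I would fall back on if the connectedness bookkeeping becomes awkward, separates the disjoint compact convex sets $K_{a_0}$ and $K_{a_1}$ in $B$ by a hyperplane and derives a contradiction from the convexity of $f$ in $a$ evaluated on the midpoint $a_{1/2}$. This step is where concavity in $b$ and convexity in $a$ must truly interact; everything else in the proof is compactness bookkeeping.
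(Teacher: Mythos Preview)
The paper does not prove this theorem at all: it is quoted as a classical result, with a citation to Sion \cite{Sion1958}*{Theorem 4.2} and attributions to Kneser and Fan, and is then applied as a black box in the proof of Proposition~\ref{prop:strong-duality}. So there is no ``paper's own proof'' to compare against; your proposal is a standalone sketch of the classical argument rather than a reconstruction of anything in the text.

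As a sketch of the classical proof, your outline is broadly correct in structure --- the superlevel sets $K_a$, the finite intersection reduction, and the induction down to the two-point case are exactly the architecture of Sion's argument. The two-point step is, as you acknowledge, where the real work lies, and your description there is somewhat imprecise: the set $\{t\in[0,1]:K_{a_0}\cap K_{a_t}\neq\emptyset\}$ is not obviously open (closedness is the easy half), and the hyperplane-separation fallback you mention would require $B$ to sit in a locally convex space, which is not among the hypotheses. Sion's actual two-point argument proceeds by a more delicate nested-interval construction using convexity in $a$ to force the sets $K_{a_t}$ to be trapped on one side or the other. None of this matters for the paper, though, since the result is simply invoked.
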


\begin{proposition} \label{prop:strong-duality}
Let $X$ and $\Gamma\subset \Curv(X)$ be compact and $b\in C(X)$. 
Suppose that 
$0<\Mod_p^c(\Gamma,b)<\infty$ and $\inf_\Gamma b>0$.
Then
\begin{align} \label{eq:strong-duality}
    \inf_{\rho\in\A(\Gamma,b)\cap C(X)}\|\rho\|_{L^p(X)}
    =\inf_{\rho\in C_+(X)}\sup_{\stackrel{\eta\in\M_+(\Gamma)}{\eta(\Gamma)\leq M}}\Lag(\rho,\eta) 
    =\sup_{\stackrel{\eta\in\M_+(\Gamma)}{\eta(\Gamma)\leq M}}\inf_{\rho\in C_+(X)}\Lag(\rho,\eta)
    =\sup_{\stackrel{\eta\in\B}{\eta(\Gamma)\leq M}}\int_\Gamma b d\eta.
\end{align}
where
$$ M:=\frac{\big(\Mod_p^c(\Gamma,b)\big)^{1/p}}{\inf_{\Gamma}b}. $$
\end{proposition}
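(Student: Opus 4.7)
The plan is to establish the outer two equalities as immediate consequences of previously proved results and then concentrate the real work on the central minimax identity. The first equality is precisely the content of Lemma \ref{lem:primal}, and the third equality is Corollary \ref{cor:dual-both} (the restriction $\eta(\Gamma)\leq M$ can be carried through its proof unchanged, since Lemma \ref{lem:dual-penalty} is unaffected by an added cap on the total mass). The substance of the proposition is therefore the single equality
\[
\inf_{\rho\in C_+(X)}\sup_{\stackrel{\eta\in\M_+(\Gamma)}{\eta(\Gamma)\leq M}}\Lag(\rho,\eta)
= \sup_{\stackrel{\eta\in\M_+(\Gamma)}{\eta(\Gamma)\leq M}}\inf_{\rho\in C_+(X)}\Lag(\rho,\eta),
\]
for which I would appeal to the Kneser--Fan minimax theorem (Theorem \ref{thm:sion}).

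To apply Theorem \ref{thm:sion}, take $A:=C_+(X)$ as a convex subset of the Banach space $C(X)$ with sup-norm, and $B:=\{\eta\in\M_+(\Gamma):\eta(\Gamma)\leq M\}$ as a subset of $\M(\Gamma)$ equipped with the weak topology (which makes $\M(\Gamma)$ a locally convex linear topological space). Convexity of both $A$ and $B$ is immediate. The compactness of $B$ is the one place where the compactness of $\Gamma$ is used: $B$ is uniformly bounded by $M$, and equi-tight with the trivial choice $K_\epsilon = \Gamma$ for every $\epsilon>0$, so Prokhorov's theorem (Theorem \ref{thm:prokhorov}) gives precompactness, while the weak continuity of $\eta\mapsto\eta(\Gamma)=\int_\Gamma 1\,d\eta$ on a compact space gives closedness.

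It remains to verify the three joint properties of $\Lag$. Convexity of $\rho\mapsto\Lag(\rho,\eta)$ is clear since $\|\cdot\|_{L^p(X)}$ is a norm and $\rho\mapsto\langle A\rho,\eta\rangle=\int_\Gamma\int_\gamma\rho\,ds\,d\eta$ is linear. The map $\eta\mapsto\Lag(\rho,\eta)=\|\rho\|_{L^p(X)}-\langle A\rho,\eta\rangle+\langle b,\eta\rangle$ is affine, hence concave. The crucial hypothesis to check is upper semi-continuity of this latter map in $\eta$. Here the continuity assumption on $\rho$ plays its role: by Lemma \ref{lem:regularity-of-various-maps-on-Curv}(2), $A\rho$ is lower semi-continuous on $\Gamma$, and then Lemma \ref{lem:lsc-of-dual-pairing} yields that $\eta\mapsto\langle A\rho,\eta\rangle$ is weakly lower semi-continuous on $B$. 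Interpreting the hypothesis $b\in C(X)$ as $b\in C(\Gamma)$ (since $b$ is defined on $\Gamma$ and this is what is used throughout Section \ref{sec:dual}), the map $\eta\mapsto\langle b,\eta\rangle$ is weakly continuous. Combining, $\eta\mapsto\Lag(\rho,\eta)$ is upper semi-continuous on $B$, and Theorem \ref{thm:sion} yields the desired minimax equality.

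The main subtlety is that the argument genuinely requires $\rho\in C_+(X)$ rather than $\rho\in\LB^p_+(X)$: without the continuity of $\rho$, $A\rho$ is only Borel and the lower semi-continuity needed to feed Lemma \ref{lem:lsc-of-dual-pairing} would be lost, breaking the USC hypothesis of the minimax theorem. This is exactly why the reductions performed in the preceding lemmas are required, namely passing to continuous densities via Lemma \ref{lem:continuous-densities} (which relates $\Mod_p$ to $\Mod_p^c$ under $\inf_\Gamma b>0$ and compact $\Gamma$) and restricting the dual variable to the bounded-mass set $\{\eta(\Gamma)\leq M\}$ in Lemma \ref{lem:primal}. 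These structural adjustments are precisely what makes the minimax theorem applicable, and they explain why strong duality is proved here only for $\Mod_p^c$ and only under the extra positivity and compactness assumptions.
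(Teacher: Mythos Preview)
Your proof is correct and follows essentially the same route as the paper's: the outer equalities are reduced to Lemma~\ref{lem:primal} and Corollary~\ref{cor:dual-both}, and the middle one is obtained from the Kneser--Fan minimax theorem after verifying compactness of the mass-constrained measure set via Prokhorov and upper semi-continuity of $\eta\mapsto\Lag(\rho,\eta)$ via Lemma~\ref{lem:lsc-of-dual-pairing}. You also correctly note that the hypothesis should read $b\in C(\Gamma)$ rather than $b\in C(X)$, and your closing paragraph explaining why the reduction to continuous $\rho$ is essential is a helpful addition.
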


\begin{proof}
The first and third equalities in display \eqref{eq:strong-duality} follow from Lemma \ref{lem:primal} and Corollary \ref{cor:dual-both} respectively.
The second equality follows from the minimax theorem, Theorem \ref{thm:sion}, applied to the Lagrangian function
$$ \Lag\colon C_+(X)\times \{\eta\in\M_+(\Gamma):\eta(\Gamma)\leq M\}\to\R, $$
given by
$$ \Lag(\rho,\eta)=\|\rho\|_{L^p(X)}-\la A\rho-b,\eta\ra. $$
We check that the assumptions of the minimax theorem are satisfied in this setting. 

Note that the sets $C_+(X)$ and $\{\eta\in\M_+(\Gamma):\eta(\Gamma)\leq M\}$ are convex and the latter is compact with respect to the weak topology of Radon measures by Prokhorov's theorem \ref{thm:prokhorov}. 
It is clear that $\Lag$ is convex in the first variable and concave (in fact, affine) in the second variable. 
For the upper semi-continuity of $\Lag$ in the second variable, note that $\Lag(\rho,\cdot)$ is an affine function and since $\eta\to \la A\rho-b,\eta\ra \in  LSC(\Gamma)$ by Lemma \ref{lem:lsc-of-dual-pairing} and since $b$ is continuous. 
\end{proof}

The following proposition is the main conclusion of this section.

\begin{proposition} \label{prop:properties}
Let $X$ and $\Gamma\subset \Curv(X)$ be compact and $b\in C(\Gamma)$. Suppose 
$0<\Mod_p^c(\Gamma,b)<\infty$ and $\inf_\Gamma b>0$, and let
$$ M:=\frac{\big(\Mod_p^c(\Gamma,b)\big)^{1/p}}{\inf_{\Gamma}b}. $$
Then there exist $\rho^*\in\LB^p_+(X)$ and $\eta^*\in\M_+(\Gamma)$ such that the following hold:
\begin{enumerate}
    \item $\int_\gamma\rho^*ds\geq b(\gamma)$ for $p$-a.e. $\gamma\in\Gamma$.
    \item $\eta^*(\Gamma)\leq M$ and $\eta^*\in\B(\Gamma)$.
    \item $(\Mod_p^c(\Gamma,b))^{1/p}=\|\rho^*\|_{L^p(X)}=\int_\Gamma bd\eta^*$.
    \item We have that
    $$ \int_\Gamma \int_\gamma\rho^*ds-b(\gamma)d\eta^*=0. $$ 
    \item Measure $\eta^*$ is supported on those curves $\gamma\in\Gamma$ for which $\int_\gamma\rho^*ds=b(\gamma)$. In other words, $\int_\gamma\rho^*ds=b(\gamma)$ holds for $\eta^*$-a.e. curve $\gamma\in\Gamma$.
    \item For $\mu$-a.e.
    \[\frac{d(A^\intercal\eta^*)}{d\mu}=\left(\frac{\rho^*}{\|\rho^*\|_{L^p(X)}}\right)^{p-1}.\]
    \item For any $\rho\in\Bor_+(X)$ we have that
    $$ \int_\Gamma\int_\gamma\rho dsd\eta^*=\int_X\rho\left(\frac{\rho^*}{\|\rho^*\|_{L^p(X)}}\right)^{p-1}d\mu. $$
\end{enumerate}    
\end{proposition}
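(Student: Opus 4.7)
The plan is to produce $\rho^*$ and $\eta^*$ from the existence results already in hand, and then extract all seven properties from a single chain of (in)equalities that strong duality forces to collapse.

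\emph{Construction.} Since $\Gamma$ is compact, $b$ is continuous with $\inf_\Gamma b>0$, and $\Mod_p^c(\Gamma,b)<\infty$, Lemma \ref{lem:continuous-densities} yields $\Mod_p(\Gamma,b)=\Mod_p^c(\Gamma,b)<\infty$. Lemma \ref{lem:primal-minimizer} then produces a weak minimizer $\rho^*\in\mathfrak{L}^p_+(X)$ with $\|\rho^*\|_{L^p(X)}^p=\Mod_p^c(\Gamma,b)$, giving (1). Lemma \ref{lem:dual-maximizer} produces $\eta^*\in\B(\Gamma)$ with $\eta^*(\Gamma)\leq M$ attaining the supremum in the last expression of \eqref{eq:strong-duality}, giving (2). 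Strong duality (Proposition \ref{prop:strong-duality}) then forces $\|\rho^*\|_{L^p(X)}=\int_\Gamma b\,d\eta^*$, which is (3).

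\emph{The collapsing chain.} Set $f:=\frac{d(A^\intercal \eta^*)}{d\mu}\in L^q(X)$; this is well-defined with $\|f\|_{L^q(X)}\leq 1$ by definition of $\B(\Gamma)$. Since the set $\{\gamma\in\Gamma:\int_\gamma\rho^*ds<b(\gamma)\}$ is $p$-exceptional by weak admissibility of $\rho^*$, Corollary \ref{cor:p-exceptional-is-eta-nul} gives $\int_\gamma\rho^*ds\geq b(\gamma)$ for $\eta^*$-a.e.\ $\gamma$. Combining this with \eqref{eq:transpose-formula-new} and Hölder's inequality,
\begin{align*}
    \int_\Gamma b\,d\eta^* &\leq \int_\Gamma\!\!\int_\gamma \rho^*\,ds\,d\eta^* =\la\rho^*,A^\intercal\eta^*\ra=\int_X \rho^* f\,d\mu \\
    &\leq \|\rho^*\|_{L^p(X)}\|f\|_{L^q(X)} \leq \|\rho^*\|_{L^p(X)} = \int_\Gamma b\,d\eta^*.
\end{align*}
Hence all three inequalities are equalities: the first gives (4), and since the nonnegative integrand $\int_\gamma\rho^*ds-b(\gamma)$ then integrates to zero against $\eta^*$, it must vanish $\eta^*$-a.e., giving (5).

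\emph{From Hölder equality to (6) and (7).} Because $\|\rho^*\|_{L^p(X)}>0$, saturation of the last inequality forces $\|f\|_{L^q(X)}=1$; saturation of the middle (Hölder) inequality then gives $f=(\rho^*/\|\rho^*\|_{L^p(X)})^{p-1}$ $\mu$-a.e., which is (6). Property (7) is immediate from (6) together with \eqref{eq:transpose-formula-new}. The only subtle point in the argument is the passage from weak admissibility of $\rho^*$ to admissibility $\eta^*$-a.e., which rests on Corollary \ref{cor:p-exceptional-is-eta-nul} (itself a consequence of weak duality applied with $b\equiv 1$); once that is in hand, the remainder is a clean unpacking of the Hölder equality case.
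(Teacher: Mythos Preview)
Your proof is correct. Both you and the paper build $\rho^*$ and $\eta^*$ from Lemmas \ref{lem:primal-minimizer} and \ref{lem:dual-maximizer}, invoke Proposition \ref{prop:strong-duality} for (3), and use Corollary \ref{cor:p-exceptional-is-eta-nul} (via the restriction $\eta^*\resmes E\in\B(E)$) to pass from weak admissibility to $\eta^*$-a.e.\ admissibility; the extraction of (6) from the Hölder equality case is also common to both.

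Where the arguments diverge is in how (4) and the identity $\|\rho^*\|_{L^p}=\int_X\rho^*\frac{d(A^\intercal\eta^*)}{d\mu}\,d\mu$ are reached. The paper works through the Lagrangian: it takes a minimizing sequence $\rho_k\in\A(\Gamma,b)\cap C(X)$ (available because $\Mod_p=\Mod_p^c$ here), squeezes $\Lag(\rho_k,\eta^*)$ between $\int_\Gamma b\,d\eta^*$ and $\|\rho_k\|_{L^p}$, and passes to the limit to show $(\rho^*,\eta^*)$ is a saddle point. You instead write down directly the chain $\int_\Gamma b\,d\eta^*\leq\la\rho^*,A^\intercal\eta^*\ra\leq\|\rho^*\|_{L^p}\|f\|_{L^q}\leq\|\rho^*\|_{L^p}=\int_\Gamma b\,d\eta^*$ and let it collapse. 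Your route is shorter and avoids the approximation step; the paper's route makes the saddle-point structure explicit, which matches the optimization narrative of the section. One small point worth making explicit in your version is that the exceptional set $\{A\rho^*<b\}$ is Borel (by Lemma \ref{lem:regularity-of-various-maps-on-Curv}(3) and continuity of $b$), which is needed to apply Corollary \ref{cor:p-exceptional-is-eta-nul} through Remark \ref{rem:small-lemma}.
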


\begin{proof}
Let $\rho^*$ and $\eta^*$ be given by Lemma \ref{lem:primal-minimizer} and Lemma \ref{lem:dual-maximizer}, respectively. Then properties (1) and (2) are immediate. For property (3) we employ Proposition \ref{prop:strong-duality} to find that
\begin{align} \label{eq:strong-duality-in-proof}
    \|\rho^*\|_{L^p(X)}
    =\inf_{\rho\in C_+(X)}\sup_{\stackrel{\eta\in\M_+(\Gamma)}{\eta(\Gamma)\leq M}}\Lag(\rho,\eta) 
    =\sup_{\stackrel{\eta\in\M_+(\Gamma)}{\eta(\Gamma)\leq M}}\inf_{\rho\in C_+(X)}\Lag(\rho,\eta)
    =\int_\Gamma b d\eta^*.
\end{align}
For properties (4) and (5), let $(\rho_k)$ be a minimizing sequence in $\A(\Gamma,b)\cap C(X)$ for $\Mod_p^c(\Gamma,b)$. Then, by Lemma \ref{lem:continuous-densities} and Lemma \ref{lem:primal-minimizer} we have $\rho_k\xrightarrow{k\to\infty}\rho^*$ in $L^p(X)$.  Then
\begin{align*}
    \inf_{\rho\in C_+(X)}\Lag(\rho,\eta^*)\leq \Lag(\rho_k,\eta^*)
    \leq\sup_{\stackrel{\eta\in\M_+(\Gamma)}{\eta(\Gamma)\leq M}}\Lag(\rho_k,\eta). 
\end{align*}
Since $\eta^*\in\B(\Gamma)$, Lemma \ref{lem:dual-penalty} implies that
$$ \inf_{\rho\in C_+(X)}\Lag(\rho,\eta^*)=\int_\Gamma bd\eta^*. $$
Similarly, since $\rho_k\in\A(\Gamma,b)$ Lemma \ref{lem:primal-penalty} implies that
$$ \sup_{\stackrel{\eta\in\M_+(\Gamma)}{\eta(\Gamma)\leq M}}\Lag(\rho_k,\eta)
\leq\sup_{\eta\in\M_+(\Gamma)}\Lag(\rho_k,\eta)=\|\rho_k\|_{L^p(X)}. $$
We conclude that
\begin{align*}
    \int_\Gamma bd\eta^*\leq \Lag(\rho_k,\eta^*)\leq \|\rho_k\|_{L^p(X)}.
\end{align*}
Letting $k\to\infty$ while employing \eqref{eq:strong-duality-in-proof} yields
\begin{align*}
    \int_\Gamma bd\eta^*=\lim_{k\to\infty}\Lag(\rho_k,\eta^*)=\|\rho^*\|_{L^p(X)}.
\end{align*}
Moreover,
\begin{align*}
    \Lag(\rho_k,\eta^*)
    &=\|\rho_k\|_{L^p(X)}-\la \rho_k,A^\intercal\eta^*\ra+\la b,\eta^*\ra \\
    &=\|\rho_k\|_{L^p(X)}-\int_X\rho_k\frac{d(A^\intercal\eta^*)}{d\mu}d\mu+\int_\Gamma bd\eta^* \\
    &\quad\xrightarrow{k\to\infty} \|\rho^*\|_{L^p(X)}-\int_X\rho^*\frac{d(A^\intercal\eta^*)}{d\mu}d\mu+\int_\Gamma bd\eta^*
    =\Lag(\rho^*,\eta^*).
\end{align*}
Now we have obtained that
\begin{align*}
    \int_\Gamma bd\eta^*=\Lag(\rho^*,\eta^*)=\|\rho^*\|_{L^p(X)}.
\end{align*}
In particular,
$$ \|\rho^*\|=\int_X\rho^*\frac{d(A^\intercal\eta^*)}{d\mu}d\mu
\quad\text{and}\quad \int_\Gamma \int_\gamma\rho^*ds-b(\gamma)d\eta^*=0.  $$
Properties (4) and (6) follow from this.
Property (7) follows from property (6) and formula \eqref{eq:transpose-formula-new}. 

Finally, for property (5), we combine properties (1) and (4) and Corollary \ref{cor:p-exceptional-is-eta-nul} as follows.
The exceptional set $E:=\{\gamma\in\Gamma:A\rho^*(\gamma)<b(\gamma)\}$ is Borel by Lemma \ref{lem:regularity-of-various-maps-on-Curv} and the continuity of $b$, and $\Mod_p(E)=0$ by property (1).
Corollary \ref{cor:p-exceptional-is-eta-nul} implies that $\eta(E)=0$ for all $\eta\in\B(E)$.
Note that $\eta^*\resmes E\in\B(E)$ by Remark \ref{rem:small-lemma}. Thus $\eta^*(E)=0$.
It follows that $A\rho_n\geq b$ holds $\eta^*$-a.e. and this combined with property (4) yields that $A\rho^*=b$ holds $\eta^*$-a.e., as desired.
\end{proof}
%------------------------------
\section{Gradient curves for \texorpdfstring{$p$}{p}-harmonic functions} \label{sec:proof-of-main}

In this section we give the proof of the main theorem, Theorem \ref{thm:intro-main}.  
We first explain the idea of the proof. 
Let $X=(X,d,\mu)$ be a complete, doubling $p$-Poincar\'{e} space, $1<p<\infty$.
Let $\Om'\subset X$ be a domain and $u\in N^{1,p}_{\loc}(\Om')$ a $p$-harmonic function. 
We fix a bounded subdomain $\Om\subset\subset\Om'$ such that $\mu(\partial\Om)=0$ and $u$ is nonconstant in $\Om$. 
As explained in Section \ref{sec:energy}, $u|_\Om$ is the unique minimizer of 
\begin{equation}
    \inf\Big\{\int_{\Om} g_v^pd\mu:v-u|_\Om\in N^{1,p}_0(\Om)\Big\}.
\end{equation}
By Proposition \ref{prop:equivalence} we find that $g_u=\rho^*$ $\mu$-a.e. in $\Om$, where $\rho^*$ is the unique weak minimizer of the generalized modulus problem
\[ \Mod_p(\Gamma,b)=
\inf\Big\{\int_{\overline{\Om}}\rho d\mu:\rho\in \A(\Gamma,b)\Big\}. \]
Here
\begin{equation} \label{eq:Gamma}
    \Gamma:=\{\gamma\in\Curv(\overline{\Om}):\gamma(0),\gamma(\ell_\gamma)\in\partial\Om\}
\end{equation}
is the family of all boundary-to-boundary curves in $\overline{\Om}$ (which is closed by Lemma \ref{lem:regularity-of-various-maps-on-Curv}) and 
\begin{equation} \label{eq:b}
    b(\gamma)=|u(\gamma(0))-u(\gamma(\ell_\gamma))|\quad\text{for all }\gamma\in\Gamma.
\end{equation}
Since $u$ is nonconstant in $\Om$, we have $0<\Mod_p(\Gamma,b)<\infty$. Function $b$ is continuous on $\Gamma$ by the following lemma:

\begin{lemma}
Let $\Gamma$ denote the family of all boundary-to-boundary curves in $\overline{\Om}$, as in \eqref{eq:Gamma}, and $f\in C(\overline{\Om})$. Define $b\colon\Gamma\to\R$ by
$$ b(\gamma):=|f(\gamma(0))-f(\gamma(\ell_\gamma))|\quad\text{for all }\gamma\in\Gamma. $$
Then $b\in C(\Gamma)$.
\end{lemma}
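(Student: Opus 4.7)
The plan is to reduce continuity of $b$ to the continuity of the endpoint evaluation maps, which is already recorded in Lemma \ref{lem:regularity-of-various-maps-on-Curv}(4), and then to compose with the continuous function $f$.

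First I would observe that, by Lemma \ref{lem:regularity-of-various-maps-on-Curv}(4), the endpoint evaluation maps
\[
e_0, e_\ell \colon \Curv(\overline{\Om}) \to \overline{\Om},
\quad e_0(\gamma) = \gamma(0), \quad e_\ell(\gamma) = \gamma(\ell_\gamma),
\]
are continuous. Restricting to $\Gamma \subset \Curv(\overline{\Om})$, the maps $e_0|_\Gamma$ and $e_\ell|_\Gamma$ remain continuous as maps into $\overline{\Om}$.

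Next, since $f \in C(\overline{\Om})$, the compositions $f \circ e_0$ and $f \circ e_\ell$ are continuous real-valued functions on $\Gamma$. Finally, $b = |f \circ e_0 - f \circ e_\ell|$ is a continuous function on $\Gamma$ as an absolute value of a difference of continuous functions.

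There is no real obstacle here beyond invoking the right lemma; the only nontrivial point is the continuity of $e_\ell$, which is already embedded in Lemma \ref{lem:regularity-of-various-maps-on-Curv}(4) (this uses the fact that convergence in $(\Curv(\overline{\Om}), d_{\Curv})$ of $\gamma_n \to \gamma$ entails $d_C(R_{\gamma_n}\circ\sigma_n, R_\gamma\circ\tau_n) \to 0$ for suitable reparametrizations, from which $\gamma_n(\ell_{\gamma_n}) = R_{\gamma_n}(1) \to R_\gamma(1) = \gamma(\ell_\gamma)$ since the reparametrizations fix the endpoints $0$ and $1$).
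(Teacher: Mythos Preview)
Your proof is correct and follows essentially the same approach as the paper: invoke the continuity of the endpoint evaluation maps from Lemma \ref{lem:regularity-of-various-maps-on-Curv}(4), compose with the continuous function $f$, and conclude by noting that differences and absolute values preserve continuity. The parenthetical justification of why $e_\ell$ is continuous is extra detail the paper omits, but it is accurate and does no harm.
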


\begin{proof}
The endpoint evaluation maps are continuous from $(\Gamma,d_{\Curv})$ to $\overline{\Om}$ by Lemma \ref{lem:regularity-of-various-maps-on-Curv}, $f$ is continuous from $\partial\Om$ to $\R$, and the summation and absolute value operations are continuous functions on $\R$.
\end{proof}

We would like to use the duality machinery of Section \ref{sec:generalization}, and in particular Proposition \ref{prop:properties}, to find a Radon measure $\eta^*$ on $\Gamma$ that selects the gradient curves of $u$.
However, there are two serious issues that prevent us from employing Proposition \ref{prop:properties}: the family $(\Gamma,d_{\Curv})$ is not a compact family of curves (as it contains arbitrarily long curves) and $\inf_\Gamma b=0$.

We circumvent the problem by an approximation argument. Let us note that $\overline{\Om}$ is a compact set. We write
\begin{equation} \label{eqq:decomp-of-Gamma-1}
    \Gamma=\Gamma_0\cup\Big(\bigcup_{n=1}^\infty\Gamma_n\Big)
\end{equation}
where
$$ \Gamma_0:=\{\gamma\in\Gamma:b(\gamma)=0\} 
\quad\text{and}\quad
\Gamma_n:=\{\gamma\in\Gamma:\ell(\gamma)\leq n\text{ and } b(\gamma)\geq \tfrac{1}{n}\}\quad\text{for }n=1,2,3,\ldots. $$
For a fixed $n=1,2,3,\ldots$, $\Gamma_n$ is a compact family of curves due to Lemma \ref{lem:curves-with-bdd-length-compact} and the continuity of $b$, and $\inf_{\Gamma_n}b \geq \frac{1}{n}$. Consequently, by Lemma \ref{lem:continuous-densities}
$$ \Mod_p(\Gamma_n,b)=\Mod_p^c(\Gamma_n,b). $$
Now we are in a position to apply Proposition \ref{prop:properties} with $\Gamma_n\subset\Curv(\overline{\Om})$ and $b\in C(\Gamma_n)$. We conclude the following.

\begin{proposition} \label{prop:approximate-rhos-and-etas}
Let $X=(X,d,\mu)$ be a complete, doubling $p$-Poincar\'{e} space, $1<p<\infty$.
Suppose that $\Om'\subset X$ is a domain and $u\in N^{1,p}_{\loc}(\Om')$ a nonconstant $p$-harmonic function. 
Let us fix a bounded subdomain $\Om\subset\subset\Om'$ such that $\mu(\partial\Om)=0$. Let
$$ \Gamma:=\{\gamma\in\Curv(\overline{\Om}):\gamma(0),\gamma(\ell_\gamma)\in\partial\Om\} $$ 
and 
$$ b(\gamma)=|u(\gamma(0))-u(\gamma(\ell_\gamma))|\quad\text{for all }\gamma\in\Gamma. $$
We decompose $\Gamma$ as in \eqref{eqq:decomp-of-Gamma-1}.
Then for any $n=1,2,\ldots$ there exist $\rho_n\in\LB^p_+(\overline{\Om})$ and $\eta_n\in\M_+(\Gamma_n)$ such that the following hold:
\begin{enumerate}
    \item $\int_\gamma\rho_nds\geq b(\gamma)$ for $p$-a.e. $\gamma\in\Gamma_n$.
    \item $\eta_n(\Gamma_n)\leq M_n:=n\big(\Mod_p^c(\Gamma_n,b)\big)^{1/p}$ and $\eta_n\in\B(\Gamma_n)$.
    \item $(\Mod_p^c(\Gamma_n,b))^{1/p}=\|\rho_n\|_{L^p(\overline{\Om})}=\int_{\Gamma_n}bd\eta_n$.
    \item We have that
    $$ \int_{\Gamma_n}\int_\gamma\rho_nds-b(\gamma)d\eta_n=0. $$ 
    \item Measure $\eta_n$ is supported on those curves $\gamma\in\Gamma_n$ for which $\int_\gamma\rho_nds=b(\gamma)$. In other words, $\int_\gamma\rho_nds=b(\gamma)$ holds for $\eta_n$-a.e. curve $\gamma\in\Gamma_n$.
    \item For $\mu$-a.e.
    $$ \frac{d(A^\intercal\eta_n)}{d\mu}=\big(\frac{\rho_n}{\|\rho_n\|_p}\big)^{p-1} $$
    \item For any $\rho\in \Bor_+(X)$ we have that
    \begin{equation} \label{eq:useful-identity-for-dual-measure}
    \int_\Gamma\int_\gamma\rho dsd\eta_n=\int_{\overline{\Om}}\rho\Big(\frac{\rho_n}{\|\rho_n\|_p}\Big)^{p-1}d\mu.    
    \end{equation}
\end{enumerate}    
\end{proposition}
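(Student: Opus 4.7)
The plan is to apply Proposition \ref{prop:properties} to each pair $(\Gamma_n, b)$ individually, taking the compact ambient space to be $\overline{\Omega}$. Since $X$ is complete and doubling and $\Omega$ is bounded, the closure $\overline{\Omega}$ is totally bounded and complete, hence compact. Thus we may work in the compact metric measure space obtained by restricting $d$ and $\mu$ to $\overline{\Omega}$.

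Next, I would verify that the hypotheses of Proposition \ref{prop:properties} hold for $(\Gamma_n, b)$ whenever $n \geq 1$. The set $\{\gamma \in \Curv(\overline{\Omega}) : \ell(\gamma) \leq n\}$ is compact by Lemma \ref{lem:curves-with-bdd-length-compact}, and $\Gamma_n$ is its intersection with the closed sets $\{\gamma : b(\gamma) \geq 1/n\}$ (closed by continuity of $b$) and $\{\gamma : \gamma(0), \gamma(\ell_\gamma) \in \partial\Omega\}$ (closed by continuity of the endpoint evaluation maps, Lemma \ref{lem:regularity-of-various-maps-on-Curv}). Hence $\Gamma_n$ is compact, the restriction $b|_{\Gamma_n}$ is continuous, and $\inf_{\Gamma_n} b \geq 1/n > 0$. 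Since $\Gamma_n \subset \Gamma$, any density admissible for $(\Gamma, b)$ is admissible for $(\Gamma_n, b)$, so by Lemma \ref{lem:continuous-densities} one has $\Mod_p^c(\Gamma_n, b) = \Mod_p(\Gamma_n, b) \leq \Mod_p(\Gamma, b) < \infty$, where finiteness of the last quantity comes from the identification with the $p$-harmonic energy established in Section \ref{sec:energy}.

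In the generic case $\Mod_p^c(\Gamma_n, b) > 0$, I would apply Proposition \ref{prop:properties} to obtain $\rho_n \in \LB^p_+(\overline{\Omega})$ and $\eta_n \in \M_+(\Gamma_n)$ with the threshold $M$ appearing in that proposition being $(\Mod_p^c(\Gamma_n, b))^{1/p}/\inf_{\Gamma_n} b \leq n(\Mod_p^c(\Gamma_n, b))^{1/p} = M_n$. Items (1)--(6) of the present statement transcribe items (1)--(6) of Proposition \ref{prop:properties} verbatim, while item (7) follows by combining item (6) with the transpose identity \eqref{eq:transpose-formula-new}. In the degenerate case $\Mod_p^c(\Gamma_n, b) = 0$ (which may occur, for example, when $\Gamma_n$ is empty), I would take $\rho_n = 0$ and $\eta_n = 0$; all seven items then hold trivially.

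The main obstacle here is organizational rather than analytical: the substantive work was carried out in Section \ref{sec:dual} via the Kneser--Fan minimax theorem. Since $\Gamma$ itself is neither compact (it contains arbitrarily long curves) nor satisfies $\inf_\Gamma b > 0$ (curves whose endpoints share the same value of $u$ have $b = 0$), the decomposition \eqref{eqq:decomp-of-Gamma-1} is precisely engineered so that each $\Gamma_n$ is compact and $b$ has a positive lower bound on it, allowing the duality machinery of Section \ref{sec:dual} to be invoked piecewise. The subsequent task of assembling the $\eta_n$ into a single measure $\eta^*$ on all of $\Gamma$ is deferred to the remainder of Section \ref{sec:proof-of-main}.
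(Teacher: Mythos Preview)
Your proposal is correct and follows essentially the same approach as the paper: the paper's argument for this proposition is contained in the paragraph immediately preceding its statement, which simply verifies that $\Gamma_n$ is compact with $\inf_{\Gamma_n} b \geq 1/n$ and then invokes Proposition \ref{prop:properties}. Your treatment is in fact slightly more careful than the paper's, since you explicitly handle the degenerate case $\Mod_p^c(\Gamma_n,b)=0$ (e.g.\ $\Gamma_n=\emptyset$ for small $n$), which the paper passes over in silence.
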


The goal of this section is to prove that $\rho_n\to\rho^*$ where $\rho^*=g_u$, as explained above, and $\eta_n\to\eta^*$ for a suitable limit measure $\eta^*$, and that properties (1)-(7) are preserved when we pass to the limit $n\to\infty$.
In Section \ref{ssec:convergence-of-rhos} we consider the convergence $\rho_n\to\rho^*$ and prove that $\|\rho_n\|_{L^p(\overline{\Om})}\to\|\rho^*\|_{L^p(\overline{\Om})}$. In Section \ref{ssec:convergenge-of-etas} we consider the convergence $\eta\to\eta^*$ and prove that $\int_{\Gamma_n}bd\eta_n\to\int_\Gamma bd\eta^*$. In Section \ref{ssec:key-propeties-of-rho-and-eta} we check that properties (4)-(7) are preserved at the limit. In Section \ref{ssec:conclusion} we collect these facts and prove Theorem \ref{thm:intro-main}.
%------------------------------
\subsection{Convergence of \texorpdfstring{$(\rho_n)$}{rhon}} \label{ssec:convergence-of-rhos}

\begin{lemma} \label{lem:convergence-of-rhon}
Under the assumptions of Proposition \ref{prop:approximate-rhos-and-etas}, let $\rho^*\in\LB^p_+(\overline{\Om})$ be the weak minimizer of $\Mod_p(\Gamma,b)$ and $\rho_n\in\LB^p_+(\overline{\Om})$ be the weak minimizer of $\Mod_p^c(\Gamma_n,b)$. Then $\rho_n\xrightarrow{n\to\infty}\rho^*$ in $L^p(\overline{\Om})$, up to a subsequence.
\end{lemma}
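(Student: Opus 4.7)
The plan is to extract an $L^p$-weak subsequential limit of $(\rho_n)$, identify it with $\rho^*$ via uniqueness, and upgrade to strong convergence via the Radon–Riesz property of uniformly convex $L^p$.

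First, I would establish a uniform bound $\|\rho_n\|_{L^p(\overline{\Om})}\leq \|\rho^*\|_{L^p(\overline{\Om})}$. Since the exhaustion in \eqref{eqq:decomp-of-Gamma-1} satisfies $\Gamma_n\subset\Gamma_{n+1}\subset\Gamma$, any function that is (weakly) admissible for $\Gamma$ is also (weakly) admissible for $\Gamma_n$; in particular $\rho^*$ is, so Proposition \ref{prop:approximate-rhos-and-etas}(3) combined with Lemma \ref{lem:continuous-densities} yields $\|\rho_n\|_{L^p(\overline{\Om})}^p=\Mod_p^c(\Gamma_n,b)=\Mod_p(\Gamma_n,b)\leq \Mod_p(\Gamma,b)=\|\rho^*\|_{L^p(\overline{\Om})}^p$. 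By reflexivity of $L^p(\overline{\Om})$, I pass to a subsequence $\rho_{n_k}\xrightharpoonup{k\to\infty}\bar\rho$ in $L^p(\overline{\Om})$. By Mazur's lemma and the convexity of nonnegative functions, finite convex combinations $\tilde\rho_k$ of the tail $(\rho_{n_j})_{j\geq k}$ converge strongly in $L^p$ to $\bar\rho$, and the $\tilde\rho_k$ can be chosen in $\LB^p_+(\overline{\Om})$.

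The main step is to show $\bar\rho$ is weakly admissible for $\Gamma$. Since $\Gamma_n$ is increasing, for each fixed $n$ and every $j$ with $n_j\geq n$, Proposition \ref{prop:approximate-rhos-and-etas}(1) gives $\int_\gamma \rho_{n_j}ds\geq b(\gamma)$ for $p$-a.e.\ $\gamma\in\Gamma_n$; by convexity, the same holds for the $\tilde\rho_k$ (built from tails with $n_j\geq n$) for $p$-a.e.\ $\gamma\in\Gamma_n$. Fuglede's Lemma \ref{lem:Fuglede}, after a further subsequence, then yields $\int_\gamma \bar\rho\,ds\geq b(\gamma)$ for $p$-a.e.\ $\gamma\in\Gamma_n$. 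Since the countable union of $p$-exceptional families is $p$-exceptional, this holds for $p$-a.e.\ $\gamma\in\bigcup_n\Gamma_n=\Gamma\setminus\Gamma_0$, and trivially on $\Gamma_0$ where $b\equiv 0$. Hence $\bar\rho$ is weakly admissible for $\Gamma$. This is the step I expect to be the main obstacle, since one must carefully combine the increasing exhaustion with the exceptional sets produced by each application of Fuglede's lemma through a diagonal argument.

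Finally, weak admissibility of $\bar\rho$ for $\Gamma$ gives $\|\bar\rho\|_{L^p(\overline{\Om})}^p\geq \Mod_p(\Gamma,b)=\|\rho^*\|_{L^p(\overline{\Om})}^p$, while weak lower semi-continuity of the norm yields $\|\bar\rho\|_{L^p(\overline{\Om})}\leq \liminf_k\|\rho_{n_k}\|_{L^p(\overline{\Om})}\leq \|\rho^*\|_{L^p(\overline{\Om})}$. Therefore $\|\bar\rho\|_{L^p(\overline{\Om})}=\|\rho^*\|_{L^p(\overline{\Om})}$, and the uniqueness statement in Lemma \ref{lem:primal-minimizer} forces $\bar\rho=\rho^*$ $\mu$-a.e. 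Moreover $\|\rho_{n_k}\|_{L^p(\overline{\Om})}\to \|\rho^*\|_{L^p(\overline{\Om})}$, and because $L^p(\overline{\Om})$ is uniformly convex for $1<p<\infty$, weak convergence combined with convergence of norms upgrades to strong convergence: $\rho_{n_k}\to\rho^*$ in $L^p(\overline{\Om})$, as claimed.
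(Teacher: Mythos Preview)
Your proof is correct and follows essentially the same approach as the paper's: bound $\|\rho_n\|_{L^p}\leq\|\rho^*\|_{L^p}$ via the nesting $\Gamma_n\subset\Gamma$, extract a weak limit, use Mazur together with Fuglede's lemma to show the limit is weakly admissible for each $\Gamma_n$ and hence for $\Gamma$ (countable union of exceptional sets, plus the trivial case $\Gamma_0$), identify it with $\rho^*$ via uniqueness, and upgrade to strong convergence by norm convergence in uniformly convex $L^p$. The only cosmetic difference is ordering: the paper first deduces $\|\rho_n\|_{L^p}\leq\|\tilde\rho\|_{L^p}$ from $\tilde\rho\in\overline{\A(\Gamma_n,b)}$ to get strong convergence, and only afterwards verifies weak admissibility for all of $\Gamma$, whereas you postpone the Radon--Riesz step to the end; also note that no genuine diagonal argument is needed in your ``main obstacle'' step, since a single Fuglede subsequence works for $p$-a.e.\ curve simultaneously.
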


\begin{proof}
Note that
$$ \A(\Gamma_1,b)\supset\A(\Gamma_2,b)\supset
\ldots\supset\A(\Gamma,b), $$
and thus
\begin{equation} \label{eq:monotonicity-of-rhon}
    \|\rho_1\|_{L^p(\overline{\Om})}\leq\|\rho_2\|_{L^p(\overline{\Om})}\leq \ldots\leq\|\rho^*\|_{L^p(\overline{\Om})}.
\end{equation}
It follows that $(\rho_n)_{n=1}^\infty$ is bounded in $L^p(\overline{\Om})$ and therefore there exists $\Tilde{\rho}\in\LB^p(\overline{\Om})$ such that
$\rho_n\xrightharpoonup{n\to\infty}\Tilde{\rho}$ weakly in $L^p(\overline{\Om})$, up to a subsequence which we still denote by itself. By a standard ``subsequence of a subsequence argument'' it will suffice to prove that, along all such subsequences where the weak limit exists, $\Tilde{\rho}=\rho^*$ a.e. and that $\rho_n \to \Tilde{\rho}$ in $L^p(X)$. 

Each $\A(\Gamma_n,b)$ is a convex subset of $L^p(\overline{\Om})$, and thus by Mazur's and Fuglede's lemmas we conclude that for any fixed $n\in\N$ we have $\tilde{\rho}\in\overline{\A(\Gamma_n,b)}$, i.e.
\[ \int_\gamma\tilde{\rho}ds\geq b(\gamma)
\quad\text{for $p$-a.e. }\gamma\in\Gamma_n. \]
We use Lemma \ref{lem:relaxation-of-admissibility} and Lemma \ref{lem:continuous-densities} to conclude $\|\rho_n\|_{L^p(\overline{\Om})}=\Mod_p^c(\Gamma_n,b)=\Mod_p(\Gamma_n,b)\leq \|\tilde{\rho}\|_{L^p(\overline{\Om})}$.
It follows that $\limsup_{n\to\infty}\|\rho_n\|_{L^p(\overline{\Om})}\leq\|\tilde{\rho}\|_{L^p(\overline{\Om})}$, and thus we can update the weak convergence $\rho_n\xrightharpoonup{n\to\infty}\Tilde{\rho}$ into strong convergence $\rho_n\xrightarrow{n\to\infty}\Tilde{\rho}$.

Now we have obtained the following properties of $\tilde{\rho}$:
$$ \rho_n\xrightarrow{n\to\infty}\tilde{\rho} 
\quad\text{and}\quad
\Tilde{\rho}\in\bigcap_{n=1}^\infty\overline{\A(\Gamma_n,b)}. $$
It is sufficient to prove that $\Tilde{\rho}$ is weakly admissible for $\Gamma$. Note that $\|\Tilde{\rho}\|_{L^p(\overline{\Omega})}=\lim_{n\to \infty} \|\rho_n\|_{L^p(\overline{\Omega})}\leq \Mod_p(\Gamma,b)$. Then, by the uniqueness of the weak minimizer and Lemma \ref{lem:primal-minimizer}  we have $\tilde{\rho}=\rho^*$ $\mu$-a.e. in $\overline{\Om}$. 

We check that the weak admissibility, i.e. that $\tilde{\rho}\in\overline{\A(\Gamma,b)}$ holds. For a fixed $n\in\N$, since $\rho\in\overline{\A(\Gamma,n)}$, there exists an exceptional set of curves $E_n\subset\Gamma_n$ such that
$\Mod_p(E_n)=0$ and 
$$ \int_\gamma\tilde{\rho}ds\ge b(\gamma)
\quad\text{for all }\gamma\in \Gamma_n\setminus E_n. $$
By \cite{Bjorn2011}*{Corollary 1.38} we have $\Mod_p(\cup_{n=1}^\infty E_n)=0$. 
Recall that $\Gamma=\Gamma_0\cup(\bigcup_{n=1}^\infty \Gamma_n)$. We claim that
$$ \int_\gamma\tilde{\rho}ds\ge b(\gamma)
\quad\text{for all }\gamma\in \Gamma\setminus 
\Big(\bigcup_{n=1}^\infty E_n\Big). $$
Indeed, if $\gamma\in\Gamma_0$, then
$$ \int_\gamma\tilde{\rho}ds\geq 0=b(\gamma), $$
and
$$ \int_\gamma\tilde{\rho}ds\ge b(\gamma)
\quad\text{for all }\gamma\in \Big(\bigcup_{n=1}^\infty\Gamma_n\Big)\setminus 
\Big(\bigcup_{n=1}^\infty E_n\Big). $$
The proof is finished by applying Lemma \ref{lem:closure}.
\end{proof}

The previous lemma together with Proposition \ref{prop:equivalence} implies the following.

\begin{proposition} \label{prop:modulus-at-the-limit}
Under the assumptions of Proposition \ref{prop:approximate-rhos-and-etas}, let $\rho^*\in\LB^p_+(\overline{\Om})$ be the weak minimizer of $\Mod_p(\Gamma,b)$ and $\rho_n\in\LB^p_+(\overline{\Om})$ be the weak minimizer of $\Mod_p^c(\Gamma_n,b)$.
We have the following chain of equalities:
$$ \int_\Om g_u^pd\mu=\int_{\overline{\Om}}(\rho^*)^pd\mu=\Mod_p(\Gamma,b)=\lim_{n\to\infty}\Mod_p^c(\Gamma_n,b)=\lim_{n\to\infty}\int_{\overline{\Om}}\rho_n^pd\mu. $$
\end{proposition}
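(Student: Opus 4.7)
The plan is to assemble this chain of equalities directly from results already in hand; no new mechanism is required. I would first dispatch the initial two equalities together. Proposition~\ref{prop:equivalence} gives $g_u=\rho^*$ $\mu$-a.e. in $\Om$, and since $\mu(\partial\Om)=0$ by assumption, this yields $\int_\Om g_u^p\,d\mu=\int_{\overline{\Om}}(\rho^*)^p\,d\mu$. The identity $\int_{\overline{\Om}}(\rho^*)^p\,d\mu=\Mod_p(\Gamma,b)$ is then precisely the defining property of the weak minimizer supplied by Lemma~\ref{lem:primal-minimizer}.

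Next I would handle the last equality, $\lim_{n\to\infty}\Mod_p^c(\Gamma_n,b)=\lim_{n\to\infty}\int_{\overline{\Om}}\rho_n^p\,d\mu$. This is immediate from item~(3) of Proposition~\ref{prop:approximate-rhos-and-etas}, which says that $(\Mod_p^c(\Gamma_n,b))^{1/p}=\|\rho_n\|_{L^p(\overline{\Om})}$ for every $n$; raising to the $p$-th power and taking the limit finishes it.

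The remaining equality, $\Mod_p(\Gamma,b)=\lim_{n\to\infty}\Mod_p^c(\Gamma_n,b)$, is the only genuinely substantive step, but it too is almost automatic. Lemma~\ref{lem:convergence-of-rhon} produces a subsequence along which $\rho_{n_k}\to\rho^*$ in $L^p(\overline{\Om})$, hence $\|\rho_{n_k}\|_{L^p(\overline{\Om})}\to\|\rho^*\|_{L^p(\overline{\Om})}$. The monotonicity recorded in \eqref{eq:monotonicity-of-rhon}, namely $\|\rho_1\|_{L^p(\overline{\Om})}\leq\|\rho_2\|_{L^p(\overline{\Om})}\leq\cdots\leq\|\rho^*\|_{L^p(\overline{\Om})}$, upgrades this subsequential convergence to convergence of the full sequence of norms. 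Raising to the $p$-th power gives $\Mod_p^c(\Gamma_n,b)=\|\rho_n\|_{L^p(\overline{\Om})}^p\to\|\rho^*\|_{L^p(\overline{\Om})}^p=\Mod_p(\Gamma,b)$, closing the chain.

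There is no real obstacle here: the proposition is essentially a bookkeeping corollary of Proposition~\ref{prop:equivalence}, Lemma~\ref{lem:primal-minimizer}, Lemma~\ref{lem:convergence-of-rhon}, and item~(3) of Proposition~\ref{prop:approximate-rhos-and-etas}; the only small subtlety is the use of monotonicity to strengthen subsequential $L^p$-convergence to full convergence of norms.
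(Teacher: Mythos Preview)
Your proposal is correct and matches the paper's approach exactly: the paper's proof is a single sentence stating that the result follows from Lemma~\ref{lem:convergence-of-rhon} together with Proposition~\ref{prop:equivalence}, and you have simply unpacked this into its constituent pieces. Your use of the monotonicity \eqref{eq:monotonicity-of-rhon} to pass from subsequential to full convergence of norms is a sensible precaution given that Lemma~\ref{lem:convergence-of-rhon} is stated only up to a subsequence.
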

%------------------------------
\subsection{Convergence of \texorpdfstring{$(\eta_n)$}{etan}} \label{ssec:convergenge-of-etas}

For the convergence of $(\eta_n)$, let us point out that we cannot directly apply Prokhorov's theorem \ref{thm:prokhorov} to find a limit, because the $\eta_n$'s are not necessarily uniformly bounded. See Example \ref{ex:non-Radonness}. To fix this, we extract a limit of $(\eta_n)$ in two steps. In the first step, we extract ``local'' limits of $(\eta_n)$, and in the second step we build a desired global limit $\eta^*$ by using exhaustion.

For the local limits, let
\begin{equation} \label{eq:Tilde-Gamma-def}
    \tilde{\Gamma}_m:=\Big\{\gamma\in\Gamma:\frac{1}{m}\leq\operatorname{diam}(\gamma)\text{ and }\ell(\gamma)\leq m\Big\}
\end{equation}
for $m=1,2,\ldots$ so that
$$ \Gamma=\Gamma_0\cup\big(\bigcup_{m=1}^\infty\Tilde{\Gamma}_m\big). $$
By Lemma \ref{lem:regularity-of-various-maps-on-Curv} and Lemma \ref{lem:diam-continuous} each $\tilde{\Gamma}_m$ is compact.

The following lemma yields the existence of a subsequence of $(\eta_n)_{n=1}^\infty$, which we still denote by itself, such that the sequence of restricted measures $(\eta_n\resmes\tilde{\Gamma}_m)_{n=1}^\infty$ converges in the weak topology of $\M_+(\tilde{\Gamma}_m)$ for all $m=1,2,\ldots$.

\begin{lemma} \label{lem:local-limits}
Under the assumptions of Proposition \ref{prop:approximate-rhos-and-etas}, let $\eta_n\in\M_+(\Gamma_n)$ be the maximizer of $\eta\mapsto\int_{\Gamma_n}bd\eta$.
There exists a subsequence of $(\eta_n)_{n=1}^\infty$, still denoted by itself, such that for all $m=1,2,\ldots$ the sequence of restricted measures
$(\eta_n\resmes\tilde{\Gamma}_m)_{n=1}^\infty$ converges weakly to a Radon measure $\tilde{\eta}_m$ on $\tilde{\Gamma}_m$.
\end{lemma}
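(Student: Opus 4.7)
The plan is to apply Prokhorov's theorem (Theorem \ref{thm:prokhorov}) on each compact set $\tilde{\Gamma}_m$ separately and then extract a common subsequence by a standard diagonal argument. Each $\tilde{\Gamma}_m$ is a closed subset of the set of curves of length at most $m$, which is compact by Lemma \ref{lem:curves-with-bdd-length-compact}; closedness of the diameter constraint follows from Lemma \ref{lem:diam-continuous} and closedness of $\Gamma$ itself from Lemma \ref{lem:regularity-of-various-maps-on-Curv}. Thus every $\tilde{\Gamma}_m$ is a compact, hence complete separable, metric space, and equi-tightness of the restricted measures $(\eta_n \resmes \tilde{\Gamma}_m)_n$ is automatic.

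The only real content is therefore the uniform mass bound $\sup_n \eta_n(\tilde{\Gamma}_m)<\infty$. This is obtained as follows. Since $\eta_n \in \B(\Gamma_n)$, identity \eqref{eq:transpose-formula-new} with $\rho \equiv 1$ and Hölder's inequality give
\[
    \int_{\Gamma_n}\ell(\gamma)\,d\eta_n
    =\int_{\overline{\Om}}\frac{d(A^\intercal\eta_n)}{d\mu}\,d\mu
    \le \mu(\overline{\Om})^{1/p}\Big\|\frac{d(A^\intercal\eta_n)}{d\mu}\Big\|_{L^q(\overline{\Om})}
    \le \mu(\overline{\Om})^{1/p},
\]
using that $\mu(\overline{\Om})<\infty$ because $\Om$ is bounded in a doubling space. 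Since every $\gamma \in \tilde{\Gamma}_m$ satisfies $\ell(\gamma) \ge \operatorname{diam}(\gamma) \ge 1/m$, Chebyshev-type reasoning yields
\[
    \sup_n \eta_n(\tilde{\Gamma}_m) \le m\,\mu(\overline{\Om})^{1/p} < \infty.
\]

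With this bound in hand, for each fixed $m$ Prokhorov's theorem produces a weakly convergent subsequence of $(\eta_n \resmes \tilde{\Gamma}_m)_n$ in $\M_+(\tilde{\Gamma}_m)$. A diagonal extraction then yields a single subsequence, still denoted $(\eta_n)$, such that $(\eta_n \resmes \tilde{\Gamma}_m)_n$ converges weakly to a Radon measure $\tilde{\eta}_m$ on $\tilde{\Gamma}_m$ for every $m$ simultaneously. There is no real obstacle here; the main conceptual point is that, although the total masses $\eta_n(\Gamma)$ may blow up (as in Example \ref{ex:non-Radonness}), they are controlled \emph{locally} on each layer $\tilde{\Gamma}_m$, since a lower bound on diameter forces a lower bound on length, and lengths are integrable against $\eta_n$ with a uniform bound coming from the dual admissibility $\eta_n \in \B(\Gamma_n)$.
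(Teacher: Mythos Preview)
Your proof is correct and follows essentially the same approach as the paper: compactness of $\tilde{\Gamma}_m$, the uniform mass bound via $\int_{\Gamma_n}\ell(\gamma)\,d\eta_n\le\mu(\overline{\Om})^{1/p}$ combined with $\ell(\gamma)\ge\operatorname{diam}(\gamma)\ge 1/m$, then Prokhorov plus a diagonal argument. The only cosmetic difference is that the paper invokes item~(7) of Proposition~\ref{prop:approximate-rhos-and-etas} with $\rho\equiv 1$ to write $\int_{\Gamma_n}\ell(\gamma)\,d\eta_n=\int_{\overline{\Om}}(\rho_n/\|\rho_n\|_{L^p})^{p-1}\,d\mu$ before applying H\"older, whereas you go directly through $\eta_n\in\B(\Gamma_n)$; these are equivalent since item~(6) identifies the density.
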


\begin{proof}
Let us fix $m=1,2,\ldots$ and consider the sequence of $\eta_n$'s, restricted to $\tilde{\Gamma}_m$, that is, $(\eta_n\resmes\tilde{\Gamma}_m)_{n=1}^\infty$.
Note that $\tilde{\Gamma}_m$ is compact by Lemma \ref{lem:regularity-of-various-maps-on-Curv} and  Lemma \ref{lem:diam-continuous}. Moreover, $(\eta_n\resmes\tilde{\Gamma}_m)_{n=1}^\infty$ is uniformly bounded. Indeed, if we use \eqref{eq:useful-identity-for-dual-measure} with $\rho=1$, we obtain that
$$ \int_{\Gamma_n}\ell(\gamma)d\eta_n=\int_{\overline{\Om}}\Big(\frac{\rho_n}{\|\rho_n\|_{L^p(\overline{\Om})}}\Big)^{p-1}d\mu. $$
We estimate the right-hand side of the above inequality from above by Hölder's inequality:
\begin{align*}
    \int_{\overline{\Om}}\Big(\frac{\rho_n}{\|\rho_n\|_{L^p(\overline{\Om})}}\Big)^{p-1}d\mu
    \leq \mu(\overline{\Om})^{1/p}.
\end{align*}
We estimate the left-hand side from below as follows:
\begin{align*}
    \int_{\Gamma_n}\ell(\gamma)d\eta_n
    \geq
    \int_{\Gamma_n\cap\Tilde{\Gamma}_m}\ell(\gamma)d\eta_n
    \geq
    \int_{\Gamma_n\cap\tilde{\Gamma}_m}\operatorname{diam}(\gamma)d\eta_n
    \geq
    \frac{1}{m}\eta_n(\Gamma_n\cap\tilde{\Gamma}_m)
    =\frac{1}{m}(\eta_n\resmes\tilde{\Gamma}_m)(\tilde{\Gamma}_m).
\end{align*}
Here we used the fact that $\operatorname{diam}(\gamma)\leq\ell(\gamma)$.
It follows that
$$ (\eta_n\resmes\tilde{\Gamma}_m)(\tilde{\Gamma})\leq m\mu(\overline{\Om})^{1/p}
\quad\text{for all }n=1,2,3,\ldots. $$
By Prokhorov's theorem, Theorem \ref{thm:prokhorov}, $(\eta_n\resmes\tilde{\Gamma}_m)_{n=1}^\infty$ has a subsequential limit $\tilde{\eta}_m$ with respect to the weak topology of $\M_+(\tilde{\Gamma}_m)$. 

Finally, we may use the standard diagonal argument to extract a single subsequence of $(\eta_n)_{n=1}^\infty$, still denoted by itself, such that for all $m=1,2,\ldots$ 
$$ \eta_n\resmes\tilde{\Gamma}_m\xrightarrow{n\to\infty}\tilde{\eta}_m $$ 
with respect to the weak topology of $\M_+(\tilde{\Gamma}_m)$. 
\end{proof}

\begin{lemma}[Properties of local limits] \label{lem:properties-of-local-limits}
Under the assumptions of Proposition \ref{prop:approximate-rhos-and-etas}, let $\Gamma_n$ and $\eta_n$ be given by Proposition \ref{prop:approximate-rhos-and-etas}, $\Tilde{\Gamma}_m$ as in \eqref{eq:Tilde-Gamma-def}, and let $(\eta_n)_{n=1}^\infty$ denote a sequence given by Lemma \ref{lem:local-limits} such that for all $m=1,2,\ldots$ 
$$ \eta_n\resmes\tilde{\Gamma}_m\xrightarrow{n\to\infty}\tilde{\eta}_m $$ 
with respect to the weak topology of $\M_+(\tilde{\Gamma}_m)$.
Then the measures $(\tilde{\eta}_m)_{m=1}^\infty$ have the following properties:
\begin{enumerate} 
    \item Monotonicity: $\tilde{\eta}_m(\Sigma )\leq\tilde{\eta}_{m+1}(\Sigma )$ for all $\Sigma \subset\Gamma$ Borel.
    \item $\int_\Gamma bd\tilde{\eta}_m\leq (\Mod_p(\Gamma,b))^{1/p}$.
    \item $\tilde{\eta}_m\in\B(\Tilde{\Gamma}_m)$.
\end{enumerate}
\end{lemma}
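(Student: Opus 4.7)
The plan is to transfer each of the three properties from the approximating sequence $(\eta_n\resmes\tilde{\Gamma}_m)_{n=1}^\infty$ to its weak limit $\tilde{\eta}_m$, exploiting that each $\tilde{\Gamma}_m$ is compact (so weak convergence there is a genuine narrow convergence tested against all continuous functions) and that $\eta_n$ is supported in $\Gamma_n$.

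For property $(1)$, the key set-theoretic fact is the inclusion $\tilde{\Gamma}_m\subset\tilde{\Gamma}_{m+1}$: if $\operatorname{diam}(\gamma)\geq 1/m$ and $\ell(\gamma)\leq m$ then $\operatorname{diam}(\gamma)\geq 1/(m+1)$ and $\ell(\gamma)\leq m+1$. Hence, for every bounded $h\in C_+(\Gamma)$,
\begin{align*}
\int_\Gamma h\,d\tilde{\eta}_m
&= \lim_{n\to\infty}\int_{\tilde{\Gamma}_m} h\,d\eta_n
\leq \lim_{n\to\infty}\int_{\tilde{\Gamma}_{m+1}} h\,d\eta_n
= \int_\Gamma h\,d\tilde{\eta}_{m+1},
\end{align*}
using $h\geq 0$ for the middle inequality. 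To pass from this to $\tilde{\eta}_m(\Sigma)\leq\tilde{\eta}_{m+1}(\Sigma)$ on all Borel $\Sigma\subset\Gamma$, I would use the standard regularity upgrade: for any open $U\subset\Gamma$, approximate $\mathbbm{1}_U$ from below by continuous functions $h_k(x)=\min\{1,k\operatorname{dist}_{\Curv}(x,\Gamma\setminus U)\}$ and apply monotone convergence to obtain $\tilde{\eta}_m(U)\leq\tilde{\eta}_{m+1}(U)$; then extend to Borel $\Sigma$ by outer regularity of finite Radon measures on Polish spaces.

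For property $(2)$, note that $u$ is continuous on the compact set $\overline{\Om}$, so $b\colon\Gamma\to[0,\infty)$ is bounded and continuous by the Lemma immediately preceding. Applying the weak convergence against the test function $h=b$ gives
\begin{align*}
\int_\Gamma b\,d\tilde{\eta}_m
&= \lim_{n\to\infty}\int_{\tilde{\Gamma}_m} b\,d\eta_n
\leq \liminf_{n\to\infty}\int_{\Gamma_n} b\,d\eta_n
= \liminf_{n\to\infty}\bigl(\Mod_p^c(\Gamma_n,b)\bigr)^{1/p},
\end{align*}
where the middle inequality uses that $\eta_n$ is supported on $\Gamma_n$ together with $b\geq 0$, and the final equality is property $(3)$ of Proposition \ref{prop:approximate-rhos-and-etas}. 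Proposition \ref{prop:modulus-at-the-limit} then identifies this last limit with $(\Mod_p(\Gamma,b))^{1/p}$.

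For property $(3)$, since $\eta_n\in\B(\Gamma_n)$, Remark \ref{rem:restriction-and-AT} (equivalently Remark \ref{rem:small-lemma}) yields $\eta_n\resmes\tilde{\Gamma}_m\in\B(\tilde{\Gamma}_m)$. As this restricted sequence converges weakly to $\tilde{\eta}_m$ in the compact metric space $\tilde{\Gamma}_m$, the closedness statement in Lemma \ref{lem:B-closed-wrt-weak-convergence-to-Radon-measure} (applied on $\tilde{\Gamma}_m$ rather than on $\Gamma$) yields $\tilde{\eta}_m\in\B(\tilde{\Gamma}_m)$. The only mildly delicate point in the whole argument is the passage from the inequality of continuous integrals to the measure-level monotonicity in $(1)$; everything else is a direct application of the weak convergence together with the uniform estimates already recorded for the $\eta_n$'s.
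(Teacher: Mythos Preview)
Your proof is correct and follows essentially the same route as the paper's own argument: for (1) you compare the restricted measures at the level of nonnegative continuous test functions, upgrade to open sets by monotone approximation of $\mathbbm{1}_U$, and then to Borel sets by outer regularity of Radon measures; for (2) you test weak convergence against the bounded continuous function $b$ and bound by $\int_{\Gamma_n} b\,d\eta_n=(\Mod_p^c(\Gamma_n,b))^{1/p}$ via Proposition~\ref{prop:approximate-rhos-and-etas}(3) and Proposition~\ref{prop:modulus-at-the-limit}; for (3) you invoke Remark~\ref{rem:small-lemma} and Lemma~\ref{lem:B-closed-wrt-weak-convergence-to-Radon-measure}. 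The only cosmetic difference is that you first establish $\int h\,d\tilde{\eta}_m\leq\int h\,d\tilde{\eta}_{m+1}$ for general bounded $h\in C_+(\Gamma)$ before specializing, whereas the paper works directly with the approximants $h_i\nearrow\mathbbm{1}_\Sigma$.
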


\begin{proof}
\begin{enumerate}
    \item Let $\Sigma \subset\Gamma$ be open. Then $\mathbbm{1}_\Sigma $ is lower semi-continuous and we can find a sequence $(h_i)$ of continuous, nonnegative functions such that $h_i\nearrow\mathbbm{1}_\Sigma $. Then
    $$ \int_\Gamma h_id(\eta_n\resmes\tilde{\Gamma}_m)\leq \int_\Gamma h_id(\eta_n\resmes\tilde{\Gamma}_{m+1}). $$
    Since $h_i\in C(\Gamma)$ is bounded, we may let $n\to\infty$ to obtain
    $$ \int_\Gamma h_id\tilde{\eta}_m\leq \int_\Gamma h_id\tilde{\eta}_{m+1}. $$
    Since $h_i\nearrow\mathbbm{1}_\Sigma $ we may let $i\to\infty$ to obtain
    $$ \tilde{\eta}_m(\Sigma )\leq \tilde{\eta}_{m+1}(\Sigma ). $$
    Finally, since $\tilde{\eta}_m$ and $\tilde{\eta}_{m+1}$ are Radon, we conclude that the claim holds for all Borel sets $\Sigma \subset\Gamma$.
    \item Since $b\in C(\Gamma)$ is bounded,
    $$ \int_\Gamma bd\tilde{\eta}_m=\lim_{n\to\infty}\int_\Gamma bd(\eta_n\resmes\tilde{\Gamma}_m). $$
    We use Proposition \ref{prop:approximate-rhos-and-etas}, item (3), and Proposition \ref{prop:modulus-at-the-limit} to obtain
    $$ \int_\Gamma bd(\eta_n\resmes\tilde{\Gamma}_m) 
    \leq \int_\Gamma bd\eta_n=(\Mod_p(\Gamma_n,b))^{1/p}\xrightarrow{n\to\infty}(\Mod_p(\Gamma,b))^{1/p}. $$
    It follows that $\int_\Gamma b d\tilde{\eta}_m\leq(\Mod_p(\Gamma,b))^{1/p}$.
    \item By Proposition \ref{prop:approximate-rhos-and-etas}, item (2), we know that $\eta_n\in\B(\Gamma_n)$. By Remark \ref{rem:small-lemma} we have that $\eta_n\resmes\tilde{\Gamma}_m\in\B(\Tilde{\Gamma}_m)$. Since $\eta_n\resmes\tilde{\Gamma}_m\xrightarrow{n\to\infty}\tilde{\eta}_m$ weakly in $\M_+(\tilde{\Gamma}_m)$, Lemma \ref{lem:B-closed-wrt-weak-convergence-to-Radon-measure} implies that $\tilde{\eta}_m\in\B(\tilde{\Gamma}_m)$.
\end{enumerate}
\end{proof}

We are now ready to proceed to the second step: construction of a global limit of $(\eta_n)_{n=1}^\infty$. By Lemma \ref{lem:properties-of-local-limits}, item (1), we can define
\begin{equation} \label{eq:def-of-etastar}
    \eta^*(\Sigma ):=\lim_{m\to\infty}\tilde{\eta}_m(\Sigma )
    \quad\text{for all }\Sigma \subset\Gamma\text{ Borel.}
\end{equation}
It follows that $\eta^*$ is a well-defined Borel measure on $\Gamma$. It defines a unique complete measure denoted by the same letter. If $h\in\Bor_+(\Gamma)$, then by the definition of integral it follows that
\begin{equation} \label{eq:integral-wrt-etastar}
    \int_\Gamma hd\eta^*=\lim_{m\to\infty}\int_\Gamma hd\tilde{\eta}_m.
\end{equation}

One might expect $\eta^*\in\B(\Gamma)$, but it turns out that $\eta^*$ does need to be Radon. See Example \ref{ex:non-Radonness} below. However, the following holds:

\begin{lemma} \label{lem:etastar-in-B}
Under the assumptions of Proposition \ref{prop:approximate-rhos-and-etas}, let $\eta^*$ be defined by \eqref{eq:def-of-etastar}. Then $A^\intercal\eta^*$ is absolutely continuous with respect to $\mu$ and $\|\frac{d(A^\intercal\eta^*)}{d\mu}\|_{L^q(\overline{\Om})}\leq 1$.
\end{lemma}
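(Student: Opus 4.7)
The plan is to realize $A^\intercal \eta^*$ as the monotone limit of the Radon measures $A^\intercal \tilde{\eta}_m$, each of which is absolutely continuous with respect to $\mu$ with density bounded in $L^q$. This avoids invoking Lemma~\ref{lem:B-weakly-closed-and-double-limit-in-dual-pairing} on the full space $\Gamma$, which is the natural first instinct but fails because $\eta^*$ need not be Radon (cf.\ Example~\ref{ex:non-Radonness}).

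First, by Lemma~\ref{lem:properties-of-local-limits}(3) each $\tilde{\eta}_m$ lies in $\B(\tilde{\Gamma}_m)$, so the density $f_m := \frac{d(A^\intercal \tilde{\eta}_m)}{d\mu}$ satisfies $f_m \in L^q(\overline{\Om})$ with $\|f_m\|_{L^q(\overline{\Om})} \le 1$. The monotonicity in Lemma~\ref{lem:properties-of-local-limits}(1) propagates to $(A^\intercal \tilde{\eta}_m)(E) \le (A^\intercal \tilde{\eta}_{m+1})(E)$ for every Borel $E \subset \overline{\Om}$, and testing with $E = \{f_m > f_{m+1}\}$ forces $f_m \le f_{m+1}$ $\mu$-a.e. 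After redefining each $f_m$ on a $\mu$-null set I may assume $f_m \nearrow f$ pointwise for some Borel $f\colon \overline{\Om}\to[0,\infty]$, and monotone convergence gives $\|f\|_{L^q(\overline{\Om})} = \lim_m \|f_m\|_{L^q(\overline{\Om})} \le 1$, so in particular $f \in L^q(\overline{\Om})$.

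Next I identify $A^\intercal \eta^*$ with the measure $f\mu$. Fix a Borel set $E\subset\overline{\Om}$ and set $h(\gamma) := \int_\gamma \mathbbm{1}_E \, ds$, which is Borel on $\Gamma$ by Lemma~\ref{lem:regularity-of-various-maps-on-Curv}(3). Applying the identity~\eqref{eq:integral-wrt-etastar} to this $h$ together with monotone convergence on $\overline{\Om}$ yields
\[
(A^\intercal \eta^*)(E) = \int_\Gamma h\,d\eta^* = \lim_{m\to\infty} \int_\Gamma h\,d\tilde{\eta}_m = \lim_{m\to\infty} (A^\intercal \tilde{\eta}_m)(E) = \lim_{m\to\infty} \int_E f_m\,d\mu = \int_E f\,d\mu.
\]
Hence $A^\intercal \eta^* = f\mu$, which simultaneously proves absolute continuity and the bound $\|\tfrac{d(A^\intercal \eta^*)}{d\mu}\|_{L^q(\overline{\Om})} \le 1$. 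The only delicate point is the interchange of the limit in $m$ with the curve-integral over $\gamma$, but this is precisely what formula~\eqref{eq:integral-wrt-etastar} encodes, so the argument is essentially a bookkeeping of monotone limits.
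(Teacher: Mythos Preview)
Your argument is correct: you exploit the monotonicity from Lemma~\ref{lem:properties-of-local-limits}(1) to obtain a pointwise increasing sequence of densities $f_m$, and then identify $A^\intercal\eta^*$ with $f\mu$ via~\eqref{eq:integral-wrt-etastar} and monotone convergence. This is a clean and self-contained route.

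However, your stated reason for avoiding Lemma~\ref{lem:B-weakly-closed-and-double-limit-in-dual-pairing} is a misreading. That lemma was deliberately formulated so that the limiting measure $\eta$ is only required to be a \emph{Borel} measure, not a Radon one; the sentence preceding Lemma~\ref{lem:lsc-of-dual-pairing} even flags this (``a slightly weaker notion of convergence than weak convergence''). The paper's proof simply observes that $(\tilde{\eta}_m)_m$ are finite Radon measures on $\Gamma$ with $\|\tfrac{d(A^\intercal\tilde{\eta}_m)}{d\mu}\|_{L^q}\le 1$ and that~\eqref{eq:integral-wrt-etastar} supplies the required convergence against $h\in C_+(\Gamma)$, so Lemma~\ref{lem:B-weakly-closed-and-double-limit-in-dual-pairing}(1) applies directly. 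Thus both proofs are valid; yours trades the weak-$L^q$ compactness hidden inside Lemma~\ref{lem:B-weakly-closed-and-double-limit-in-dual-pairing} for the explicit monotone structure of the $\tilde{\eta}_m$, which makes the density visible but is not needed to circumvent any Radonness obstruction.
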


\begin{proof}
By the convergence \eqref{eq:integral-wrt-etastar} and Lemma \ref{lem:properties-of-local-limits}, we may apply Lemma \ref{lem:B-weakly-closed-and-double-limit-in-dual-pairing}, item (1), to obtain the claim.
\end{proof}

The following proposition is a counterpart to Proposition \ref{prop:modulus-at-the-limit}. It says that at the limit $n\to\infty$ we achieve "strong duality".

\begin{proposition} \label{prop:duality-at-the-limit}
Under the assumptions of Proposition \ref{prop:approximate-rhos-and-etas}, let $\eta^*$ be defined by \eqref{eq:def-of-etastar}. Then
$$ \int_\Gamma bd\eta^*=(\Mod_p(\Gamma,b))^{1/p}. $$
\end{proposition}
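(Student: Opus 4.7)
The plan is to prove matching upper and lower bounds for $\int_\Gamma b\,d\eta^*$. The upper bound is a direct weak-duality argument relying on Lemma \ref{lem:etastar-in-B}, while the lower bound requires comparing $\int_\Gamma b\,d\eta^*$ to $\lim_{n}\|\rho_n\|_{L^p(\overline{\Om})}=(\Mod_p(\Gamma,b))^{1/p}$ via a careful estimate of how much $b$-mass the $\eta_n$ put outside the compact sets $\tilde{\Gamma}_m$.

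For the upper bound, fix any exactly admissible $\rho\in\A(\Gamma,b)\cap\LB^p_+(\overline{\Om})$. The Fubini-type identity \eqref{eq:transpose-formula-new}, which is valid for any Borel measure on $\Gamma$, together with H\"older's inequality gives
\[
\int_\Gamma b\,d\eta^*\le\int_\Gamma A\rho\,d\eta^*=\int_{\overline{\Om}}\rho\,d(A^\intercal\eta^*)\le\|\rho\|_{L^p(\overline{\Om})}\Big\|\tfrac{d(A^\intercal\eta^*)}{d\mu}\Big\|_{L^q(\overline{\Om})}\le\|\rho\|_{L^p(\overline{\Om})},
\]
where the last step uses Lemma \ref{lem:etastar-in-B}. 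Taking the infimum over $\rho$ yields $\int_\Gamma b\,d\eta^*\le(\Mod_p(\Gamma,b))^{1/p}$.

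For the lower bound, using \eqref{eq:integral-wrt-etastar} together with the weak convergence $\eta_n\resmes\tilde{\Gamma}_m\to\tilde{\eta}_m$ on the compact set $\tilde{\Gamma}_m$ applied to the continuous bounded function $b$, one rewrites
\[
\int_\Gamma b\,d\eta^*=\lim_{m\to\infty}\lim_{n\to\infty}\int_{\Gamma_n\cap\tilde{\Gamma}_m}b\,d\eta_n,
\]
which, combined with Proposition \ref{prop:approximate-rhos-and-etas}(3) and Proposition \ref{prop:modulus-at-the-limit} giving $\int_{\Gamma_n}b\,d\eta_n\to(\Mod_p(\Gamma,b))^{1/p}$, reduces the task to showing $\lim_{m\to\infty}\limsup_{n\to\infty}\int_{\Gamma_n\setminus\tilde{\Gamma}_m}b\,d\eta_n=0$. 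This is the main obstacle. I would decompose $\Gamma_n\setminus\tilde{\Gamma}_m=(\Gamma_n\cap\{\ell>m\})\cup(\Gamma_n\cap\{\operatorname{diam}<1/m\})$ and treat the two pieces separately. For long curves, $b\le 2\|u\|_\infty$ combined with Markov's inequality applied to the uniform bound $\int_{\Gamma_n}\ell\,d\eta_n\le\mu(\overline{\Om})^{1/p}$ (from \eqref{eq:useful-identity-for-dual-measure} with $\rho\equiv 1$ and H\"older) gives a contribution of order $1/m$ uniformly in $n$. For small-diameter curves, the key geometric observation is that since the endpoints lie in $\partial\Om$ and $\operatorname{diam}(\gamma)<1/m$, the entire curve is contained in $N_m:=\{x\in\overline{\Om}:d(x,\partial\Om)<1/m\}$; hence $A^\intercal(\eta_n\resmes\{\operatorname{diam}<1/m\})$ is supported in $N_m$ with density dominated by $(\rho_n/\|\rho_n\|_{L^p(\overline{\Om})})^{p-1}$. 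Using Proposition \ref{prop:approximate-rhos-and-etas}(5) to substitute $b(\gamma)=\int_\gamma\rho_n\,ds$ for $\eta_n$-a.e.\ $\gamma$, and then H\"older's inequality, one estimates this contribution by $\|\rho_n\|_{L^p(N_m)}$. Since $\rho_n\to\rho^*$ in $L^p(\overline{\Om})$ by Lemma \ref{lem:convergence-of-rhon} and $\mu(\partial\Om)=\mu(\bigcap_m N_m)=0$, dominated convergence yields $\limsup_{n\to\infty}\|\rho_n\|_{L^p(N_m)}\le\|\rho^*\|_{L^p(N_m)}\to 0$ as $m\to\infty$, which completes the argument.
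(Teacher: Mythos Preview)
Your proof is correct and follows the same overall strategy as the paper: the upper bound via H\"older/weak duality, and the lower bound by splitting the tail $\Gamma_n\setminus\tilde{\Gamma}_m$ into long curves and small-diameter curves, using that the latter lie in a shrinking neighborhood of $\partial\Om$. The only difference is cosmetic: the paper bounds both tail pieces through weak duality (Lemma~\ref{lem:weak-duality}), reducing to modulus estimates $\Mod_p(\Sigma_m^i,b)$ that it controls via a fixed admissible $h\in\A(\Gamma,b)$ and the test function $\tfrac{1}{m}\mathbbm{1}_{\overline{\Om}}$, whereas you estimate directly via Markov's inequality and the identity $b=A\rho_n$ from item~(5) together with $\rho_n\to\rho^*$.
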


\begin{proof}
By Lemma \ref{lem:properties-of-local-limits}, item (2), and \eqref{eq:integral-wrt-etastar}, we can deduce that
$$ \int_\Gamma bd\eta^*\leq (\Mod_p(\Gamma,b))^{1/p}. $$
In particular $\int_\Gamma bd\eta^*<\infty$.

For the converse inequality, we claim that for any $\epsilon>0$ there exists $M\in\N$ such that $m\geq M$ implies that
$$ (\Mod_p(\Gamma,b))^{1/p}-\epsilon\leq\int_\Gamma bd\tilde{\eta}_m. $$
Let us fix $\epsilon>0$.
Since $b\in C(\Gamma)$ is bounded, by the definition of $\tilde{\eta}_m$ as a weak limit,
\begin{align*}
    \int_\Gamma bd\tilde{\eta}_m
    =\lim_{n\to\infty}\int_\Gamma bd(\eta_n\resmes\tilde{\Gamma}_m)
    =\lim_{n\to\infty}\Big(\int_\Gamma bd\eta_n-\int_{\Gamma\setminus\tilde{\Gamma}_m}bd\eta_n\Big).
\end{align*}
In the last equality we used the fact that the integrals are finite.

By Proposition \ref{prop:approximate-rhos-and-etas}, item (3), and Proposition \ref{prop:modulus-at-the-limit}
$$ \lim_{n\to\infty}\int_\Gamma bd\eta_n
=\lim_{n\to\infty}\|\rho_n\|_{L^p(\overline{\Om})}=\Mod_p(\Gamma,b)^{1/p}. $$
On the other hand,
\begin{align*}
    \int_{\Gamma\setminus\tilde{\Gamma}_m}bd\eta_n
    \leq \int_{\Sigma_m^1}bd\eta_n
    +\int_{\Sigma_m^2}bd\eta_n
\end{align*}
where
$$ \Sigma_m^1:=\{\gamma\in\Gamma:\operatorname{diam}(\gamma)<\tfrac{1}{m}\}
\quad\text{and}\quad
\Sigma_m^2:=\{\gamma\in\Gamma:\ell(\gamma)>m\}. $$
We employ the weak duality, Lemma \ref{lem:weak-duality}, for the generalized modulus problems
$$ \Mod_p(\Sigma_m^1,b)\quad\text{and}\quad
\Mod_p(\Sigma_m^2,b) $$
to obtain that
\begin{align} \label{eq:Gamma-minus-Gammam-estimate}
    \int_{\Gamma\setminus\tilde{\Gamma}_m}bd\eta_n
    \leq \Mod_p(\Sigma_m^1,b)^{1/p}
    +\Mod_p(\Sigma_m^2,b)^{1/p}.
\end{align} 
(Technically speaking, here Lemma \ref{lem:weak-duality} is applied to the measure $\eta_n'\resmes(\Gamma\setminus\tilde{\Gamma}_m)\in\B(\Gamma\setminus\tilde{\Gamma}_m)$ where $\eta_n'$ denotes the zero extension of $\eta_n$ from $\Gamma_n$ to $\Gamma$.)
We estimate the two items on the right hand side of the above inequality separately. Choose $h\in \mathcal{A}(\Gamma,b)$ with $\|h\|_{L^p}<\infty$.
For the first item, let 
\[ \overline{\Omega}_m:=\{x\in\overline{\Omega}:\operatorname{dist}(x,\partial\Om)\leq \tfrac{1}{m}\}. \]
Then all the curves in $\Sigma_m^1$ lie inside $\overline{\Om}_m$, and thus $h\mathbbm{1}_{\overline{\Om}_m}\in\A(\Sigma_m^1,b)$. 
Then
\begin{align} \label{eq:Sigma1-estimate}
    \Mod_p(\Sigma_m^1,b)
    \leq
    \int_{\overline{\Om}_m}h^pd\mu\xrightarrow{m\to\infty}0. 
\end{align}
For the second item, we employ Lemma \ref{lem:Mod-inequality} and the fact that $\frac{1}{m}\mathbbm{1}_{\overline{\Om}}\in\A(\Sigma_m^2,1)$ to obtain
\begin{align} \label{eq:Sigma2-estimate}
    \Mod_p(\Sigma_m^2,b)
    \leq \|b\|_\infty^p\Mod_p(\Sigma_m^2,1)
    \leq \tfrac{1}{m^p}\|b\|_\infty^p\mu(\overline{\Om})^{1/p}.
\end{align}
We combine estimates \eqref{eq:Gamma-minus-Gammam-estimate}, \eqref{eq:Sigma1-estimate} and \eqref{eq:Sigma2-estimate} to find $M\in\N$, independent of $n$, such that $m\geq M$ implies that
$$ \int_{\Gamma\setminus\tilde{\Gamma}_m}bd\eta_n<\epsilon. $$
We conclude that
\begin{align*}
    \int_\Gamma bd\tilde{\eta}_m
    =\lim_{n\to\infty}\Big(\int_\Gamma bd\eta_n-\int_{\Gamma\setminus\tilde{\Gamma}_m}bd\eta_n\Big)
    \geq 
    \lim_{n\to\infty}\int_\Gamma bd\eta_n-\epsilon
    =\Mod_p(\Gamma,b)^{1/p}-\epsilon.
\end{align*}
We let $m\to\infty$ and the proof is finished.
\end{proof}
%------------------------------
\subsection{Key properties of \texorpdfstring{$\rho^*$}{rhoast} and \texorpdfstring{$\eta^*$}{etaast}} \label{ssec:key-propeties-of-rho-and-eta}
This section is devoted to the checking that the joint properties of the functions $\rho_n$ and the measures $\eta_n$ in Proposition \ref{prop:approximate-rhos-and-etas} (items (4)-(7)) are preserved at the limit $n\to\infty$.

\begin{proposition} \label{prop:etastar-picks-gradient-curves}
Under the assumptions of Proposition \ref{prop:approximate-rhos-and-etas}, let $\rho^*\in\LB^p_+(\overline{\Om})$ be the weak minimizer of $\Mod_p(\Gamma,b)$ and let $\eta^*$ be defined by \eqref{eq:def-of-etastar}. Then
\begin{equation} \label{eq:estar-picks-gradient-curves}
    \int_\Gamma\int_\gamma\rho^*ds-b(\gamma)d\eta^*=0.
\end{equation}
Moreover, $\eta^*$ is supported on those curves $\gamma\in\Gamma$ for which $\int_\gamma\rho ds=b(\gamma)$.
\end{proposition}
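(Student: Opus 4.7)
The plan is a sandwich argument. I want to bound the quantity $\int_\Gamma A\rho^*\,d\eta^*$ from above by $\|\rho^*\|_{L^p(\overline{\Om})}$ and from below by $\int_\Gamma b\,d\eta^*$, and then invoke Proposition \ref{prop:duality-at-the-limit} to collapse the chain into equalities.

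For the upper bound, the transpose identity \eqref{eq:transpose-formula-new} (valid for any Borel measure on a Borel subset of $\Curv(X)$, not only for Radon ones) gives
\[ \int_\Gamma A\rho^*\,d\eta^*=\la\rho^*,A^\intercal\eta^*\ra=\int_{\overline{\Om}}\rho^*\,d(A^\intercal\eta^*). \]
By Lemma \ref{lem:etastar-in-B} the measure $A^\intercal\eta^*$ is absolutely continuous with respect to $\mu$ with density bounded by $1$ in $L^q(\overline{\Om})$, so H\"older's inequality yields the bound $\|\rho^*\|_{L^p(\overline{\Om})}$.

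The main obstacle is the lower bound, which amounts to showing that $\rho^*$ is admissible for $\eta^*$-a.e.\ curve rather than merely for $p$-a.e.\ curve. The plan is to consider the Borel exceptional set
\[ E:=\{\gamma\in\Gamma:A\rho^*(\gamma)<b(\gamma)\}, \]
which is Borel by Lemma \ref{lem:regularity-of-various-maps-on-Curv}(3) and continuity of $b$, and satisfies $\Mod_p(E)=0$ by the weak admissibility of $\rho^*$ (Lemma \ref{lem:primal-minimizer} together with Lemma \ref{lem:closure}). To upgrade this to $\eta^*(E)=0$, I would exploit the representation $\eta^*(\Sigma)=\lim_{m\to\infty}\tilde{\eta}_m(\Sigma)$ from \eqref{eq:def-of-etastar}. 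Each $\tilde{\eta}_m$ belongs to $\B(\tilde{\Gamma}_m)$ by Lemma \ref{lem:properties-of-local-limits}(3), so Remark \ref{rem:small-lemma} places the restriction $\tilde{\eta}_m\resmes(E\cap\tilde{\Gamma}_m)$ in $\B(E\cap\tilde{\Gamma}_m)$. Since $E\cap\tilde{\Gamma}_m$ is $p$-exceptional, Corollary \ref{cor:p-exceptional-is-eta-nul} forces $\tilde{\eta}_m(E\cap\tilde{\Gamma}_m)=0$; because $\tilde{\eta}_m$ is concentrated on $\tilde{\Gamma}_m$ by its construction as a weak limit of measures supported there, this in fact yields $\tilde{\eta}_m(E)=0$ for every $m$. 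Letting $m\to\infty$ gives $\eta^*(E)=0$, so $A\rho^*\geq b$ holds $\eta^*$-a.e., whence $\int_\Gamma A\rho^*\,d\eta^*\geq \int_\Gamma b\,d\eta^*$.

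Combining with Proposition \ref{prop:duality-at-the-limit} one then obtains
\[ \|\rho^*\|_{L^p(\overline{\Om})}=\int_\Gamma b\,d\eta^*\leq \int_\Gamma A\rho^*\,d\eta^*\leq \|\rho^*\|_{L^p(\overline{\Om})}, \]
so all three quantities coincide, which is precisely \eqref{eq:estar-picks-gradient-curves}. Since the integrand $A\rho^*-b$ is nonnegative $\eta^*$-a.e.\ by the previous paragraph, the vanishing of its integral forces $A\rho^*=b$ for $\eta^*$-a.e.\ $\gamma$, giving the concentration statement.
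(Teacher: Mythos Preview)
Your proof is correct, and the lower-bound half (showing $\eta^*(E)=0$ via the local limits $\tilde\eta_m$, Remark~\ref{rem:small-lemma}, and Corollary~\ref{cor:p-exceptional-is-eta-nul}) is essentially identical to the paper's.

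The upper bound, however, is obtained differently. The paper does \emph{not} appeal to Proposition~\ref{prop:duality-at-the-limit} or H\"older at this stage; instead it goes back to the approximating data and uses the lower-semicontinuity Lemma~\ref{lem:B-weakly-closed-and-double-limit-in-dual-pairing}(2) together with item~(5) of Proposition~\ref{prop:approximate-rhos-and-etas} (namely $A\rho_n=b$ $\eta_n$-a.e.) to deduce $\la A\rho^*,\tilde\eta_m\ra\le\la b,\tilde\eta_m\ra$ for every $m$, and then passes to the limit in $m$. Your route is cleaner: since Lemma~\ref{lem:etastar-in-B} and Proposition~\ref{prop:duality-at-the-limit} are already available, the single H\"older estimate $\la A\rho^*,\eta^*\ra=\la\rho^*,A^\intercal\eta^*\ra\le\|\rho^*\|_{L^p(\overline\Om)}=\int_\Gamma b\,d\eta^*$ closes the sandwich without revisiting the sequences $(\rho_n,\eta_n)$. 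In effect you have folded the H\"older step that the paper postpones to the proof of Proposition~\ref{prop:density-of-transpose-of-A} into the present argument, which shortens the overall exposition.
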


\begin{proof}
Since $\rho^*$ is a weak minimizer, we know that $\int_\gamma\rho^*ds\geq b(\gamma)$ for all $\gamma\in\Gamma\setminus E$, where $E\subset\Gamma$ is such that $\Mod_p(E)=0$. 
Note that we can choose $E$ to be Borel by Lemma \ref{lem:regularity-of-various-maps-on-Curv} and the continuity of $b$.
It follows that $\eta(E)=0$ for all $\eta\in\B(E)$ by Corollary \ref{cor:p-exceptional-is-eta-nul}.
By Remark \ref{rem:small-lemma} $\tilde{\eta}_m\resmes E\in \B(E)$, and thus $\tilde{\eta}_m(E)=0$ for all $m=1,2,\ldots$. It follows that $\eta^*(E)=0$ and thus
\begin{equation} \label{eq:estar-picks-gradient-curves-inequality}
    \int_\Gamma\int_\gamma\rho^*ds-b(\gamma)d\eta^*
    =\int_{\Gamma\setminus E}\int_\gamma\rho^*ds-b(\gamma)d\eta^*\geq0.
\end{equation}

For the converse inequality, by Lemma \ref{lem:local-limits} we know that $(\eta_n\resmes\tilde{\Gamma}_m)_{n=1}^\infty$ converges weakly to $\tilde{\eta}_m$ for all $m\in\N$
and $$ \Big\|\frac{d(A^\intercal(\eta_n\resmes\tilde{\Gamma}_m))}{d\mu}\Big\|_{L^q(\overline{\Om}}
\leq\Big\|\frac{d(A^\intercal\eta_n)}{d\mu}\Big\|_{L^q(\overline{\Om}}\leq 1 $$
for all $m\in\N$. (See Remark \ref{rem:restriction-and-AT}.)
Moreover, $\rho_n\xrightarrow{n\to\infty}\rho^*$ in $L^p(\overline{\Om})$, by Lemma \ref{lem:convergence-of-rhon}. 
Thus we can employ Lemma \ref{lem:B-weakly-closed-and-double-limit-in-dual-pairing}, item (2), to obtain that
\begin{align*}
    \la A\rho^*,\tilde{\eta}_m\ra
    \leq \liminf_{n\to\infty}\la A\rho_n,\eta_n\resmes\tilde{\Gamma}_m
    \ra
\end{align*}
holds for all $m\in\N$. 
By Proposition \ref{prop:approximate-rhos-and-etas}, item (5), and the weak convergence of $(\eta_n\resmes\tilde{\Gamma}_m)_{n=1}^\infty$ we conclude that
\begin{align*}
    \la A\rho^*,\tilde{\eta}_m\ra
    \leq \liminf_{n\to\infty}\la A\rho_n,\eta_n\resmes\tilde{\Gamma}_m\ra
    = \liminf_{n\to\infty}\la b,\eta_n\resmes\tilde{\Gamma}_m\ra 
    =\la b,\tilde{\eta}_m\ra
\end{align*}
for all $m\in\N$. Now we may let $m\to\infty$ to obtain
$$ \la A\rho^*,\eta^*\ra\leq \la b,\eta^*\ra. $$
This is the converse inequality to \eqref{eq:estar-picks-gradient-curves-inequality}.

Finally, since $\int_\gamma\rho^*ds\geq b(\gamma)$ for $\eta^*$-a.e. $\gamma\in\Gamma$ by the above argument we can use \eqref{eq:estar-picks-gradient-curves} to conclude that $\int_\gamma\rho^*ds=b(\gamma)$ for $\eta^*$-a.e. $\gamma\in\Gamma$.
\end{proof}

\begin{proposition} \label{prop:density-of-transpose-of-A}
Under the same assumptions as in Proposition \ref{prop:approximate-rhos-and-etas}, let $\rho^*\in\LB^p_+(\overline{\Om})$ be the weak minimizer of $\Mod_p(\Gamma,b)$ and let $\eta^*$ be defined by \eqref{eq:def-of-etastar}. Then $A^\intercal\eta^*$ is absolutely continuous with respect to $\mu$ and
\begin{equation}
    \frac{d(A^\intercal\eta^*)}{d\mu}=\Big(\frac{\rho^*}{\|\rho^*\|_{L^p(\overline{\Om})}}\Big)^{p-1}.
\end{equation}
In particular, for every $\rho\in\Bor_+(\overline{\Om})$
\begin{equation} \label{eq:integration-formula}
    \int_\Gamma\int_\gamma\rho dsd\eta^*=\int_{\overline{\Om}}\rho\Big(\frac{\rho^*}{\|\rho^*\|_{L^p(\overline{\Om})}}\Big)^{p-1}d\mu.
\end{equation}
\end{proposition}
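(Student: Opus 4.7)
The plan is to combine three ingredients already in place: the strong-duality identity from Proposition \ref{prop:duality-at-the-limit}, the complementary-slackness identity from Proposition \ref{prop:etastar-picks-gradient-curves}, and the sub-unit $L^q$-bound on the density from Lemma \ref{lem:etastar-in-B}. These three facts will assemble into a chain of (in)equalities that must collapse to equality, and then the equality case of H\"older's inequality will pin down the density.

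Concretely, let $h := \frac{d(A^\intercal \eta^*)}{d\mu}\in L^q(\overline{\Om})$, which is well-defined by Lemma \ref{lem:etastar-in-B} and satisfies $\|h\|_{L^q(\overline{\Om})}\le 1$. I would then write
\begin{align*}
\|\rho^*\|_{L^p(\overline{\Om})}
&= \int_\Gamma b\, d\eta^*
 && \text{(Proposition \ref{prop:duality-at-the-limit})} \\
&= \int_\Gamma \int_\gamma \rho^*\, ds\, d\eta^*
 && \text{(Proposition \ref{prop:etastar-picks-gradient-curves})} \\
&= \int_{\overline{\Om}} \rho^*\, h\, d\mu
 && \text{(formula \eqref{eq:transpose-formula-new})} \\
&\le \|\rho^*\|_{L^p(\overline{\Om})}\,\|h\|_{L^q(\overline{\Om})}
 && \text{(H\"older)} \\
&\le \|\rho^*\|_{L^p(\overline{\Om})}
 && \text{(Lemma \ref{lem:etastar-in-B}).}
\end{align*}
Since $u$ is non-constant on $\Om$ we have $\|\rho^*\|_{L^p(\overline{\Om})}>0$ (by Proposition \ref{prop:equivalence} and Proposition \ref{prop:modulus-at-the-limit}), so the chain forces equality throughout. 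In particular $\|h\|_{L^q(\overline{\Om})}=1$ and the H\"older inequality above is saturated.

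From the equality case of H\"older's inequality, together with $\|\rho^*\|_{L^p}>0$ and $\|h\|_{L^q}>0$, there exists $c>0$ such that $h = c\,(\rho^*)^{p-1}$ $\mu$-a.e.\ on $\overline{\Om}$. Plugging this into the normalization $\|h\|_{L^q(\overline{\Om})}=1$ gives
\[
1 = c^q\int_{\overline{\Om}}(\rho^*)^{(p-1)q}\,d\mu = c^q\,\|\rho^*\|_{L^p(\overline{\Om})}^{\,p},
\]
hence $c=\|\rho^*\|_{L^p(\overline{\Om})}^{-(p-1)}$ and therefore
\[
h = \left(\frac{\rho^*}{\|\rho^*\|_{L^p(\overline{\Om})}}\right)^{p-1} \quad \mu\text{-a.e.},
\]
which is precisely the stated density formula. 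The integration formula \eqref{eq:integration-formula} is then an immediate consequence: for any $\rho\in\Bor_+(\overline{\Om})$, apply \eqref{eq:transpose-formula-new} and substitute the density just obtained.

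The only subtle point I anticipate is justifying the third line of the chain (the identification of $\int_\Gamma\int_\gamma \rho^*\,ds\,d\eta^*$ with $\int_{\overline{\Om}}\rho^*\,h\,d\mu$) when $A^\intercal\eta^*$ may a priori have been defined on a non-Radon measure; but this is exactly the content of Lemma \ref{lem:etastar-in-B} combined with \eqref{eq:transpose-formula-new}, since $\rho^*\in\LB^p_+(\overline{\Om})\subset\Bor_+(\overline{\Om})$ and $A^\intercal\eta^*\ll\mu$ with density $h\in L^q$. No further regularization of $\rho^*$ is needed.
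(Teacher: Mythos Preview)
Your proof is correct and follows essentially the same approach as the paper's own proof: both assemble the chain $\|\rho^*\|_{L^p}=\int_\Gamma b\,d\eta^*=\int_\Gamma A\rho^*\,d\eta^*=\int_{\overline{\Om}}\rho^* h\,d\mu\le\|\rho^*\|_{L^p}$ using Propositions \ref{prop:duality-at-the-limit} and \ref{prop:etastar-picks-gradient-curves}, formula \eqref{eq:transpose-formula-new}, and Lemma \ref{lem:etastar-in-B}, and then invoke the equality case of H\"older. Your version is slightly more explicit in extracting the constant $c=\|\rho^*\|_{L^p}^{-(p-1)}$, but the argument is the same.
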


\begin{proof}
We employ Proposition \ref{prop:modulus-at-the-limit}, Proposition \ref{prop:duality-at-the-limit}, Proposition \ref{prop:etastar-picks-gradient-curves}, \eqref{eq:transpose-formula-new} and Lemma \ref{lem:etastar-in-B} to obtain
\begin{align*}
    \|\rho^*\|_{L^p(\overline{\Om})}=(\Mod_p(\Gamma,b))^{1/p}
    =\int_\Gamma bd\eta^*
    =\int_\Gamma A\rho^*d\eta^*
    =\int_{\overline{\Om}}\rho^*d(A^\intercal\eta^*)
    =\int_{\overline{\Om}}\rho^*\frac{d(A^\intercal\eta^*)}{d\mu}d\mu
    \leq \|\rho^*\|_{L^p(\overline{\Om})}.
\end{align*}
In the rightmost inequality we employed Hölder's inequality and Lemma \ref{lem:etastar-in-B}.
We conclude that we have equality in the Hölder's inequality, from which the claim follows.
\end{proof}

Denote by length of a curve $\gamma$ within a Borel set $E$ by $\ell(\gamma|_E)=\int_\gamma \mathbbm{1}_E ds.$

\begin{corollary} \label{cor:gradient-curves-ae}
Under the assumptions of Proposition \ref{prop:approximate-rhos-and-etas}, let $E\subset\overline{\Om}$ be Borel such that $\mu(E)>0$ and $g_u>0$ in $E$. Then there exists a gradient curve $\gamma$ of $u$ that genuinely passes through $E$, namely $\ell(\gamma|_E)>0$.
\end{corollary}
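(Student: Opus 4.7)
The plan is to feed the indicator function $\rho = \mathbbm{1}_E$ into the integration formula \eqref{eq:integration-formula} from Proposition \ref{prop:density-of-transpose-of-A} and use that $\eta^*$-a.e. curve is a gradient curve by Proposition \ref{prop:etastar-picks-gradient-curves}.

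More concretely, first I would recall that $\rho^* = g_u$ $\mu$-a.e. in $\Om$ by Proposition \ref{prop:equivalence}, and that $\mu(\partial\Om) = 0$. Plugging $\rho = \mathbbm{1}_E$ into \eqref{eq:integration-formula} and noting that $\int_\gamma \mathbbm{1}_E\, ds = \ell(\gamma|_E)$ yields
\[
\int_\Gamma \ell(\gamma|_E)\, d\eta^* = \int_E \Big(\frac{g_u}{\|g_u\|_{L^p(\Om)}}\Big)^{p-1} d\mu.
\]
Since $g_u > 0$ on $E$ and $\mu(E) > 0$, the right-hand side is strictly positive. Therefore the Borel set
\[
\Gamma_E := \{\gamma \in \Gamma : \ell(\gamma|_E) > 0\}
\]
has strictly positive $\eta^*$-measure. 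Measurability of $\Gamma_E$ follows from Lemma \ref{lem:regularity-of-various-maps-on-Curv}(3) applied to $\rho = \mathbbm{1}_E$.

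Next I would restrict attention to the subset of $\Gamma_E$ consisting of genuine gradient curves. By Proposition \ref{prop:etastar-picks-gradient-curves}, the set $\Gamma_g := \{\gamma \in \Gamma : \int_\gamma \rho^* ds = b(\gamma)\}$ is $\eta^*$-conull. Moreover, the set of curves along which $g_u$ fails to be an upper gradient for $u$ is $p$-exceptional (since $g_u$ is a $p$-weak upper gradient), hence $\eta^*$-null by Corollary \ref{cor:p-exceptional-is-eta-nul} together with Remark \ref{rem:small-lemma}. Intersecting $\Gamma_E$ with these two $\eta^*$-conull sets yields a set of positive $\eta^*$-measure; any curve in this intersection meets the definition of a gradient curve of $u$ (Definition \ref{def:gradient-curve}) and satisfies $\ell(\gamma|_E) > 0$.

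I expect no serious obstacle: the real work is already in Propositions \ref{prop:equivalence}, \ref{prop:etastar-picks-gradient-curves}, and \ref{prop:density-of-transpose-of-A}, and this corollary is essentially bookkeeping that combines them. The only minor point to be careful about is matching Definition \ref{def:gradient-curve} literally (namely that $g_u$ be an upper gradient along $\gamma$, not merely a $p$-weak upper gradient), but this is handled by discarding the $p$-exceptional set via Corollary \ref{cor:p-exceptional-is-eta-nul}.
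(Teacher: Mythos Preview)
Your proposal is correct and follows essentially the same route as the paper: plug $\rho=\mathbbm{1}_E$ into \eqref{eq:integration-formula} to see that $\eta^*(\Gamma_E)>0$, then invoke Proposition~\ref{prop:etastar-picks-gradient-curves} to pick a gradient curve in $\Gamma_E$. If anything you are more careful than the paper's own proof, since you explicitly discard the $p$-exceptional set where $g_u$ fails to be an upper gradient along $\gamma$ (as required by Definition~\ref{def:gradient-curve}), a point the paper leaves implicit.
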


\begin{proof}
Arguing by contradiction, suppose that there exists $E\subset\overline{\Om}$ such that $\mu(E)>0$ and no gradient curve passes through $E$.
We set $\rho=\mathbbm{1}_E$ in \eqref{eq:integration-formula} to obtain that 
$$ \int_\Gamma\ell(\gamma|_E)d\eta^*=\int_E\Big(\frac{\rho^*}{\|\rho^*\|_{L^p(\overline{\Om})}}\Big)^{p-1}>0. $$
It follows that there exists $\Sigma\subset\Gamma$ such that $\eta^*(\Sigma)>0$ and $\ell(\gamma|_E)>0$ for all $\gamma\in\Sigma$.
\end{proof}
%------------------------------
\subsection{Conclusion} \label{ssec:conclusion}

In this section we give the proof of the main results, Theorem \ref{thm:intro-main} and Corollary \ref{cor:gradient-curves-ae}. For the reader's convenience, we restate these here:

\begin{theorem}
Let $X=(X,d,\mu)$ be a complete, doubling $p$-Poincar\'{e} space, $1<p<\infty$.
Suppose that $\Om'\subset X$ is a domain and $u\in N^{1,p}_{\loc}(\Om')$ a $p$-harmonic function. 
Let us fix a bounded subdomain $\Om\subset\subset\Om'$ such that $\mu(\partial\Om)=0$ and $u$ is nonconstant in $\Om$. 
There exists a Borel measure $\eta^*$ on
\begin{equation} \label{eq:conclusion-Gamma}
    \Gamma:=\{\gamma\in\Curv(\overline{\Om}):\gamma(0),\gamma(\ell_\gamma)\in\partial\Om\}
\end{equation}
such that the following hold:
\begin{enumerate}
    \item The $p$-energy of $u$ is given by integration over the gradient curves of $u$ with respect to $\eta^*$, 
    $$\|g_u\|_{L^p(\Om)}=\int_\Gamma|u(\gamma(0))-u(\gamma(\ell_\gamma))|d\eta^*. $$
    \item Measure $\eta^*$ is supported on the gradient curves on $u$, that is, on those curves $\gamma\in\Gamma$ for which
    $$ |u(\gamma(0))-u(\gamma(\ell_\gamma))|=\int_\gamma g_uds. $$
    \item For any Borel set $E\subset\overline{\Om}$, the following identity holds:
    \begin{equation} \label{eq:conclusion-identity}
        \int_\Gamma\int_\gamma\mathbbm{1}_Edsd\eta^*=\int_{E}\Big(\frac{g_u}{\|g_u\|_{L^p(\Om)}}\Big)^{p-1}d\mu.
    \end{equation}
    In particular, for $\mu$-a.e. $x\in\{y\in\Om:g_u(y)\neq 0\}$ there exists a gradient curve $\gamma$ of $u$ that passes through $x$.
\end{enumerate}
\end{theorem}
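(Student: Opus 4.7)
The plan is to assemble the machinery built in Sections \ref{ssec:convergence-of-rhos}--\ref{ssec:key-propeties-of-rho-and-eta}. With $\Gamma$ as in \eqref{eq:conclusion-Gamma} and $b(\gamma):=|u(\gamma(0))-u(\gamma(\ell_\gamma))|$, Proposition \ref{prop:equivalence} identifies the unique weak minimizer $\rho^*$ of $\Mod_p(\Gamma,b)$ with $g_u$ $\mu$-a.e. on $\Om$ (the distinction between $\Om$ and $\overline{\Om}$ being harmless since $\mu(\partial\Om)=0$). The candidate measure is the $\eta^*$ defined in \eqref{eq:def-of-etastar}, constructed by exhausting $\Gamma$ first through $\Gamma_n=\{\gamma:\ell(\gamma)\leq n,\ b(\gamma)\geq 1/n\}$, to which Proposition \ref{prop:approximate-rhos-and-etas} supplies approximating pairs $(\rho_n,\eta_n)$, and then through the compact sets $\tilde\Gamma_m=\{\gamma:1/m\leq\operatorname{diam}(\gamma),\ \ell(\gamma)\leq m\}$, on which the restrictions $\eta_n\resmes\tilde\Gamma_m$ admit weak subsequential limits $\tilde\eta_m$.

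First I would invoke Proposition \ref{prop:approximate-rhos-and-etas} on each $\Gamma_n$ (valid because $\Gamma_n$ is compact by Lemma \ref{lem:curves-with-bdd-length-compact} and $\inf_{\Gamma_n}b\geq 1/n>0$) and then Lemma \ref{lem:convergence-of-rhon} to upgrade to $\rho_n\to\rho^*$ strongly in $L^p(\overline{\Om})$. Lemma \ref{lem:local-limits} extracts, after a diagonal subsequence, weak limits $\tilde\eta_m$ on each $\tilde\Gamma_m$; Lemma \ref{lem:properties-of-local-limits} provides the monotonicity needed for the pointwise limit \eqref{eq:def-of-etastar} to define a $\sigma$-finite Borel measure $\eta^*$ on $\Gamma$, and Lemma \ref{lem:etastar-in-B} shows $A^\intercal\eta^*\ll\mu$ with $L^q$ density of norm at most $1$.

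To conclude item (1), I would combine Proposition \ref{prop:modulus-at-the-limit} with Proposition \ref{prop:duality-at-the-limit} to get
\begin{equation*}
\|g_u\|_{L^p(\Om)}=\big(\Mod_p(\Gamma,b)\big)^{1/p}=\int_\Gamma b\,d\eta^*.
\end{equation*}
Item (2) is read off from Proposition \ref{prop:etastar-picks-gradient-curves}, together with the observation that $\int_\gamma \rho^*\,ds=\int_\gamma g_u\,ds$ for $\eta^*$-a.e.\ $\gamma$ (the exceptional set lies in a $p$-exceptional family of curves, which is $\eta^*$-negligible by Corollary \ref{cor:p-exceptional-is-eta-nul} applied to each $\tilde\eta_m$ via Remark \ref{rem:small-lemma}). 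Item (3) is exactly Proposition \ref{prop:density-of-transpose-of-A} applied with $\rho=\mathbbm{1}_E$; the final ``in particular'' statement follows from Corollary \ref{cor:gradient-curves-ae} applied to $E_k:=\{g_u>1/k\}\cap\Om$ and an exhaustion $k\to\infty$.

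The main obstruction is the failure of global tightness of $(\eta_n)$ on $\Gamma$: the approximating measures may charge curves that either collapse to diameter zero or escape to infinite length, which blocks a direct application of Prokhorov's theorem. The two-step limiting procedure (local weak limits on each $\tilde\Gamma_m$, then a monotone global limit) is precisely what handles this, at the cost of $\eta^*$ being possibly non-finite (cf.\ Example \ref{ex:non-Radonness}). Preserving the duality identity across this passage is the delicate part, and it rests on the estimate inside the proof of Proposition \ref{prop:duality-at-the-limit} that controls the escaping mass $\int_{\Sigma_m^1\cup\Sigma_m^2}b\,d\eta_n$ via Lemma \ref{lem:weak-duality} and the elementary modulus bounds \eqref{eq:Sigma1-estimate}--\eqref{eq:Sigma2-estimate}.
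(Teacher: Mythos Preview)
Your proposal is correct and follows essentially the same route as the paper: assemble the measure $\eta^*$ from \eqref{eq:def-of-etastar}, then read off (1) from Propositions \ref{prop:modulus-at-the-limit} and \ref{prop:duality-at-the-limit}, (2) from Proposition \ref{prop:etastar-picks-gradient-curves} (the paper additionally cites Remark \ref{rmk:gradientcurve} to confirm these are gradient curves in the sense of Definition \ref{def:gradient-curve}), and (3) from Proposition \ref{prop:density-of-transpose-of-A} together with Corollary \ref{cor:gradient-curves-ae}. Your extra care in item (2)---justifying $\int_\gamma\rho^*\,ds=\int_\gamma g_u\,ds$ for $\eta^*$-a.e.\ $\gamma$ via $p$-exceptionality---is a point the paper leaves implicit, and your final paragraph correctly isolates the tightness obstruction that motivates the two-step limiting construction.
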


\begin{proof}
Let us define $\eta^*$ by \eqref{eq:def-of-etastar}. Then Proposition \ref{prop:modulus-at-the-limit} and Proposition \ref{prop:duality-at-the-limit} yield that
$$ \|g_u\|_{L^p(\Om)}=(\Mod_p(\Gamma,b))^{1/p}=\int_\Gamma|u(\gamma(0))-u(\gamma(\ell_\gamma))|d\eta^*. $$
Proposition \ref{prop:etastar-picks-gradient-curves} gives that 
$$ \int_\gamma g_uds=|u(\gamma(0))-u(\gamma(\ell_\gamma))| $$
for $\eta^*$-a.e. curve $\gamma\in\Gamma$. That is, by Remark \ref{rmk:gradientcurve} $\eta^*$ is supported on gradient curves on $u$. 

Identity \eqref{eq:conclusion-identity} follows from Proposition \ref{prop:density-of-transpose-of-A}. Finally, for a later contradiction, suppose that there exists a Borel set $E\subset\{y\in\Om:g_u(y)\neq 0\}$ such that $\mu(E)>0$ and for all $x\in E$ no gradient curve passes through $x$.
Then Corollary \ref{cor:gradient-curves-ae} gives the existence of a gradient curve $\gamma$ of $u$ such that $\ell(\gamma|_E)>0$. In particular there exists a point $x\in E$ such that $\gamma(t)=x$ for some $t\in[0,\ell_\gamma]$, a contradiction.
\end{proof}

\begin{figure}
\begin{tikzpicture}[scale=5]
    % Draw the unit square (0,1)^2
    \draw[thick] (0,0) rectangle (1,1);
    \foreach \n in {1,2,3,4,5,6} {
        \pgfmathsetmacro{\lefts}{1/(2^\n)-1/(2^(\n+2))}
        \pgfmathsetmacro{\rights}{1/(2^\n)}
        \draw[fill=black!100, draw=none] (\lefts, 0) rectangle (\rights, 0.5);
    } 
    % Add labels
    \node[below] at (0,0) {$0$};
    \node[below] at (1,0) {$1$};
    \node[left] at (0,1) {$1$};
    \node[left] at (0,0.5) {$\frac{1}{2}$};
\end{tikzpicture}
\caption{The domain $\Omega$ that is used in Example \ref{ex:non-Radonness}.}
\label{fig:nonradon}
\end{figure}
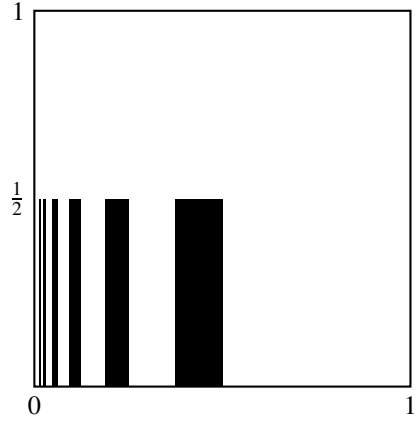

\begin{example} \label{ex:non-Radonness}
A subtle detail in Theorem \ref{thm:intro-main} is that the measure $\eta^*$ need not be finite. We equip $\R^2$ with the standard Euclidean metric and Lebesgue measure $\lambda$. 
Consider the domain 
\[\Omega=(0,1)^2\setminus \left(\bigcup_{n=1}^\infty \left[\frac{1}{2^n}-\frac{1}{2^{n+2}},\frac{1}{2^n}\right]\times\left[0,\frac{1}{2}\right]\right) \]
that is shown in Figure \ref{fig:nonradon} and the $p$-harmonic function $u\in N^{1,p}_{\rm loc}(\R^2)$ given by $u(x,y)=x$. The measure $\eta^*$ is supported on horizontal curves in $\Omega$. However, the curves must be contained within $\overline{\Omega}$ and for every $y\in (0,\frac{1}{2})$ the horizontal line $\R\times \{y\}$ intersects $\Omega$ in infinitely many components. 
We check that the measures of the horizontal curves restricted to these components add up to infinity.

Let $E_1=(\frac{1}{2},1)\times (0,\frac{1}{2})\subset\Om$ be the rightemost component of $\Omega \cap (0,1)\times(0,1/2)$ and let $\Gamma_1$ denote the horizontal curves in $E_1$. Theorem \ref{thm:intro-main}, item (3), implies that
\begin{equation} \label{eq:non-Radonness-formula}
    \int_\Gamma\ell(\gamma|_{E_1})d\eta^*=\lambda(E_1)\lambda(\Om)^{\frac{1-p}{p}}.
\end{equation}
Since $\int_\Gamma\ell(\gamma|_{E_1})d\eta^*=\frac{1}{2}\eta^*(\Gamma_1)$ and $\lambda(E_1)=\frac{1}{4}$, we conclude that $\eta^*(\Gamma_1)=\frac{1}{2}\lambda(\Om)^{\frac{1-p}{p}}$.

Similarly, for $n=2,3,\ldots$ let $E_n=(\frac{1}{2^n},\frac{1}{2^{n-1}}-\frac{1}{2^{n+1}})\times(0,\frac{1}{2})\subset\Om$ be the $n$'th component from right of $\Omega \cap (0,1)\times(0,1/2)$ and let $\Gamma_n$ denote the horizontal curves in $E_n$. Formula \eqref{eq:non-Radonness-formula} for $E_n$ in place of $E_1$ implies that
$$ \eta^*(\Gamma_n)=2^{n+1}\lambda(E_n)\lambda(\Om)^{\frac{1-p}{p}}=\frac{1}{2}\lambda(\Om)^{\frac{1-p}{p}}\quad\text{for all }n=2,3,\ldots. $$
We conclude that $\eta^*(\Gamma)\geq\sum_{n=1}^\infty\eta^*(\Gamma_n)=\infty$.
\end{example}
%------------------------------
\section{Failure of the Sheaf property}\label{sec:failuresheaf}

The insights gained from studying gradient curves of $p$-harmonic functions lead us to examples exhibiting the failure of the sheaf property. The examples are normed vector spaces, and we first give a simple lemma that describes the minimal upper gradient in that setting.

Let us equip $\Rn$ with norm $\|\cdot\|$ and Lebesgue measure $\lambda$. A function $f\colon\Omega\to \R$, for $\Omega\subset \R^n$ open, is called ($L$-)Lipschitz if for all $x,y\in \Omega$ we have $|f(x)-f(y)|\leq L\|x-y\|$.
It follows that $f$ is Lipschitz with respect to the Euclidean distance too and thus, by Rademacher's theorem, differentiable almost everywhere.
Thus its differential $df_x=\sum_{i=1}^n \partial_{x_i} f(x) dx^i$ is defined almost everywhere, and it associates to a.e. $x\in \Omega$ an element $df_x$ in the dual space $(\R^n)^*$. Let $\|\cdot\|^*$ be the dual norm of $\|\cdot\|$ given by $\|\omega\|^*=\sup_{\|v\|=1} \omega(v)$ for $\omega\in (\R^n)^*$. Using these, we can compute the minimal upper gradients of Lipschitz functions in finite dimensional normed spaces.

\begin{lemma}\label{lem:minuppergradeucl} Let $\R^n$ be equipped with a metric $d$ given by a norm $\|\cdot\|$ and the Lebesgue measure, and let $f:\Omega \to \R$ be Lipschitz for some domain $\Omega \subset \R^n$. Then 
\begin{equation}\label{eq:differentialminupgrad}
g_f = \|df\|^* \text{ a.e. in } \Omega.
\end{equation}
\end{lemma}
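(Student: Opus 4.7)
The plan is to establish both $g_f \le \|df\|^*$ and $g_f \ge \|df\|^*$ for $\lambda$-a.e.\ point of $\Omega$, which together give \eqref{eq:differentialminupgrad}.

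For the upper bound I would show that $\|df\|^*$ is itself a $p$-weak upper gradient of $f$, so that $g_f \le \|df\|^*$ follows from minimality. Let $E \subset \Omega$ denote the $\lambda$-null set on which $f$ fails to be differentiable (Rademacher's theorem). A first observation is that the family $\Gamma_E := \{\gamma \in \Curv(\Omega) : \int_\gamma \mathbbm{1}_E\, ds > 0\}$ is $p$-exceptional: choose open neighborhoods $U_k \supset E$ with $\lambda(U_k) \to 0$, note $\mathbbm{1}_{U_k} \to 0$ in $L^p(\Omega)$, and apply Fuglede's Lemma \ref{lem:Fuglede}. Consequently, for any $\gamma \colon [0,\ell] \to \Omega$ outside $\Gamma_E$, $f$ is differentiable at $\gamma(t)$ for a.e.\ $t$, and since $\gamma$ is Lipschitz and arc-length parametrized (so $\|\gamma'(t)\| = 1$ a.e.), the chain rule for Lipschitz maps gives $(f \circ \gamma)'(t) = df_{\gamma(t)}(\gamma'(t))$ a.e. Integrating,
\[
|f(\gamma(\ell)) - f(\gamma(0))| \le \int_0^\ell \bigl|df_{\gamma(t)}(\gamma'(t))\bigr|\, dt \le \int_0^\ell \|df_{\gamma(t)}\|^*\, dt = \int_\gamma \|df\|^*\, ds,
\]
which is precisely weak admissibility.

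For the lower bound, the issue is that the unit vector attaining $\|df_y\|^*$ depends on $y$. I would resolve this by fixing a countable dense subset $\{w_k\}_{k \in \N}$ of the unit sphere of $(\R^n, \|\cdot\|)$ and proving $|df_y(w_k)| \le g_f(y)$ separately for each $k$, at $\lambda$-a.e.\ $y \in \Omega$. Fix $k$. A Fubini argument transverse to $w_k$, combined with a Fuglede-type tube argument applied to straight-segment families in direction $w_k$, shows that for $\lambda$-a.e.\ $y$ the map $s \mapsto f(y + sw_k)$ is absolutely continuous on its domain and
\[
|f(y + \tau w_k) - f(y)| \le \int_0^\tau g_f(y + sw_k)\, ds
\]
for all sufficiently small $\tau > 0$. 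At any such $y$ which is also a differentiability point of $f$ and a one-dimensional Lebesgue point of $s \mapsto g_f(y + sw_k)$ at $s = 0$, dividing by $\tau$ and letting $\tau \to 0^+$ yields $|df_y(w_k)| \le g_f(y)$. Intersecting the resulting countable collection of null sets and invoking continuity of $w \mapsto df_y(w)$ together with density of $\{w_k\}$ in the unit sphere gives $\|df_y\|^* = \sup_k |df_y(w_k)| \le g_f(y)$ a.e., as desired.

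The main technical step will be the transverse Fubini/ACL statement in the second paragraph, namely that for each fixed direction $w_k$ the one-dimensional upper gradient inequality holds on $\lambda^{n-1}$-a.e.\ line parallel to $w_k$. This is the non-Euclidean analogue of the standard ACL characterization of $W^{1,p}$, and I would verify it by the same Fuglede-based tube construction used for $\Gamma_E$, now applied to straight-segment families with tube-type admissible densities concentrated in a thin slab around a bad set of lines. All remaining ingredients—Rademacher's theorem, the chain rule for Lipschitz compositions, one-dimensional Lebesgue differentiation, and the minimality of $g_f$—are standard.
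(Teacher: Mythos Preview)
Your proposal is correct and follows essentially the same approach as the paper: show $\|df\|^*$ is a $p$-weak upper gradient via the chain rule along $p$-a.e.\ curve, and obtain the reverse inequality by testing against line segments in a countable dense set of directions and applying Lebesgue differentiation. You are more explicit than the paper about the Fubini/ACL step needed to ensure the upper gradient inequality holds along a.e.\ straight segment in a fixed direction, which the paper simply asserts; otherwise the arguments coincide.
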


\begin{proof}
First, if $E\subset \Omega$ is the set of points where $f$ is differentiable, then for $p$-a.e. curve $\gamma \in \Curv(\Om)$ we have $\int_\gamma 1_{\Omega \setminus E} ds=0$, cf. \cite{Bjorn2011}*{Lemma 1.42}. For such curves, one can use the chain rule to find $(f\circ\gamma)'(t) = df_{\gamma(t)}(\gamma'(t))$ for a.e. $t\in [0,\ell_\gamma]$. 
Thus
\[
|f(\gamma(\ell_\gamma))-f(\gamma(0))|\leq \int_0^{\ell_\gamma}  |(f\circ\gamma)'(t)| dt = \int_0^{\ell_\gamma}  |df_{\gamma(t)}(\gamma'(t))| ds \leq  \int_{\gamma}  \|df\|^* ds,
\]
where we used $\|\gamma'(t)\|=\lim_{h\to 0} \frac{\|\gamma(t+h)-\gamma(t)\|}{h}=|\dot{\gamma}|(t)=1$ for a.e. $t\in(0,\ell_\gamma)$, which holds since $\gamma$ is parametrized by unit speed; see 
\cite{Ambrosio2005}*{Theorem 1.1.2 and Lemma 1.1.4}. Thus, $\|df\|^*$ is a $p$-weak upper gradient for $f$. 

Next, fix $v\in \R^n$ with $\|v\|=1$. Then, for a.e.  $x\in \R^n$ , and a.e. $a,b\in \R$ with $a<b$ we have by the upper gradient property \eqref{eq:upper-gradient}
\[
|f(x+bv)-f(x+av)|\leq \int_a^b g_f(x+tv) dt.
\]
Dividing by $b-a$ and by Lebesgue differentiation one gets
\[
|df_x(v)|\leq g_f(x)\quad\text{for a.e. }x\in\Om.
\] 
Taking a supremum over a countably dense collection of $v$ with $\|v\|=1$, we get $\|df\|^*\leq g_f$. Thus, \eqref{eq:differentialminupgrad} follows.
\end{proof}

The following theorem gives the failure of the sheaf property, i.e. Theorem \ref{thm:failureofsheaf}. Recall, that in the introduction we already gave a heuristic idea for constructing the counter-example. 

\begin{theorem}\label{thm:failureofsheaf} There exists doubling metric measure space $X$ which satisfies a $p$-Poincar\'e inequality and domains $\Omega_1,\Omega_2\subset X$ and a function $u\in N^{1,p}(\Omega_1\cup\Omega_2)$ which is a $p$-harmonic energy minimizer in $\Omega_1$ and in $\Omega_2$, but is not a $p$-harmonic energy  minimizer in $\Omega_1\cup \Omega_2$. 
\end{theorem}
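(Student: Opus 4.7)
I take $X = \R^2$ equipped with the $\ell_1$-norm $|(x,y)|_1 = |x| + |y|$ and two-dimensional Lebesgue measure. Since $|\cdot|_1$ is biLipschitz to the Euclidean norm on $\R^2$, the space $X$ is complete, Ahlfors $2$-regular (hence doubling), and satisfies a $p$-Poincar\'e inequality for every $1 < p < \infty$. By Lemma~\ref{lem:minuppergradeucl}, the minimal $p$-weak upper gradient of a Lipschitz function is the dual norm of its differential, namely $g_f = \max(|\partial_x f|, |\partial_y f|)$. The lack of strict convexity of this expression in the partial derivatives is what allows the failure of the sheaf property.

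Set $\Om_1 = (-1, 1) \times (0, 1)$ and $\Om_2 = (0, 1) \times (-1, 1)$, so $\Om_1 \cap \Om_2 = (0, 1)^2$ and $\Om_1 \cup \Om_2$ is L-shaped, as in the heuristic in the introduction. I define $u \colon \Om_1 \cup \Om_2 \to \R$ by a continuous, piecewise affine formula on the regions illustrated in Figure~\ref{fig:regionsphi}, chosen so that $g_u$ is constant on each piece. The boundary values of $u$ on $\partial \Om_1$ will be designed so that Jensen's inequality applied to vertical boundary-to-boundary segments $\gamma_x(t) = (x, t)$, $t \in [0, 1]$, gives $\int_0^1 g_{\tilde u}^p(x, t)\, dt \geq |u(x, 1) - u(x, 0)|^p$ for every competitor $\tilde u$ with the same boundary values on $\partial \Om_1$; integrating over $x \in (-1, 1)$ will show $\int_{\Om_1} g_{\tilde u}^p\, d\lambda \geq \int_{\Om_1} g_u^p\, d\lambda$, hence $u$ is $p$-harmonic in $\Om_1$. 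The analogous horizontal-slice argument gives $p$-harmonicity in $\Om_2$.

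The main step is to exhibit an explicit competitor $\tilde u$ on $\Om_1 \cup \Om_2$ agreeing with $u$ on $\partial(\Om_1 \cup \Om_2)$ but with strictly smaller $p$-energy. The key observation is that the two internal edges $\{0\} \times (0, 1)$ and $(0, 1) \times \{0\}$ lie on $\partial \Om_1$ and $\partial \Om_2$ but not on $\partial(\Om_1 \cup \Om_2)$, so they are no longer constrained in the union problem; the competitor $\tilde u$ may therefore differ from $u$ along these edges. I will define $\tilde u$ as a piecewise affine modification of $u$ near the inner corner $(0, 0)$ of the L-shape, adjusted so that $g_{\tilde u} < g_u$ on a set of positive measure while preserving the correct boundary values on $\partial(\Om_1 \cup \Om_2)$. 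A direct piecewise integration of $g_{\tilde u}^p - g_u^p$ on the small region where the two functions differ will yield the strict inequality $\int g_{\tilde u}^p\, d\lambda < \int g_u^p\, d\lambda$ on $\Om_1 \cup \Om_2$, proving that $u$ is not $p$-harmonic on the union. The main technical obstacle is writing down the appropriate formula for $\tilde u$; as the introduction emphasizes, once the formula is in hand the energy comparison itself is ``quite simple and direct''.
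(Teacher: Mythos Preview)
Your plan is correct and matches the paper's proof in all essential respects: the same space $(\R^2,\ell_1,\lambda)$, the same L-shaped pair of rectangles $\Omega_1,\Omega_2$, the same Jensen/H\"older argument along vertical (resp.\ horizontal) boundary-to-boundary segments to establish minimality in $\Omega_1$ (resp.\ $\Omega_2$), and an explicit piecewise-affine competitor to break minimality in the union. One small correction: Figure~\ref{fig:regionsphi} in the paper depicts the perturbation $\phi$, not $u$; the function $u$ itself is the simpler three-piece formula $u=y$ on $\{x<0\}$, $u=x$ on $\{y<0\}$, $u=x+y$ on $\{x,y\ge 0\}$, so that $g_u\equiv 1$ globally and $|u(x,1)-u(x,0)|=|u(1,y)-u(0,y)|=1$, which is exactly what makes your slice argument go through cleanly.
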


\begin{proof}
Consider the plane $\R^2$ equipped with the $\ell_1$-metric $d((x_1,y_1),(x_2,y_2))=|x_1-x_2|+|y_1-y_2|$
and the Lebesgue measure $\lambda$.
Then $(\R^2,d,\lambda)$ is a doubling $p$-Poincare space, since it is bi-Lipschitz to the standard Euclidean plane; see e.g. \cite{Heinonen2001}. The dual norm of the $\ell_1$-norm is the $\ell_\infty$-norm, and thus by Lemma \ref{lem:minuppergradeucl} we get $g_f = \max(|\partial_x f|,|\partial_y f|)$ for any Lipschitz function $f:\Omega\to \R$ and any domain $\Omega \subset \R^n$.

Define the following domains
\[
\Omega_1 = (-1,1)\times (0,1)\quad\text{and}\quad\Omega_2 = (0,1)\times(-1,1)
\]
and the following Lipschitz function $u:\Omega_1\cup \Omega_2 \to \R$
\[
u(x,y)=\begin{cases} y & x< 0 \\ x & y<0 \\ x+y & x,y \geq 0 \end{cases}.
\]
The function $u$ is bounded, $1$-Lipschitz, and 
\[
du = \begin{cases} dy & x< 0 \\ dx & y<0 \\ dx+dy & x,y \geq 0 \end{cases},
\]
and thus by \eqref{eq:differentialminupgrad}, we have $g_u=1$ in $\Omega_1\cup \Omega_2.$ In particular, $u\in N^{1,p}(\Omega_1\cup \Omega_2)$.

We first show that the function $u$ is a $p$-harmonic energy  minimizer in $\Omega_i$ for $i=1,2$. Let $\varphi \in N_0^{1,p}(\Omega_i)$. By definition, we can extend the function to $\varphi\in N^{1,p}(\overline{\Omega}_i)$ with $\varphi = 0$ on $\partial \Omega_i$. Consider the case $i=1$ first. For a.e. $x\in (-1,1)$, we have by Hölder's inequality and absolute continuity that:
\begin{equation}\label{eq:minimality}
\int_0^1 |\partial_y (u+\varphi)(x,y)|^p dy \geq \left|\int_0^1 \partial_y (u+\varphi)(x,y)\right|^p = |(u+\varphi)(x,1)-(u+\varphi)(x,0)|^p=1.
\end{equation}
Then, integrating this in $x$ yields
\begin{align*}
\int_{\Omega_1} g_{u+\varphi}^p d\lambda &\geq \int_{\Omega_1} |\partial_y (u+\varphi)|^p d\lambda  & \text{ Since upper gradient is greater than directional derivative. }\\
&= \int_{-1}^1 \int_0^1 |\partial_y (u+\varphi)|^p dxdy & \text{ Fubini } \\
&\geq \int_{-1}^1 \int_0^1 g_u^p dy dx& \eqref{eq:minimality} \text{ and } g_u=1\\
&= \int_{\Omega_1} g_{u}^p d\lambda.
\end{align*}
The argument for $i=2$ is symmetric. For a.e. $y\in(-1,1)$ fixed, we have by Hölder's inequality that:
\begin{equation}\label{eq:minimality2}
\int_0^1 |\partial_x (u+\varphi)(x,y)|^p dx \geq \left| \int_0^1 \partial_x (u+\varphi)(x,y)\right|^p = |(u+\varphi)(1,y)-(u+\varphi)(0,y)|^p=1.
\end{equation}
Then, integrating this in $y$ yields
\begin{align*}
\int_{\Omega_2} g_{u+\varphi}^p d\lambda &\geq \int_{\Omega_2} |\partial_x (u+\varphi)|^p d\lambda  & \text{ Since upper gradient is greater than directional derivative. }\\
&= \int_{-1}^1 \int_0^1 |\partial_x (u+\varphi)|^p dydx & \text{ Fubini } \\
&\geq \int_{-1}^1 \int_0^1 g_u^p dx dy & \eqref{eq:minimality2} \text{ and } g_u=1\\
&= \int_{\Omega_2} g_{u}^p d\lambda.
\end{align*}

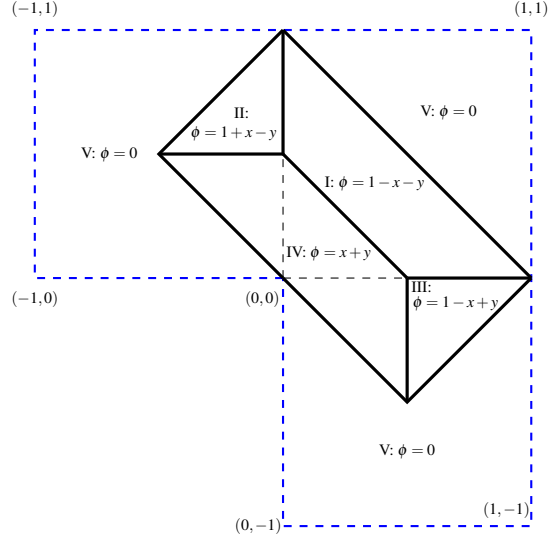
\begin{figure}[!ht]
\begin{tikzpicture}[scale=0.55]
\draw[blue,thick,dashed] (6,6) -- (0,6) -- (0,12) -- (6,12)-- (12,12) -- (12,6) -- (12,0) -- (6,0) -- (6,6); %outer boudnaries

\node[scale=.6] at (7.1,6.6) (blabel) {IV: $\phi=x+y$};
\node[scale=.6] at (8.2,8.3) (blabel) {I: $\phi=1-x-y$};
\node[scale=.6] at (10,10) (blabel) {V: $\phi=0$};
\node[scale=.6] at (9,1.8) (blabel) {V: $\phi=0$};
\node[scale=.6] at (1.8,9) (blabel) {V: $\phi=0$};
\node[scale=.6] at (5,10) (blabel) {II:};
\node[scale=.6] at (4.8,9.5) (blabel) {$\phi=1+x-y$};
\node[scale=.6] at (9.35,5.78) (blabel) {III:}; 
\node[scale=.6] at (10.16,5.38) (blabel) {$\phi=1-x+y$}; %Parts

\node[scale=.6] at (12,12.5) (blabel) {$(1,1)$}; 
\node[scale=.6] at (0,5.5) (blabel) {$(-1,0)$}; 
\node[scale=.6] at (0,12.5) (blabel) {$(-1,1)$}; 
\node[scale=.6] at (5.4,0) (blabel) {$(0,-1)$}; 
\node[scale=.6] at (11.4,0.4) (blabel) {$(1,-1)$}; 
\node[scale=.6] at (5.5,5.5) (blabel) {$(0,0)$}; %Points

\draw[thin, dashed](6,6) -- (6,9);
\draw[thin, dashed](6,6) -- (9,6); % Inner lines

\draw[black, very thick](6,6) -- (3,9) -- (6,12);
\draw[black, very thick](6,9) -- (6,12);
\draw[black, very thick](9,6) -- (12,6);
\draw[black, very thick] (6,6) -- (9,3) -- (12,6);
\draw[black, very thick] (3,9) -- (6,9);
\draw[black, very thick] (9,3) -- (9,6);
\draw[black, very thick] (6,9) -- (9,6);
\draw[black, very thick] (12,6) -- (6,12); % Inner regions
\end{tikzpicture}
\caption{Figure of the definition of $\phi$.}
\label{fig:regionsphi}
\end{figure}

Finally, we show that $u$ is not a $p$-harmonic energy minimizer in $\Omega_1\cup \Omega_2$. Define Lipschitz function \\
$\phi:\Omega_1\cup \Omega_2 \to \R$ with $\phi\in N^{1,p}_0(\Omega_1 \cup \Omega_2)$ piece-wise as follows:
\[
\phi(x,y) = \begin{cases} 
1-x-y & 1/2 < x+y < 1, x,y>0 \text{, case I }\\
1+x-y & x<0, y>1/2, y-x < 1 \text{, case II }\\
1-x+y & y<0, x>1/2, x-y < 1 \text{, case III }\\
x+y & 0 < x+y < 1/2, y,x<1/2 \text{, case IV }\\
0 & \text{otherwise, case V} \\
\end{cases}
\]
The regions $I,II,III,IV,V$ are defined in the cases of $\phi$, and the regions are drawn out in Figure \ref{fig:regionsphi}.  The function $\phi$ is $1$-Lipschitz in the metric $\ell_1$. 

Now, let's consider $\epsilon\in (-1/2,1/2)$ and $u_\epsilon = u+\epsilon \phi$. Then $du_\epsilon = du + \epsilon d\phi$. Therefore, by dividing into cases we compute
\[
d u_\epsilon = \begin{cases} 
(1-\epsilon)(dx+dy) & 1/2 < x+y < 1, x,y>0 \text{, case I }\\
(1-\epsilon)dy + \epsilon dx & x<0, y>1/2, y-x < 1 \text{, case II }\\
(1-\epsilon)dx + \epsilon dy & y<0, x>1/2, x-y < 1 \text{, case III }\\
(1+\epsilon)(dx+dy) & 0 < x+y < 1/2, y,x<1/2 \text{, case IV }\\
dx + dy & \text{otherwise, case V} \\
\end{cases}
\]
Thus, combining these with \eqref{eq:differentialminupgrad} we have
\[
g_{u_\epsilon} = \begin{cases} 
1-\epsilon & 1/2 < x+y < 1, x,y>0 \text{, case I }\\
1-\epsilon & x<0, y>1/2, y-x < 1 \text{, case II }\\
1-\epsilon  & y<0, x>1/2, x-y < 1 \text{, case III }\\
1+\epsilon  & 0 < x+y < 1/2, 0<y,x<1/2 \text{, case IV }\\
1 & \text{otherwise, case V}.
\end{cases}
\]
By itegrating this, we get
\[
\int_{\Omega_1\cup \Omega_2} g_{u_\epsilon}^pd\lambda 
= 2 + \frac{3}{8}(1+\epsilon)^p + \frac{5}{8}(1-\epsilon)^p.
\]
Differentiate this in $\epsilon$, and we get
\[
\frac{d}{d\epsilon}\left.\int_{\Omega_1\cup \Omega_2} g_{u_\epsilon}^p d\lambda \right|_{\epsilon = 0} = \frac{3p}{8}-\frac{5p}{8}< 0.
\]
Thus, for some small $\epsilon>0$, we have
\[
\int_{\Omega_1\cup \Omega_2} g_{u_\epsilon}^p d\lambda < \int_{\Omega_1\cup \Omega_2} g_{u}^p d\lambda. 
\]
Thus, $u$ is not a $p$-harmonic function in $\Omega_1\cup \Omega_2$.
\end{proof}
%------------------------------
\bibliographystyle{amsplain}
\bibliography{bibliography}
%------------------------------
\end{document}